\theoremstyle{plain}
\newtheorem{lem}{Lemma}[subsection]
\newtheorem{thm}{Theorem}[subsection]
\newtheorem{cor}{Corollary}[subsection]
\newtheorem{prop}{Proposition}[subsection]
\theoremstyle{definition}
\newtheorem{defi}{Definition}[subsection]
\newtheorem{ex}{Example}[subsection]
\theoremstyle{remark}
\newtheorem*{rem}{Remark}
\newcommand{\RR}{\mathbb{R}}
\newcommand{\PHB}{\mbox{PHB}}
\newcommand{\HolBis}{\CH(\CF)^{AS}}
\newcommand{\CF}{\mathcal{F}}
\newcommand{\CH}{\mathcal{H}}
\newcommand{\CP}{\mathcal{P}}
\newcommand{\into}{\hookrightarrow}
\newcommand{\til}{\widetilde}
\newcommand{\CI}{C^\infty}
\newcommand{\SR}{\underline{\mathbb{R}}}
\newcommand{\SX}{\mathfrak{X}}
\newcommand{\de}{\partial}
\newcommand{\fto}{\rightarrow}
\newcommand{\bt}{\mathbf{t}}
\newcommand{\bs}{\mathbf{s}}
\newcommand{\Cinf}{{C^\infty_M}}
\newcommand{\s}{\mathbf{s}}
\renewcommand{\t}{\mathbf{t}}
\DeclareMathOperator{\Diff}{Diff}
\DeclareMathOperator{\Hol}{Hol}
\DeclareMathOperator{\Mon}{\Pi_1}
\DeclareMathOperator{\ev}{ev}
\newcommand{\HT}{\mathrm{HT}}
\renewcommand{\d}[1]{\ensuremath{\operatorname{d}\!{#1}}}
\title{Integration of singular foliations via paths}
\begin{document}

\title{Integration of singular foliations via paths}

\author{Alfonso Garmendia, Universität Potsdam and Joel Villatoro, KU Leuven}

\address{%
KU Leuven Departement Wiskunde,
Celestijnenlaan 200B, 
3001 Leuven, Belgium
and
Institut für Mathematik, 
Haus 9, Karl-Liebknecht-Straße 24, 
14476 Potsdam, Germany}


\begin{abstract}
We give a new construction of the holonomy and fundamental groupoids of a singular foliation. In contrast with the existing construction of Androulidakis and Skandalis, our method proceeds by taking a quotient of an infinite dimensional space of paths. This strategy is a direct extension of the classical construction for regular foliations and mirrors the integration of Lie algebroids via paths (per Crainic and Fernandes). In this way, we obtain a characterization of the holonomy and fundamental groupoids of a singular foliation that more clearly reflects the homotopic character of these invariants. As an application of our work, we prove that the constructions of the fundamental and holonomy groupoid of a foliation have functorial properties.
\end{abstract}

\maketitle

\tableofcontents

\section*{Introduction}
A regular foliation, is an integrable subbundle $D \into TM$ of the tangent bundle of a smooth manifold. The Frobenius theorem tells us that this is equivalent to the partition of $M$ into immersed submanifolds which satisfies a certain local normal form.

Regular foliations appear frequently in differential geometry. For example: the fibers of a submersion or the orbits of some particularly nice Lie group actions. However, it is often the case that one must consider situations where the action is not nice or the map is not quite a submersion. This requires the consideration of distributions which may change in rank and therefore a suitable generalization of regular foliations.

Along the lines of previous authors~\cite{AndrSk}\cite{DEB1}\cite{Hermann}\cite{SylvainArticle}\cite{Stefan}\cite{RoyThesis}, we define a foliation to be a sheaf of Lie algebras which is also a submodule of the sheaf of vector fields satisfying some finiteness conditions (Definition~\ref{defi:sing.fol}).

Singular foliations are closely related to the study of Lie algebroids. From a given Lie algebroid, one can construct a singular foliation by considering the sheaf of vector fields which arise as the image of the anchor map. On the other hand, given a leaf of a singular foliation, one can define a transitive Lie algebroid~\cite{AZ1}.

Two of the most useful tools for studying regular foliations are the associated fundamental groupoids and holonomy groupoids. These groupoids are invariants of the foliation which reflect the global properties of the leaves. When the foliation is regular it can be regarded as a Lie algebroid and both of these groupoids constitute integrations of this Lie algebroid structure.

For the singular case, Androulidakis and Skandalis~\cite{AndrSk} constructed the holonomy groupoid. Although it corresponds with the standard object in the regular case, the construction bears little resemblance to the classical method for defining this invariant. The goal of this paper is to provide an alternative construction of the fundamental groupoid and holonomy groupoid of a singular foliation. Our method admits clear analogies with the classical constructions as well as with the integration of Lie algebroids per Crainic and Fernandes~\cite{CrFeLie}.

Another path-based approach to constructing the fundamental groupoid of a singular foliation appears in a paper~\cite{SylvainArticle} by Laurent-Gengoux, Lavau and Strobl and later in \cite{LR19} by Laurent-Gengoux and Ryvkin. In ~\cite{SylvainArticle} work, we see that if the foliation arises as the image of a vector bundle, then one can imitate the Crainic and Fernandes construction to obtain a topological groupoid which integrates each leaf-wise algebroid.

In this paper we are faced with two distinct kinds of generalizations of manifolds: infinite dimensional mapping spaces and singular quotients. It is therefore convenient to choose a formalism of generalized manifold which encompasses both possibilities. The language of diffeology is suitable for this task without requiring much significant technical development. Therefore, this is the framework we will use for describing our manifold-like objects.
\subsection*{Constructing the holonomy groupoid}
In Section~\ref{subsec:holonomy}, we explain our construction of the holonomy groupoid. We define an $\CF$-path to be a time-dependent element $X(t)$ of $\CF$ together with an integral curve $\gamma(t)$. Using the flow of $X$, one can obtain the germ of a diffeomorphism from a transversal $T_0$ through $\gamma(0)$ to $T_1$ through $\gamma(1)$.
However, there is ambiguity in this construction. So the `holonomy' is actually the germ of an element of $\Diff(T_0,T_1)$ up to an equivalence relation. This equivalence relation is precisely the one used by Androlidakis and Zambon in \cite{AZ2}. The holonomy groupoid is therefore obtained by identifying $\CF$-paths which are holonomic.

In Section~\ref{section:compareas} we show that our construction is diffeomorphic (as a diffeological groupoid) to the previous construction by Androulidakis and Skandalis. Although the proof involves diffeological spaces and is somewhat involved, it does not require more than a basic understanding of the subject of diffeology.

\subsection*{Constructing the fundamental groupoid}
In Section~\ref{subsec:homotopy} we construct the fundamental groupoid of a singular foliation. Taking inspiration from Crainic-Fernandes style integration, the key notion is that of $\CF$-homotopy of $\CF$-paths. The fundamental groupoid of $\CF$ is therefore defined to be the set of $\CF$-paths up to $\CF$-homotopy. It turns out that two $\CF$-paths are holonomic whenever they are homotopic. This way, one can view the holonomy groupoid as a quotient of the fundamental groupoid, which correctly reflects the classical relationship between the two objects.

As we mentioned before, given a leaf $L$ of a singular foliation one can naturally define an integrable Lie algebroid $A_L \to L$. In Section~\ref{section:crainicfernandes}, we show that (leaf-wise), the fundamental groupoid of a singular foliation is the same as the source simply connected integration of this algebroid. This clarifies the close connection between the notion of $\CF$-homotopy and $A$-homotopy.

\subsection*{Functoriality}
In the last section, we show that the construction of the fundamental groupoid is functorial in the same way that the path integration of a Lie algebroid is functorial. However, in order to state the result, we are forced to develop a suitably general notion of morphism of foliated manifolds. To do this, we use the notion of a comorphism of sheaves of modules previously discussed by Higgins and Mackenzie~\cite{hm1993}.
\subsection*{Acknowledgements}
The authors would like to thank Marco Zambon for his advice throughout this project. We would also like to thank Rui Fernandes for his suggestions and comments. We would also like to thank Dan Christensen for his helpful remarks on an earlier draft. This work was partially funded by FWO EOS project G0H4518N and FWO Project G083118N.
\section{Foliations}
Let $\Cinf$ denote the sheaf of smooth functions on a manifold $M$.
In this text, a singular foliation on $M$ is a well-behaved $\Cinf$-submodule $\CF$ of the sheaf of vector fields $\SX$. The two key conditions are that $\CF$ should be locally finitely generated and involutive. Throughout, $M$ will denote a fixed smooth manifold.
\begin{defi}[Locally Finitely Generated]\label{defi:finitely.gen}
A sheaf of $C^\infty_M$-modules $\CF$ is said to be \emph{ locally finitely generated} if for all $x \in M$, there exists an open set $x \in U \subseteq M$, a natural number $n \ge 0$ and a surjective morphism of $C_M^\infty(U)$-modules:
\[ C_M^\infty(U)^n \twoheadrightarrow \CF(U) \]
\end{defi}

This definition is equivalent to the existence of a finite set of sections $s_1 , \ldots , s_N \in \CF(U)$ which generate $\CF(U)$ over $C^\infty_M(U)$.
\begin{defi}[Involutive]\label{defi:involutive}
A $C^\infty_M$ submodule $\CF \into \SX$ of the sheaf of vector fields on $M$ is said to be \emph{ involutive} if for each open set $U$, the image of the inclusion $\CF(U) \into \SX(U)$ is a Lie subalgebra.
\end{defi}
\begin{defi}[Foliation]\label{defi:sing.fol}
A \emph{(singular) foliation} on a manifold $M$ is a locally finitely generated and involutive $C^\infty$-submodule $\mathcal{F} \into \SX$. A manifold equipped with such an object is called a \emph{ foliated manifold}.
\end{defi}
By Hermann~\cite{Hermann}, Stefan~\cite{Stefan} and Sussman~\cite{Sussmann}, singular foliations partition $M$ into immersed submanifolds (called leaves) which are generated by the flows of vector fields inside of the foliation. The singular distribution obtained from the tangent spaces to the leaves is called the characteristic distribution of $\CF$.

Although we do not assume that $\CF$ is projective (i.e. the sheaf of sections of some vector bundle), we can still define something which resembles a vector bundle:
\begin{defi}\label{defi:localization}
Let $\CF$ be a $\Cinf$-module and let $x \in M$ be a point. We denote the ideal of smooth functions which vanish at $x \in M$ by $I_x$. The \emph{fiber of $\CF$ at $x$} is:
\[ A(\CF)_x := \frac{\CF(M)}{I_x \CF(M)} \]
Since $C^\infty_M(M) / I_x$ is canonically isomorphic to $\mathbb{R}$, the fiber of $\CF$ at $x$ is a real vector space. The \emph{fiberspace of $\CF$} is the disjoint union of the fibers:
\[ A(\CF) := \bigsqcup\limits_{x \in M} A(\CF)_x \]
\end{defi}
Many properties of $\CF$ can be determined from $A(\CF)$. For instance, $\CF$ is finitely generated in a neighborhood of $x \in M$ if and only if $A(\CF)_x$ is a finite dimensional vector space. Furthermore, if $\CF$ is locally finitely generated, then $\CF$ is projective if and only if the dimension of $A(\CF)_x$ is independent of $x \in M$.
\section{Diffeology}
Since we are interested in considering time dependent sections of a singular foliation, it is important to make sense of the notion of a smooth map into $\CF(U)$. We will approach this problem by using the language of diffeology.
\subsection{Basics}
We will give a brief overview of the main concepts from diffeology that we require. Diffeological spaces were first introduces by Souriau in~\cite{SouriauDiff1}\cite{SouriauDiff2}. For a more thorough treatment, we refer the reader to \cite{Diffeology}.
\begin{defi}[Diffeological Space]\label{defi:diffeo.sp}
A \emph{ diffeology} on a set $X$ is an assignment to any $d \in \mathbb{N}$ and a non-empty open $U \subseteq \mathbb{R}^d$, a subset $\mathcal{D}(U) \subseteq X^U$ of elements called \emph{ plots} such that:
\begin{enumerate}[(a)]
\item if $f \in X^U$ is constant, then $f \in \mathcal{D}(U)$;
\item for all $V \subseteq \mathbb{R}^n$, $f \in \mathcal{D}(U)$ and $g \in C^\infty(V,U)$, then $f \circ g \in \mathcal{D}(V)$;
\item if $f \in X^U$ and there exists an open cover $\{ U_i \}_{i \in I}$ of $U$ such that $f|_{U_i} \in \mathcal{D}(U_i)$ for all $i \in I$, then $f \in \mathcal{D}(U)$.
\end{enumerate}
A \emph{ diffeological space} is a pair $(X,\mathcal{D})$ where $X$ is a set and $\mathcal{D}$ is a diffeology on $X$. Given diffeological spaces $(X, \mathcal{D})$ and $(Y, \mathcal{D}')$, a function $f \colon X \to Y$ is called \emph{ smooth} if $f \circ \mathcal{D} \subseteq \mathcal{D'}$.
The set of smooth functions from $X$ to $Y$ will be denoted $C^\infty(X,Y)$.
\end{defi}
Any smooth manifold is canonically a diffeological space and a function between smooth manifolds is smooth in the diffeological sense if and only if it is smooth in the traditional sense.
\begin{defi}[\cite{Diffeology}(I.46)]
Suppose $X$ and $Y$ are diffeological spaces and $f \colon X \to Y$ is smooth. Then $f$ is called a \emph{ subduction} if for all plots $\phi \colon U \to Y$ and points \( p \in U \), there exists an open neighborhood $p \in U' \subseteq U$ and a plot $\til \phi \colon U' \to X$ such that:
\begin{equation}\label{eqn:lift}
f \circ \til \phi = \phi|_{U'}
\end{equation}
A \emph{ local subduction} is the case where the lift can arranged to pass through a specified point. That is, for all $x \in X$, given a plot $\phi \colon U \to Y$ such that $\phi(v) = f(x)$ for some $v \in U$, there exists an open neighborhood $U' \subseteq U$ of $v$ and a lift $\til \phi \colon U' \to X$ such that $\til \phi(v) = x$ and Equation~(\ref{eqn:lift}) holds.
\end{defi}
If $X$ and $Y$ are smooth manifolds, then $f \colon X \to Y$ is a local subduction if and only if it is a submersion. Now let us look at a few examples of diffeological spaces.
\begin{ex}[Intersection~\cite{Diffeology}(I.22)]
Suppose $\{ \mathcal{D}_i \}_{i \in I}$ is a set of diffeological structures on $X$ indexed by $I$. Then one can define the intersection diffeology
\[ \left( \bigcap\limits_{i \in I} \mathcal{D}_i \right)(U) := \bigcap\limits_{i \in I} \mathcal{D}_i(U) \]
\end{ex}
\begin{ex}[Products~\cite{Diffeology}(I.55)]
Suppose \( X \) and \( Y \) are diffeological spaces. Then the product diffeology on \( X \times Y \) is defined to be the intersection of all diffeologies on \( X \times Y \) such that for all plots \( \phi \colon U_1 \to X \) and \( \psi \colon U_2 \to Y \) the function \( \phi \times \psi \colon U_1 \times U_2 \to X \times Y \) is a plot.
\end{ex}
\begin{ex}[Mapping Spaces\cite{Diffeology}(I.13)]
Let $X$ and $Y$ be diffeological spaces. Then $C^\infty(X,Y)$ is also a diffeological space. A function $f \colon U \to C^\infty(X,Y)$ is a plot if and only if the associated function $U\times X\fto Y$ given by $(u, x) \mapsto f(u)(x)$ is smooth.
\end{ex}
\begin{ex}[Quotient Diffeology\cite{Diffeology}(I.50)]
Suppose $X$ is a diffeological space and $\sim$ is an equivalence relation on $X$. Let $\pi \colon X \to X/ \mathtt{\sim}$ denote the quotient function. Then the quotient diffeology on $X/\mathtt{\sim}$ is the intersection of all diffeologies on $X/\mathtt{\sim}$ for which $\pi \colon X \to X/\mathtt{\sim}$ is diffeological morphism.

This is equivalent to saying that the diffeology on $X/\mathtt{\sim}$ is the unique one which makes $\pi$ a subduction.
\end{ex}
\begin{ex}[Subset Diffeology\cite{Diffeology}(I.33)]
Suppose $Y \subseteq X$ and $X$ is equipped with a diffeology and let $\iota \colon Y \to X$ be the inclusion map. Then we can define a plot on $Y$ to be any function $f \colon \mathbb{R}^n \to Y$ such that $\iota \circ f$ is a plot on $X$.

If $X$ is a smooth manifold and $Y \into X$ is an immersed submanifold. Then the diffeology coming from the smooth structure on $Y$ agrees with the subset diffeology on $Y$ if and only if $Y$ is an initial submanifold.
\end{ex}
\begin{ex}[Sections of a vector bundle]
Suppose $E \to M$ is a smooth vector bundle over a manifold $M$. Let $\Gamma_E$ denote the sheaf of smooth sections of $E$. We have an inclusion $\Gamma_E(M) \subseteq C^\infty(M,E)$. Therefore, $\Gamma_E(M)$ inherits a subset diffeology from the mapping space diffeology on $C^\infty(M,E)$. From now on, we will take the diffeology on the sections of a vector bundle to be implicit.
\end{ex}
\subsection{The coefficient diffeology}
We wish to generalize the last example to sections of a $\Cinf$-module. The basis of the definition will be that we want the smallest diffeology with the following properties: It agrees with the diffeology on sections of a vector bundle, it makes module homomorphisms smooth, and is compatible with the restriction and gluing sheaf operations. To our knowledge this diffeology has not appeared before in the literature.
\begin{defi}[Coefficient Diffeology]\label{defi:coef.diffeo}
Let $\CF$ be a $\Cinf$-module. Given a subset of some Euclidean space $U$, a function $\phi \colon U \to \CF(M)$ is a plot if and only if it locally factors through the sections of a vector bundle.

In other words: $\phi$ is a plot, if and only if for all $u_0 \in U$ and $p_0 \in M$, there exist open neighborhoods $U' \subseteq U$ and $V \subseteq M$ of $u_0$ and $p_0$ together with:
\begin{itemize}
\item a vector bundle $E \to V$,
\item a morphism of $\Cinf$-modules $T \colon \Gamma_E(V) \to \CF(V)$,
\item and a smooth function $\til \phi \colon U' \to \Gamma_E(V)$
\end{itemize} 
such that the following diagram commutes:
\[\begin{tikzcd}[column sep = large]
& \Gamma_E(V) \arrow{dr}{T}  \\
U' \arrow[dashed]{ur}{\exists \til \phi} \arrow{r}{\phi|_{U'}} & \CF(M) \arrow{r} & \CF(V)
\end{tikzcd}\]
This is called the \emph{ coefficient diffeology} on $\CF(M)$.
\end{defi}
The next lemma gives another characterization of the coefficient diffeology.
\begin{lem}
Suppose $\CF$ is a sheaf of $\Cinf$-modules. Let $\CF(M)$ be equipped with the coefficient diffeology. Then, $\phi \colon U \to \CF(M)$ is a plot if and only if for all $u_0 \in U$ and $p_0 \in M$ there are open neighborhoods $U' \subseteq U$ of $u_0$ and $V \subseteq M$ of $p_0$, together with
\begin{itemize}
\item a finite number of functions:
\[ c^1, \ldots , c^n \in C^\infty(U' \times V, \mathbb{R}) \]
\item and elements $X_1 , \ldots , X_n \in \CF(V)$
\end{itemize} 
such that:
\begin{equation}\label{eqn:coeffdif}
\forall u \in U' , p \in V \quad \phi(u)_p =\sum\limits_{i=1}^n c^i(u,p) (X_i)_p
\end{equation}
\end{lem}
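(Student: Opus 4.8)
The plan is to prove the equivalence by passing to a local trivialization of the vector bundle appearing in Definition~\ref{defi:coef.diffeo}. The two characterizations differ only in presentation: the coefficient diffeology is phrased via an abstract factorization through $\Gamma_E(V)$ and a module morphism $T$, whereas the condition to be established is phrased via explicit scalar functions $c^i$ and generators $X_i$. The bridge between them is the observation that, after shrinking $V$, a local frame of $E$ turns a $\Cinf(V)$-module morphism out of $\Gamma_E(V)$ into a finite $\Cinf$-linear combination of its values on that frame. Since both statements carry the same local quantifier (``for all $u_0 \in U$ and $p_0 \in M$''), it suffices to match the data on a common pair of neighborhoods.

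For the forward implication, suppose $\phi$ is a plot for the coefficient diffeology. Fixing $u_0$ and $p_0$, take the data $E \to V$, $T \colon \Gamma_E(V) \to \CF(V)$, and $\til\phi \colon U' \to \Gamma_E(V)$ provided by the definition. I would first shrink $V$ so that $E$ is trivial and fix a frame $e_1, \ldots, e_n \in \Gamma_E(V)$. Because $\til\phi$ is a plot of the mapping-space diffeology on $\Gamma_E(V)$, the associated map $U' \times V \to E$ is smooth; reading it off in the frame yields smooth functions $c^i \in C^\infty(U' \times V, \RR)$ with $\til\phi(u) = \sum_{i} c^i(u, \cdot)\, e_i$. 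Setting $X_i := T(e_i) \in \CF(V)$ and using that $T$ is $\Cinf(V)$-linear, I obtain $T(\til\phi(u)) = \sum_{i} c^i(u,\cdot)\, X_i$ for each fixed $u$. Since the defining diagram commutes, the left-hand side equals the restriction of $\phi(u)$ to $V$, and evaluating at a point $p \in V$ produces Equation~(\ref{eqn:coeffdif}).

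For the converse, suppose the local sum representation holds near $u_0$ and $p_0$, with data $c^1, \ldots, c^n$ and $X_1, \ldots, X_n \in \CF(V)$. I would take $E = V \times \RR^n$ with its standard frame $e_1, \ldots, e_n$, define $T \colon \Gamma_E(V) \to \CF(V)$ to be the $\Cinf(V)$-linear extension of $e_i \mapsto X_i$, and define $\til\phi \colon U' \to \Gamma_E(V)$ by $\til\phi(u) = \sum_{i} c^i(u,\cdot)\, e_i$. Smoothness of the $c^i$ shows the associated map $U' \times V \to E$ is smooth, so $\til\phi$ is a plot of $\Gamma_E(V)$; and $T \circ \til\phi$ agrees with the restriction of $\phi|_{U'}$ to $V$ pointwise by Equation~(\ref{eqn:coeffdif}). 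Since a vector field on $V$ is determined by its values and the inclusion $\CF(V) \into \SX(V)$ is injective, pointwise agreement upgrades to agreement as sections of $\CF(V)$, so the diagram commutes and $\phi$ is a plot.

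The only genuinely delicate point is the dictionary, in the forward direction, between smoothness of $\til\phi$ as a map into the mapping-space diffeology on $\Gamma_E(V)$ and ordinary smoothness of the extracted coefficient functions $c^i$; this is where local triviality of $E$ and the mapping-space example do the real work. Everything else is bookkeeping: shrinking $U'$ and $V$ consistently so all the data live on common neighborhoods, and using that the pointwise identity~(\ref{eqn:coeffdif}) is equivalent to an identity of sections in $\CF(V)$.
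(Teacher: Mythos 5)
Your converse direction coincides with the paper's construction verbatim: the trivial bundle of rank $n$ over $V$, the module morphism determined by $e_i \mapsto X_i$, and $\til\phi$ assembled from the coefficient functions $c^i$. The forward direction, however, takes a genuinely different route. The paper never shrinks $V$: it invokes the complement-bundle theorem (every vector bundle $E$ admits a $W$ with $W \oplus E$ trivial, Hirsch), replaces the data $(E, T, \til\phi)$ by $(W \oplus E,\; T \circ \mathrm{pr}_E,\; \iota_E \circ \til\phi)$, and reads the $c^i$ off a \emph{global} frame of the stabilized bundle; $\Cinf(V)$-linearity of $T \circ \mathrm{pr}_E$ then yields the identity with $X_i := T(\mathrm{pr}_E(e_i))$ over all of $V$. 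You instead localize, shrinking $V$ until $E$ is trivial. That is more elementary (local triviality rather than existence of complementary bundles), but it carries a cost you do not address: after shrinking, the frame $e_1, \ldots, e_n$ lives in $\Gamma_E(V')$ while $T$ is only given on $\Gamma_E(V)$, so the expression $X_i := T(e_i)$ is not yet defined. The repair is standard: a $\Cinf(V)$-linear map is a local operator --- if $s$ vanishes on an open $W$ then, for a bump function $\chi$ supported in $W$ and equal to $1$ near a chosen point, $s = (1-\chi)s$ gives $T(s) = (1-\chi)T(s)$, which vanishes near that point --- and hence $T$ restricts to $T|_{V'} \colon \Gamma_E(V') \to \CF(V')$ via cutoff extensions (equivalently, apply $T$ to $\chi e_i$ and work where $\chi \equiv 1$). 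With that two-line lemma inserted your argument is complete; the trade-off is that the paper's stabilization avoids any localization of $T$ at the price of citing a nontrivial bundle-theoretic fact, while your route uses only local triviality at the price of this locality lemma. One further remark: your final step upgrading pointwise agreement to agreement in $\CF(V)$ via injectivity of $\CF(V) \into \SX(V)$ is only available when $\CF$ is a submodule of vector fields, whereas the lemma is stated for an arbitrary sheaf of $\Cinf$-modules; in that generality Equation~(\ref{eqn:coeffdif}) must anyway be read as an identity of sections (which is how the paper implicitly treats it), so nothing is lost, but the appeal to $\SX$ should be flagged as specific to the foliation case.
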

\begin{proof}
If we are given a $\phi$ as in the statement of the lemma. We want to show that $\phi$ is smooth. Supose around $u_0$ and $p_0$ we have functions $c^i$ and elements $X_i \in \CF$ as in the statement of the lemma. Then the following choices witness the fact that $\phi$ is smooth: 
\begin{itemize}
\item $E := \mathbb{R}^n \times V$ is the trivial vector bundle of rank $n$
\item $T \colon\Gamma_E(V) \to C^\infty(M)$ is the module homomorphism associated to the choice of elements $X_1, \ldots , X_n$ and
\item $\til \phi$ is the smooth function:
\[ \til \phi \colon U' \to \Gamma_E(V) \qquad  \til \phi(u)_p := (c^1(u,p) , \ldots , c^n(u,p), p) \]
\end{itemize}
On the other hand, for any vector bundle $E$ there exists another vector bundle $W$ such that $W \oplus E$ is trivial (Theorem 3.3 in \cite{Hirsch}). Therefore, Definition~\ref{defi:coef.diffeo} is unchanged if we require that $E$ is the trivial bundle. So $\phi$ must locally factor through a trivial bundle. The functions $\{ c^i \}_{i=1}^n$ in Equation~(\ref{eqn:coeffdif}) correspond to the coefficients of $\til \phi$ relative to a choice of trivialization.
\end{proof}
The coefficient diffeology coincides with the mapping space diffeology when $\CF$ is the sections of a vector bundle. It also makes all $C^\infty_M$-module homomorphisms smooth. Although the definition of the coefficient diffeology is stated in terms of global sections, we can define the diffeology on sections over an arbitrary open subset similarly.

In the following example, we see that when $\CF$ is a submodule of the sections of a vector bundle, then the coefficient diffeology may not coincide with the subset diffeology.
\begin{ex}\label{ex:bad.diffeology}
Let $M = \mathbb{R}$ and $\CF = \langle f \frac{\partial}{\partial t} \rangle \le \SX(\mathbb{R})$ where $f \colon \mathbb{R} \to \mathbb{R}$ is a smooth function such that $f(x) = 0$ for $x \le 0$ and $f(t) > 0$ when $x > 0$. Consider the smooth function $X \colon \mathbb{R} \to \CF(\mathbb{R})$ such that
\[ X(t)(x_0) = f(x_0- t^2) \left(\frac{\partial}{\partial x}\right)_{x_0}\]
Then $X$ fails to be smooth relative to the coefficient diffeology on $\CF$. On the other hand, $X$ is smooth as a time-dependent vector field, i.e. as a smooth map $\mathbb{R} \to \SX(\mathbb{R})$.
\end{ex}
From now on, all $C^\infty_M$-modules shall be implicitly equipped with the coefficient diffeology.
\subsection{Diffeology and the fiberspace}
An important application of the coefficient diffeology is that it can be used to define a diffeology on the fiberspace. The contents of this subsection is not needed to understand the constructions in Section~\ref{sec:fpaths} where we construct the holonomy groupoid and fundamental groupoid of a singular foliation. However, it will be crucial for understanding Sections \ref{section:crainicfernandes} and \ref{section:functoriality} where we study the relationships of these constructions to algebroid homotopy and their functoriality properties, respectively.
\begin{defi}\label{defi:fiberdiffeology}
Given a $\Cinf$-module $\CF$ recall the fiberspace $A(\CF)$ from Definition~\ref{defi:localization}. Since the fiber of $\CF$ at $x$ is obtained from an equivalence relation on $\CF(M)$, the \emph{ evaluation map} is the surjective function:
\[ \ev \colon \CF(M) \times M \to A(\CF) \]
\[ (X,p) \mapsto \ev_p(X) \]
which sends $(X,p)$ to the class of $X$ in $A(\CF)_p$. The \emph{ diffeology on $A(\CF)$} is the unique diffeology which makes the evaluation map a subduction.
\end{defi}
\begin{rem}
There is a slight difference in notation here between the evaluation map that appears in \cite{AndrSk} and the evaluation map just defined. Note that the evaluation map used by Androulidakis and Skandalis is only well defined for a foliation whereas the definition above makes sense for an arbitrary sheaf of modules.
\end{rem}
The diffeology on the fiberspace makes $\pi \colon A(\CF) \to M$ into a local subduction. If we think of $\CF(M) \times M$ as a trivial vector bundle of infinite rank, the rescaling, addition, and zero section operations on $\CF(M) \times M$ descend to smooth maps.

In the terminology of Christensen and Wu\cite{christensen2020exterior} the set $A(\CF)$ is a diffeological vector pseudo-bundle\footnote{In some articles~\cite{tngtdiffeo} the term \emph{vector space} is also used for the same object}. The pseudo- prefix is motivated by the fact that diffeological vector bundles as defined here need not be locally trivial. We will include diffeological vector pseudo-bundles here but we will not really need it until Section~\ref{section:functoriality}.
\begin{defi}
A \emph{diffeological vector pseudo-bundle} over a diffeological space \( X \) consists of a smooth function\( \pi \colon E \to X \) of diffeological spaces together with three smooth functions:
\[ E \times_M E \to E \qquad (v,w) \mapsto v + w, \qquad  \mathbb{R} \times E \to E  \qquad (\epsilon , v ) \mapsto \epsilon \cdot v, \qquad  M \to E \qquad x \mapsto 0_x \]
which make each fiber of \( \pi \) into an \( \mathbb{R} \)-vector space. A \emph{morphism} of diffeological vector pseudo-bundles \( F \colon E \to W \) covering \( f \colon X \to Y \) is a smooth function such that \( \pi \circ F = f \circ \pi \) and \( F \) is fiber-wise linear.
\end{defi}
In \cite{AZ1} it was observed that the restriction of the fiberspace to a leaf is a Lie algebroid. The next lemma tells us that this Lie algebroid structure agrees with the associated subspace diffeology.
\begin{lem}\label{lem:ALisalgebroid}
Suppose $\CF \into \SX_M$ is a foliation on $M$ and $L \into M$ is a leaf. Then the set:
\[ A(\CF)_L := \pi^{-1}(L) \subseteq A(\CF) \]
is a Lie algebroid over $L$ when equipped with the subset diffeology.
\end{lem}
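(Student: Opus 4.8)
The Lie-algebroid data on $A(\CF)_L$ — its vector-bundle structure, the anchor $\rho\colon A(\CF)_L \to TL$, and the bracket on its sections — are already supplied by \cite{AZ1}, so the genuine content of the statement is the claim that the smooth structure underlying this Lie algebroid coincides, \emph{as a diffeology}, with the subset diffeology that $A(\CF)_L = \pi^{-1}(L)$ inherits from $A(\CF)$. Write $\mathcal{D}_{\mathrm{vb}}$ for the vector-bundle diffeology coming from \cite{AZ1} and $\mathcal{D}_{\mathrm{sub}}$ for the subset diffeology. The plan is to produce, locally over $L$, a single map that is simultaneously a subduction onto $(A(\CF)_L,\mathcal{D}_{\mathrm{vb}})$ and onto $(A(\CF)_L,\mathcal{D}_{\mathrm{sub}})$; since the diffeology making a given surjection a subduction is unique (it is the quotient diffeology), the two diffeologies must then agree over each such local piece, hence agree globally by the locality axiom (c) of Definition~\ref{defi:diffeo.sp}.

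To build the local model I would fix $x\in L$, an open $V\ni x$, and generators $X_1,\dots,X_n\in\CF(V)$, and set
\[ q\colon \RR^n\times(L\cap V)\to A(\CF)_L, \qquad q(c,p)=\sum_{i=1}^n c_i\,\ev_p(X_i), \]
using that $\ev_p$ factors through germs at $p$ so that local sections may be evaluated. Because the $X_i$ generate $\CF$ over $V$, the map $q$ is a fibrewise-surjective bundle map out of a trivial bundle onto the vector bundle $A(\CF)_L$ over $L\cap V$, hence a surjective submersion of smooth manifolds, and therefore a subduction for $\mathcal{D}_{\mathrm{vb}}$.

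The delicate half is showing $q$ is also a subduction for $\mathcal{D}_{\mathrm{sub}}$, and I expect this to be the main obstacle. Let $\psi\colon W\to A(\CF)_L$ be a plot, i.e.\ a plot of $A(\CF)$ valued in $\pi^{-1}(L)$. Since $\ev$ is a subduction onto $A(\CF)$ by Definition~\ref{defi:fiberdiffeology}, $\psi$ lifts locally to a plot $(\Xi,g)\colon W'\to\CF(M)\times M$ with $\ev\circ(\Xi,g)=\psi$; as $\pi\circ\ev$ is the projection to $M$, the curve $g$ is valued in $L$, and since leaves are initial submanifolds $g$ is smooth into $L$. Shrinking $W'$ and $V$, the characterization of plots of the coefficient diffeology (the lemma following Definition~\ref{defi:coef.diffeo}) lets me write $\Xi(u)|_V=\sum_i c^i(u,\cdot)X_i$ with $c^i\in C^\infty(W'\times V)$; because $\ev_{g(u)}$ depends only on the germ at $g(u)\in V$ and is fibrewise linear over functions, one gets $\psi(u)=\sum_i c^i(u,g(u))\,\ev_{g(u)}(X_i)=q(c^1(u,g(u)),\dots,c^n(u,g(u)),g(u))$. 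Hence $u\mapsto((c^i(u,g(u)))_i,g(u))$ is a smooth local lift of $\psi$ through $q$, so $q$ is a subduction for $\mathcal{D}_{\mathrm{sub}}$ as well. The three points that require care here are the restriction of the subduction $\ev$ to the preimage of $L$, the passage from global sections to local generators via germs at $g(u)$, and the initiality of leaves used to make $g$ smooth into $L$.

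Combining the two paragraphs, $\mathcal{D}_{\mathrm{vb}}$ and $\mathcal{D}_{\mathrm{sub}}$ both make $q$ a subduction over $L\cap V$, so they coincide there, and by locality they coincide on all of $A(\CF)_L$. With the diffeologies identified, it remains only to record that the anchor and bracket of \cite{AZ1} are smooth for this (now unambiguous) diffeology: in the local trivialization the anchor reads $(c,p)\mapsto\sum_i c_i X_i(p)$ and is manifestly smooth, while the bracket is inherited from the Lie bracket of vector-field representatives in $\CF$, which is well defined by involutivity (Definition~\ref{defi:involutive}) together with the Leibniz identity (using that $(Xf)(p)$ depends only on the value $X(p)$, i.e.\ only on $\rho$), and whose smoothness again follows from the local model. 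This exhibits $A(\CF)_L$ with the subset diffeology as precisely the Lie algebroid of \cite{AZ1}.
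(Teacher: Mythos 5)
Your proposal is correct, but it takes a genuinely different route from the paper's proof, even though both arguments are built around the same local model $q(c,p)=\sum_i c_i\,\ev_p(X_i)$ and both use the initiality of leaves and the subduction property of $\ev$. The paper does not treat the Lie algebroid structure of \cite{AZ1} as a black box to be compared against; it constructs the smooth structure outright. It chooses $X_1,\dots,X_n$ representing a \emph{basis} of the single fiber $A(\CF)_p$, invokes Nakayama's lemma to conclude that these form a minimal generating set on a neighborhood $U$, proves that $\phi\colon\mathbb{R}^n\times U\to A(\CF)_U$ is a surjective subduction via a commuting square of subductions, and then uses the constancy of the rank of $A(\CF)$ along the leaf to conclude that $\phi|_{U\cap L}$ is \emph{bijective}; a bijective subduction is a diffeomorphism of diffeological spaces, so this exhibits explicit local trivializations of the subset diffeology, after which the anchor and bracket are simply written down by the formulas $\rho(\ev_p(X))=X_p$ and $[S(X),S(Y)]=S([X,Y])$. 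Your argument replaces the bijectivity step by the observation that the diffeology making a fixed surjection a subduction is unique: you keep $q$ merely fibrewise surjective (so you need neither Nakayama's lemma, nor minimality, nor the constant-rank discussion) and instead verify the subduction property twice, once for the manifold diffeology coming from \cite{AZ1} and once for the subset diffeology. What the paper's route buys is self-containedness: the vector bundle and algebroid structures are produced rather than presupposed. What your route buys is economy, at the price of importing the smooth structure from \cite{AZ1} as input. Both write-ups gloss the same background facts (that generators of $\CF(V)$ still generate $\CF(V')$ for open $V'\subset V$, via bump functions and the sheaf gluing axiom, and that $\ev_p$ depends only on germs at $p$), so your proof sits at essentially the paper's level of rigor.
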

\begin{proof}
Since the leaf of a singular foliation is an initial submanifold. It follows that the smooth structure on $L$ is the subset diffeology relative to the inclusion $L \into M$. This ensures that the projection $\pi|_L \colon A(\CF)_L \to L$ is smooth.

We will now show that the subset diffeology on $A(\CF)_L$ makes it a smooth manifold by exhibiting local trivializations. Suppose $p \in L$ is an arbitrary point in the leaf in question. Let $X_1, \ldots , X_n$ be a set of sections of $\CF(M)$ which represent a basis for $A(\CF)_p$.
Nakayama's Lemma implies that $X_1 , \ldots , X_n$ are a minimal set of generators in some open neighborhood $U$ of $p$.

Now consider the following smooth function:
\[ \phi \colon \mathbb{R}^n \times U \to A(\CF)_{U} \]
\[ (c^1, \ldots , c^n, q) \mapsto \sum_{i} c^i \ev_q(X_i) \]
This function fits into a commuting square:
\[
\begin{tikzcd}
C^\infty_M(U)^n \times U \arrow[two heads]{d}{\ev} \arrow[two heads]{r} & \CF(U) \times U \arrow[two heads]{d}{\ev} & \\
\mathbb{R}^n \times U \arrow{r}{\phi} & A(\CF)_U
\end{tikzcd}
\]
Since $X_1, \ldots , X_n$ generate $\CF(U)$, the top arrow is a subduction. The evaluation maps are also subductions. Therefore $\phi$ is a surjective subduction. On the other hand, since the rank of $A(\CF)_L$ is constant, the restriction
\[ \phi|_{U \cap L} \colon \mathbb{R}^n \times (U \cap L) \to A(\CF)_{U \cap L} \]
is a bijective subduction (a diffeomorphism of diffeological spaces).
To see the algebroid structure, let $S \colon \CF(M) \to \Gamma_{A(\CF)_L}(L)$ denote the function which sends an element $X \in \CF(M)$ to the associated section of $A(\CF)_L$. The bracket on sections of $A(\CF)_L$ and the anchor map $A(\CF)_L \to TL$ are uniquely determined by the following equations:
\[ [ S(X), S(Y)]_{A(\CF)} = S([X,Y]_{\CF}) \qquad \rho(\ev_p(X)) = X_p \in T_p L \]
\end{proof}
\section{\texorpdfstring{$\CF$}{CF}-paths}\label{sec:fpaths}
From now on we assume that $\CF$ is a singular foliation and that $\CF(M)$ is equipped with the coefficient diffeology.
\begin{defi}
Given an open set $U \subseteq M$. A smooth function:
\[ X \colon [0,1] \to \CF(U) \qquad t \mapsto X(t) \]
is called a \emph{ time-dependent element of $\CF(U)$}. Given such a time-dependent element, the notation $\Phi^t_X \colon U \to U$ will denote the flow of $X$ (when it exists).
\end{defi}
Closely related to the notion of a time-dependent element is the notion of an $\CF$-path. This is the main object which we will use in our construction of the holonomy and fundamental groupoids.
\begin{defi}[$\CF$-Path]\label{defi:f.path}
An \emph{ $\CF$-path } is a pair $(X,p)$ where $X$ is a time-dependent element of $\CF(M)$ and $p$ is an integral curve of $X$.
The set of all $\CF$-paths will be denoted by $\CP(\CF)$.

The \emph{ source} of $(X,p)$ is defined to be $p(0) \in M$ while the \emph{ target} is $p(1)$. The source and target maps define subductions $\s,\t \colon \CP(\CF) \to M$.
\end{defi}
The next lemma tells us that the flow of a time-dependent element of $\CF$ is compatible with the coefficient diffeology on $\CF(M)$.
\begin{lem}\label{lem:preserves.foliation}
Suppose $(X,p_0)$ is an $\CF$-path and the flow $\Phi^t_X$ of $X$ exists for all $t \in [0,1]$. Then
\[ (\Phi^t_X)_* \colon \SX(M) \to \SX(M) \]
satisfies $(\Phi^t_X)_* (\CF_M) = \CF_M$ and the following function is smooth:
\[ [0,1] \times \CF(M) \to \CF(M) \qquad (t,Y) \mapsto (\Phi^t_X)_* Y\]
\end{lem}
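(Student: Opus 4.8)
The plan is to prove the inclusion $(\Phi^t_X)_*\CF(M) \subseteq \CF(M)$ together with the asserted smoothness, and to deduce the equality $(\Phi^t_X)_*\CF(M)=\CF(M)$ formally. Indeed, the inverse $(\Phi^t_X)^{-1}$ is itself the flow of a time-dependent element of $\CF$, namely the reparametrized and negated field $s\mapsto -X(t-s)$ (which is again smooth in the coefficient diffeology, since negation is a module operation and precomposition with a smooth reparametrization preserves plots). Hence the inclusion applied to this field gives $(\Phi^t_X)^{-1}_*\CF(M)\subseteq \CF(M)$, and applying $(\Phi^t_X)_*$ to this yields $\CF(M)\subseteq(\Phi^t_X)_*\CF(M)$. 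The starting point for the inclusion is the classical fact that, for a fixed $Y$, the curve $Z(t):=(\Phi^t_X)_* Y$ obeys the Lie-derivative evolution equation
\[ \tfrac{d}{dt} Z(t) = -[X(t),Z(t)] \quad\text{in } \SX(M), \qquad Z(0)=Y. \]
Since $\CF$ is involutive (Definition~\ref{defi:involutive}), the operator $Z\mapsto[X(t),Z]$ is heuristically tangent to $\CF$; the substance of the proof is to turn this heuristic into an explicit construction that visibly stays inside $\CF$ and coincides with $Z(t)$.

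Because $\CF$ need not be projective, I would work locally and then glue. Fix $p_0\in M$; by local finite generation (Definition~\ref{defi:finitely.gen}) choose generators $X_1,\dots,X_N$ of $\CF$ on a neighborhood $V$. Smoothness of $X$ in the coefficient diffeology (this is exactly the phenomenon isolated by Example~\ref{ex:bad.diffeology}) lets me write, after shrinking $V$, $X(t)=\sum_k c^k(t,\cdot)X_k$ with $c^k$ smooth in $(t,\cdot)$; combining this with involutivity, which expresses each $[X_k,X_j]$ as a smooth $C^\infty$-combination of the $X_i$, produces smooth structure functions $a^i_j(t,\cdot)$ with $[X(t),X_j]=\sum_i a^i_j(t,\cdot)X_i$. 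Making the ansatz $Z(t)=\sum_i h^i(t,\cdot)X_i$ and substituting into the evolution equation, the $X_i$-components reduce the equation to the linear transport system
\[ \partial_t h^i + X(t)h^i = -\sum_j a^i_j(t,\cdot)\,h^j, \qquad h^i(0,\cdot)=g^i, \]
where $Y=\sum_i g^i X_i$ on $V$.

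This transport system is solved by the method of characteristics: along a characteristic $\xi(t)=\Phi^t_X(x_0)$ the left-hand side is a total time derivative, so $H^i(t):=h^i(t,\xi(t))$ satisfies the finite-dimensional linear ODE $\dot H^i=-\sum_j a^i_j(t,\xi(t))H^j$ with $H^i(0)=g^i(x_0)$. Solving this and transporting back along the flow yields smooth coefficients $h^i$, hence an element $\sum_i h^i(t,\cdot)X_i\in\CF$, defined wherever the characteristics remain in $V$. Since this element and $Z(t)$ solve the same first-order linear evolution equation with the same initial value $Y$, uniqueness of solutions of the transport equation forces them to agree on the region swept out; thus $Z(t)$ locally lies in $\CF$. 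Covering a trajectory by finitely many generating neighborhoods (using compactness of $[0,1]$ and the group law of the flow to subdivide the time interval) and invoking the gluing axiom for the sheaf $\CF$ then upgrades this to $Z(t)=(\Phi^t_X)_* Y\in\CF(M)$ for every $t\in[0,1]$.

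For smoothness relative to the coefficient diffeology (Definition~\ref{defi:coef.diffeo}), I would feed an arbitrary plot $u\mapsto(\tau(u),\phi(u))$ of $[0,1]\times\CF(M)$ into the map and check that $u\mapsto(\Phi^{\tau(u)}_X)_*\phi(u)$ is again a plot. Locality of the plot condition lets me reuse the local description above: near each point $\phi$ has the form $u\mapsto\sum_i c^i(u,\cdot)X_i$ with smooth $c^i$, and the coefficients of $(\Phi^{\tau(u)}_X)_*\phi(u)$ are obtained by solving the characteristic ODE with parameter $\tau(u)$ and initial data $c^i(u,\cdot)$. Smooth dependence of linear-ODE solutions on parameters and initial conditions shows that the composite factors locally through sections of the trivial bundle spanned by $X_1,\dots,X_N$, which is precisely the defining condition for a plot in the coefficient diffeology. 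The main obstacle I anticipate is not a single estimate but the bookkeeping forced by the singular setting: there is no global frame for $\CF$, so every step must be assembled from local generators and then glued, and at each stage one must certify genuine smoothness of the structure functions $a^i_j$ and of the transported coefficients $h^i$. This is exactly where the coefficient diffeology does the essential work, since it is what guarantees that $X(t)$ admits smooth coefficients $c^k(t,\cdot)$; without this (as Example~\ref{ex:bad.diffeology} warns) the structure functions could fail to be smooth and the characteristic ODE would not depend smoothly on its data.
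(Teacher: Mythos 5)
Your proof is correct, and its skeleton is the same as the paper's: reduce to a neighborhood on which $\CF$ is finitely generated by $X_1,\dots,X_n$; use smoothness of $X$ in the coefficient diffeology together with involutivity to produce structure functions $a^k_i(t)$, smooth in all variables, with $[X(t),X_i]=\sum_k a^k_i(t)X_k$; transport coefficients along the flow by a linear law; and deduce diffeological smoothness from the resulting factorization through sections of a trivial bundle. The difference is in how the transport step is packaged. The paper assembles the $a^k_i$ into a time-dependent derivation $(D^t,X(t))$ on the trivial bundle $\SR^n$ (invoking the theory of derivations, i.e.\ covariant differential operators), takes its flow $\Phi^t_{(D,X)}$, and proves the single identity $(\Phi^t_X)_*Y=\Psi\circ(\Phi^t_{(D,X)})_*\til Y$, where $\Psi\colon\SR^n\to TM$ sends the frame to the generators; membership in $\CF(M)$ and smoothness both fall out of this identity at once, since its right-hand side is literally a factorization through $\Gamma(\SR^n)$. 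You instead write the transport system for the coefficients $h^i$ of the pushforward, solve it by characteristics (the linear ODE along flow lines is exactly the paper's derivation flow expressed in the frame), and then need a separate uniqueness argument for the evolution equation $\tfrac{d}{dt}Z=-[X(t),Z]$ to identify your constructed section of $\CF$ with $(\Phi^t_X)_*Y$; that uniqueness is indeed standard (conjugating by the flow trivializes the equation), so the step is sound, but it is extra work that the paper's formulation avoids. Your route is more elementary --- parameter-dependent linear ODEs instead of derivation flows --- at the price of this identification step and heavier bookkeeping. On the other hand, you make explicit two points the paper glosses over: equality rather than mere inclusion, via the observation that $(\Phi^t_X)^{-1}$ is the time-$t$ flow of the time-dependent element $s\mapsto -X(t-s)$ of $\CF$, and the covering/subdivision argument along trajectories that justifies the reduction ``the property is local in $M$.'' Both additions are correct and genuinely clarify the paper's proof.
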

\begin{proof}{}
For this proof we use the theory of derivations on a vector bundle (sometimes called covariant differential operators)~\cite{MK2}. The general strategy of the proof is inspired by the similar proof (for constant vector fields) in \cite{AndrSk} and \cite{AutOri}. The property that we wish to show is local in $M$. Therefore, we can assume without loss of generality that $\CF(M)$ is finitely generated.

Suppose $\{ X_i \}_{i=1}^n \subseteq \CF(M)$ is a set of generators. Let $\SR^n$ be the trivial vector bundle of rank $n$ and let $\{ e_i \}_{i=1}^n$ be the canonical basis. Define $\Psi \colon \SR^n \to TM$ to be the unique vector bundle morphism such that $\Psi(e_i) = X_i$.

Since $X \colon [0,1] \to \CF(M)$ is smooth and $\CF$ finitely generated, for each $ 1 \le i,k \le n$ the definition of the coefficient diffeology tells us that there exist smooth functions $a^k_{i} \colon [0,1] \to \Cinf(M)$ such that:

\[ [X(t), X_i ] = \sum_{k=1}^n a^k_i(t) X_k \]
Let $(D^t, X(t))$ be the unique time-dependent derivation on $\SR^n$ such that:
\[ D^t( e_i ) = \sum_{k=1}^n a^k_i(t) e_k \]
Let $\Phi_{(D,X)}^t \colon \SR^n \to \SR^n$ denote the flow of this derivation.

The flow $\Phi^t_X \colon M \to M$ can be thought of as the flow of the time-dependent derivation $([X, \cdot ] , X)$ and from the definition of $D$ it follows that:
\begin{equation}\label{eqn:dercompat}
\forall e \in \Gamma(\SR^n) , \qquad \Psi \circ D(e ) = [X , \Psi(e)]
\end{equation}
We can now finish the proof. For any $Y \in \CF(M)$, there exists a lift $\til Y \in \Gamma(\SR^n)$. By Equation~(\ref{eqn:dercompat}), we have that:
\[ (\Phi^t_X)_* Y = \Psi \circ (\Phi^t_{(D,X)})_* \til Y \]
Since the image of $\Psi$ is $\CF(M)$ it follows that $(\Phi^t_X)_* Y \in \CF(M)$. Furthermore, the right hand side gives a factorization of $t \mapsto (\Phi^t_X)_* Y $ through a vector bundle. This implies that $t \mapsto (\Phi^t_X)_* Y $ is smooth and therefore $(t, Y) \mapsto (\Phi^t_X)_* Y$ is smooth.
\end{proof}
The set of $\CF$-paths has a natural monoidal structure. Let us fix a reparameterization of the interval $r \colon [0,1] \to [0,1]$ such that $r$ is constant in an open neighborhood of the endpoints.
\begin{defi}\label{defi:concat}
Suppose $(X,p)$ and $(Y,q)$ are $\CF$-paths such that $p(1) = q(0)$. Then we can define the concatenation:
\[ (Y,q) \odot (X,p) := (Y \odot X, q \odot p) \]
\[(Y \odot X) (t) := \begin{cases} 2 r'(2t) X(r(2t)) & t \in [0, \frac{1}{2}] \\ 2 r'(2t-1)Y(r(2t-1)) & t \in (\frac{1}{2},1]
\end{cases}\]
\[
(q \odot p)(t) := \begin{cases} p(r(2t)) & t \in [0, \frac{1}{2} ] \\ q(r(2t-1)) & t \in (\frac{1}{2}, 1] \end{cases} \]
\end{defi}
\begin{rem}\label{rmk:flowvsconcat}
Concatenation is compatible with flow in the following sense:
Given an arbitrary $\CF$-path $(X,p)$, we know that the time-one flow of $X$ exists in a neighborhood of $p(0)$. Furthermore, given two $\CF$-paths $(X, p)$ and $(Y, q)$, then:
\[ \Phi_{Y \odot X}^1 = \Phi^1_Y \circ \Phi^1_X \]
wherever the relevant flows exist. This means that the monoidal structure on $\CF$-paths is compatible with the groupoid of germs of diffeomorphisms over $M$. \end{rem}
The goal for the rest of this section will be to define two equivalence relations on $\CP(\CF)$, called holonomy and homotopy, for which the concatenation descends to an honest (diffeological) groupoid structure on the equivalence classes.
\subsection{Holonomy}\label{subsec:holonomy}
The notion of holonomy for classical (i.e. regular) foliations comes from constructing germs of diffeomorphisms between slices (i.e transverse submanifolds) out of paths tangent to the leaves.
In the singular case, this approach is complicated by the increased ambiguity when choosing slices. Following the example of Androulidakis and Zambon~\cite{AZ2}, we correct this problem by operating modulo a subgroup of diffeomorphisms which control this ambiguity.
\begin{defi}[Slices]
Suppose $(M,\CF)$ is a foliated manifold.
A \emph{ slice through $x \in M$} is an embedded submanifold $S \hookrightarrow M$, containing $x$, such that for all $y \in S$, we have that $T_y M = T_y S + T_y L_y$ where $L_y$ is the leaf containing $y$ and $T_x S \cap T_x L_x = 0$.
\end{defi}
Now, for all $x \in M$, let us choose a slice $S_x$ through $x$. The transversality property for slices implies that each $S_x$ inherits a foliation $\CF_{S_x}$ by restriction of the foliation on $M$ to $S_x$\cite{AZ2}. Let $\Diff_\CF(S_x,S_y)$ denote the set of germs of (foliation preserving) diffeomorphisms $S_x \to S_y$ which map $x$ to $y$. Finally, $\exp(I_x \CF_{S_x}) \le \Diff(S_x, S_x)$ denotes the subgroup of germs of diffeomorphisms generated by the flows of (possibly time dependent) elements of $I_x \CF_{S_x}$.
\begin{defi}
The \emph{ holonomy transformation groupoid} is the set:
\[ \HT := \bigsqcup_{x,y \in M} \frac{\Diff_\CF(S_x, S_y)}{\exp(I_y \CF_{S_y})} \]
Elements of $\HT$ are called holonomy transformations.
\end{defi}
\begin{rem}
The holonomy transformation groupoid is not a totally canonical object since it depends on the choice of a slice $S_x$ at every point $x \in M$. When $\CF$ is regular, the foliation on each slice is trivial and the holonomy transformation groupoid can be identified with the groupoid of germs of diffeomorphisms between slices.
\end{rem}
Our next step is to define a function $\Hol \colon \CP(\CF) \to \HT$ which will associate a holonomy transformation to every $\CF$-path. To state the definition, we will need to borrow Lemma A.8 from \cite{AZ2}.
\begin{lem}\label{lem:slice}
Suppose $S$ and $S'$ are two slices through $x \in M$. Then there exists a vector field $X \in I_x \CF$ and an open neighborhood $U \subseteq M$ of $x$ such that:
\[ U \cap \Phi_X^1 (S) = U \cap S' \]
\end{lem}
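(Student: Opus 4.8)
The plan is to produce explicit local coordinates adapted to the leaf through $x$ in which both slices become simple, and then read off the vector field directly. First I would reduce to a neighborhood of $x$ and choose vector fields $Y_1, \ldots, Y_r \in \CF$ whose values $Y_1(x), \ldots, Y_r(x)$ form a basis of $T_x L_x$, where $r = \dim L_x$. Using iterated flows I would define a map
\[ \Theta \colon S \times \RR^r \to M, \qquad \Theta(y, a) := \Phi^{a_r}_{Y_r} \circ \cdots \circ \Phi^{a_1}_{Y_1}(y), \]
for $(y,a)$ near $(x,0)$. Computing $d\Theta_{(x,0)}$ shows it restricts to the inclusion on $T_x S$ and sends $\partial_{a_i}$ to $Y_i(x)$, so the transversality condition $T_x M = T_x S \oplus T_x L_x$ forces $\Theta$ to be a local diffeomorphism. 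In these coordinates $S = \{ a = 0 \}$.

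The second step is to identify $S'$ in these coordinates. A direct computation shows each coordinate field satisfies $\partial_{a_i} = (\Phi^{a_r}_{Y_r} \circ \cdots \circ \Phi^{a_{i+1}}_{Y_{i+1}})_* Y_i$, which by Lemma~\ref{lem:preserves.foliation} is a section of $\CF$ near $x$. In particular the fibers $\{ y = \mathrm{const} \}$ are tangent to the characteristic distribution, and at $x$ their tangent space is exactly $T_x L_x$. Since $S'$ is a slice, $T_x S' \oplus T_x L_x = T_x M$, so $S'$ is transverse to these fibers and, by the implicit function theorem, is locally the graph $S' = \{ a = h(y) \}$ of a smooth map $h$ with $h(x) = 0$; the normalization $h(x) = 0$ holds because $x \in S'$ sits at $a = 0$.

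Finally I would set $Z := \sum_{i=1}^r h^i \, \partial_{a_i}$, where the coefficients $h^i$ are regarded as functions on $M$ via the projection $(y,a) \mapsto y$. Writing $W_i := (\Phi^{a_r}_{Y_r} \circ \cdots \circ \Phi^{a_{i+1}}_{Y_{i+1}})_* Y_i \in \CF$, we have $Z = \sum_i h^i W_i$ with $h^i(x) = 0$, so $Z \in I_x \CF$; multiplying by a bump function supported near $x$ preserves membership in $I_x \CF$ and makes the flow complete without altering $Z$ near $x$. Because the coefficients $h^i$ depend only on $y$, the flow of $Z$ solves $\dot y = 0$, $\dot a = h(y)$, so $\Phi^1_Z(y,a) = (y, a + h(y))$, which carries $S = \{ a = 0 \}$ onto $S' = \{ a = h(y) \}$ on a neighborhood $U$ of $x$, as required.

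I expect the main obstacle to be the bookkeeping in the second step: verifying that each $\partial_{a_i}$ lands in $\CF$ (which is precisely where the non-commutativity of the $Y_i$ must be absorbed into flow pushforwards and controlled via Lemma~\ref{lem:preserves.foliation}), and confirming that the transversality hypothesis on $S'$ yields the graph description with the crucial vanishing $h(x) = 0$.
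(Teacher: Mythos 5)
The paper does not actually prove this lemma: it is imported verbatim as Lemma~A.8 of \cite{AZ2}, so the only "paper proof" is an external citation, and your proposal supplies a self-contained argument where the paper has none. Its architecture is sound. The iterated-flow chart $\Theta$ is a local diffeomorphism precisely because the slice condition gives $T_xM = T_xS \oplus T_xL_x$; transversality of $S'$ to the fibers $\{y = \mathrm{const}\}$ at $x$ yields the graph presentation $S' = \{a = h(y)\}$ with $h(x)=0$; and for $Z = \sum_i h^i(y)\,\partial_{a_i}$ the flow computation $\Phi^t_Z(y,a) = (y, a + t\,h(y))$ is correct, so $\Phi^1_Z$ carries $S$ onto $S'$ near $x$, with the bump-function truncation harmless.

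The one step that needs more than you give it is exactly the one you flag: membership of the coordinate fields in $\CF$. Lemma~\ref{lem:preserves.foliation} shows that $(\Phi^{a_r}_{Y_r}\circ\cdots\circ\Phi^{a_{i+1}}_{Y_{i+1}})_*Y_i \in \CF$ for each \emph{fixed} value of $(a_{i+1},\dots,a_r)$, but in your identity those parameters are coordinates of the evaluation point. So what you get immediately is only that $\partial_{a_i}$ agrees at each point with the value of \emph{some} element of $\CF$, and pointwise agreement of this kind does not in general imply membership in the module: on $\RR$ with $\CF = \langle x^2\partial_x\rangle$, the field $x\partial_x$ agrees at every point with the value of an element of $\CF$, yet $x\partial_x \notin \CF$. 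The gap is closable with the paper's own tools. By the smoothness clause of Lemma~\ref{lem:preserves.foliation} (applied iteratively), the assignment $(a_{i+1},\dots,a_r) \mapsto (\Phi^{a_r}_{Y_r})_*\cdots(\Phi^{a_{i+1}}_{Y_{i+1}})_*Y_i$ is a smooth map into $\CF(M)$; by the characterization of the coefficient diffeology (the lemma immediately following Definition~\ref{defi:coef.diffeo}) together with local finite generation, this family can be written near $x$ as $\sum_k c^k(a,p)\,(X_k)_p$ with $X_k$ local generators of $\CF$ and $c^k$ smooth; substituting for $a$ the coordinate functions of the point $q$ then exhibits $\partial_{a_i}$ as a $C^\infty$-linear combination of the $X_k$, hence an honest local section of $\CF$, and your $Z$ lies in $I_x\CF$ as claimed. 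With that argument inserted, the proof is complete.
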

Now suppose we are given an $\CF$-path $(X,p)$. Then, Lemma~\ref{lem:preserves.foliation}, implies that $\Phi_X^1(S_{p(0)})$ is a slice through $p(1)$.
Furthermore, Lemma~\ref{lem:slice}, tells us that there exists a vector field $Z \in I_{p(1)} \CF$ and a neighborhood $U \subseteq M$ of $p(0)$ such that:
\[ \Phi_Z^1 \circ \Phi_X^1 (S_{p(0)} \cap U ) \subseteq S_{p(1)} \]
Since $\Phi^1_Z \circ \Phi^1_X$ is a local diffeomorphism around $p(0)$ it defines an element of $\Diff_\CF(S_{p(0)} , S_{p(1)})$ which represents an element of $\HT$.
This forms the basis for our definition of holonomy.
\begin{defi}\label{defi:holonomy}
Let $(X,p)$ be a $\CF$-path. Suppose $Z$ is a time-dependent element of $I_{p(1)} \CF$ such that the function
\[ \Phi_Z^1 \circ \Phi_X^1|_{S_{p(0)}} \colon S_{p(0)} \to S_{p(1)} \]
is well defined in a neighborhood of $p(0) \in S_{p(0)}$.
The \emph{ holonomy of $(X,p)$}, denoted $\Hol(X,p)$, is the equivalence class of $\Phi_Z^1 \circ \Phi_X^1$ in
\[ \frac{\Diff_\CF(S_{p(0)} , S_{p(1)} )}{\exp( I_{p(1)} \CF_{S_{p(1)}})} \subseteq \HT \]
The \emph{ holonomy equivalence} relation is the equivalence relation generated by the fibers of $\Hol$.
\end{defi}

\begin{figure}
\centering
\includegraphics[scale=1.25]{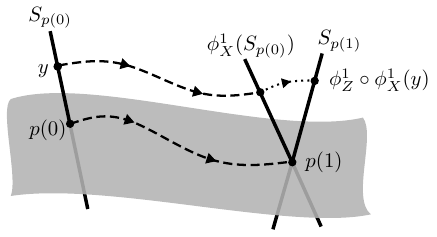}
\caption{A visualization the holonomy of an $\CF$-path $(X,p)$. The dashed lines represent integral curves of $X$ while the dotted line represents an integral curve of $Z$.}
\label{holonomy_visualization}
\end{figure}

\begin{thm}\label{thm:holwd}
The function $\Hol \colon \CP(\CF) \to \HT$ is well-defined.
\end{thm}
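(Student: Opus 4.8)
The plan is to show that the class $\Hol(X,p)$ is insensitive to the choices made in Definition~\ref{defi:holonomy}. Since the slices $S_x$ are fixed once and for all and the germ of $\Phi^1_X$ is determined by the $\CF$-path $(X,p)$, the only ambiguity is the choice of the time-dependent element $Z\in I_{p(1)}\CF$. Existence of at least one such $Z$ is already established in the discussion preceding the definition: Lemma~\ref{lem:preserves.foliation} shows $\Phi^1_X$ carries $S_{p(0)}$ to a slice through $p(1)$, and Lemma~\ref{lem:slice} then produces $Z$ with $\Phi^1_Z\circ\Phi^1_X(S_{p(0)})\subset S_{p(1)}$. So the content is to compare two valid choices $Z$ and $Z'$.

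First I would compute the difference of the two representatives. Writing $\psi=\Phi^1_Z\circ\Phi^1_X|_{S_{p(0)}}$ and $\psi'=\Phi^1_{Z'}\circ\Phi^1_X|_{S_{p(0)}}$, both are germs of diffeomorphisms $S_{p(0)}\to S_{p(1)}$, so $\psi'\circ\psi^{-1}$ is a germ of diffeomorphism of $S_{p(1)}$ fixing $p(1)$. Because the ambient factor $\Phi^1_X$ cancels, this germ equals the restriction to $S_{p(1)}$ of $\Phi^1_{Z'}\circ(\Phi^1_Z)^{-1}$. By the description of the quotient in $\HT$ (right translation of $\Diff_\CF(S_{p(0)},S_{p(1)})$ by the subgroup $\exp(I_{p(1)}\CF_{S_{p(1)}})\le\Diff(S_{p(1)},S_{p(1)})$), the two classes coincide if and only if this germ lies in $\exp(I_{p(1)}\CF_{S_{p(1)}})$. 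I would then repackage the germ as a single flow: the inverse $(\Phi^1_Z)^{-1}$ is the time-one flow of the time-reversal of $Z$, which again lies in $I_{p(1)}\CF$, and concatenating with $Z'$ via Remark~\ref{rmk:flowvsconcat} yields a time-dependent $W\in I_{p(1)}\CF$ with $\Phi^1_W=\Phi^1_{Z'}\circ(\Phi^1_Z)^{-1}$. Since each $W_t$ vanishes at $p(1)$ and lies in $\CF$, the flow $\Phi^t_W$ fixes $p(1)$ and preserves the leaves of $\CF$, and by construction $\Phi^1_W$ preserves the germ of $S_{p(1)}$.

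This reduces the theorem to the following claim: if $W$ is a time-dependent element of $I_x\CF$ whose time-one flow preserves the germ of a slice $S_x$, then $\Phi^1_W|_{S_x}\in\exp(I_x\CF_{S_x})$. I expect this claim to be the main obstacle, and the difficulty is that, although $\Phi^1_W$ returns to $S_x$, the intermediate diffeomorphisms $\Phi^t_W$ need not preserve $S_x$; indeed $W$ may be transverse to $S_x$ at intermediate times, so one cannot simply restrict $W$ to a slice-tangent generator. To overcome this I would introduce the smooth family of slices $S_t:=\Phi^t_W(S_x)$ through $x$, observe that $S_0=S_1=S_x$ as germs, and use a parametrized version of Lemma~\ref{lem:slice} to choose, smoothly in $t$, correcting elements $G_t\in I_x\CF$ with $\Phi^1_{G_t}(S_t)=S_x$ and $G_0=G_1=0$.

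The composite $\rho_t:=\Phi^1_{G_t}\circ\Phi^t_W|_{S_x}$ is then a smooth isotopy of germs of self-maps of $S_x$ running from the identity $\rho_0$ to $\rho_1=\Phi^1_W|_{S_x}$; differentiating it in $t$ produces a time-dependent generator $V_t$ on $S_x$ with $\Phi^1_V|_{S_x}=\Phi^1_W|_{S_x}$. The crux is to verify that $V_t$ is tangent to $S_x$ and belongs to $I_x\CF_{S_x}$: tangency and vanishing at $x$ follow because $\rho_t$ stays in $S_x$ and fixes $x$, while membership in $\CF_{S_x}$ must be extracted from the fact that $\rho_t$ is assembled entirely from flows of elements of $\CF$. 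Pinning down this last step cleanly—equivalently, recognizing that the whole construction realizes the Androulidakis--Zambon equivalence of~\cite{AZ2}, for which this well-definedness is exactly the designed property—is where the genuine work lies; the preceding reductions are essentially formal.
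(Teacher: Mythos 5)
Your reduction is exactly the paper's argument: existence of at least one admissible $Z$ via Lemma~\ref{lem:preserves.foliation} and Lemma~\ref{lem:slice}, cancellation of the common factor $\Phi^1_X$ so that comparing two choices $Z$, $Z'$ becomes the question of whether the germ of $\Phi^1_{Z'}\circ(\Phi^1_Z)^{-1}$ on $S_{p(1)}$ lies in $\exp(I_{p(1)}\CF_{S_{p(1)}})$, and repackaging this composite as the time-one flow of a single time-dependent element $W$ of $I_{p(1)}\CF$. Moreover, the claim you isolate as the ``main obstacle''---that a time-dependent element of $I_x\CF$ whose time-one flow preserves $S_x$ restricts on $S_x$ to the time-one flow of a time-dependent element of $I_x\CF_{S_x}$---is precisely Lemma~\ref{lem:slice2}, i.e.\ Lemma A.6 of \cite{AZ2}, which the paper states immediately before its proof and invokes as a black box. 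So, modulo citing that lemma rather than reproving it, your proposal \emph{is} the paper's proof.

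Where you diverge is in attempting to supply a proof of that claim yourself, and the gaps you acknowledge there are genuine. Two steps of your isotopy argument need real work: (i) the parametrized version of Lemma~\ref{lem:slice}---producing $G_t\in I_x\CF$ with $\Phi^1_{G_t}(S_t)=S_x$, depending smoothly on $t$ and with $G_0=G_1=0$---is not a formal consequence of the pointwise statement; and (ii) even granting that the velocity field $V_t$ of $\rho_t$ lies in $I_x\CF$ and is tangent to $S_x$ (which itself requires a variation-of-parameters computation in the spirit of Equation~(\ref{eqn:variation})), one must still show that an element of $I_x\CF$ tangent to $S_x$ restricts to an element of $I_x\CF_{S_x}$ for the induced foliation on the slice; this is not automatic from the definition of $\CF_{S_x}$. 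These are exactly the technical points that Lemma A.6 of \cite{AZ2} settles, so the efficient completion of your argument is to quote that lemma, as the paper does, rather than to rebuild it.
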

To prove this theorem, we will make use of another lemma from \cite{AZ2}:
\begin{lem}\label{lem:slice2}
Suppose $X \colon [0,1] \to \CF(M)$ is a time-dependent element of $I_x \CF$ such that $\Phi^1_X (S_x) \subseteq S_x$. Then there exists another time-dependent element $Y \colon [0,1] \to I_x \CF_{S_x}$ of $I_x \CF_{S_x}$ such that $\Phi^1_X |_{S_x} = \Phi_{Y}^1$.
\end{lem}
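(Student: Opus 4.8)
The statement is local around $x$, so I would work with germs at $x$. Since every $X(t)$ lies in $I_x\CF$ and therefore vanishes at $x$, the point $x$ is a fixed point of the flow, and $\psi := \Phi^1_X|_{S_x}$ is a germ of diffeomorphism of $(S_x,x)$ fixing $x$; by the same reasoning as in Lemma~\ref{lem:preserves.foliation} it preserves the restricted foliation $\CF_{S_x}$. Proving the lemma amounts to exhibiting $\psi$ as the time-one flow of a time-dependent element $Y(t)\in I_x\CF_{S_x}$, i.e. to connecting $\psi$ to the identity through a smooth isotopy of $S_x$ whose infinitesimal generator is tangent to $S_x$ and lies in $I_x\CF_{S_x}$.

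The plan is to build this isotopy by following $X$ and correcting back to the slice. For $s\in[0,1]$ the image $\Phi^s_X(S_x)$ is again a slice through $x$, because $\Phi^s_X$ fixes $x$ and preserves $\CF$ (Lemma~\ref{lem:preserves.foliation}). Using a version of Lemma~\ref{lem:slice} with smooth dependence on a parameter, I would choose a time-dependent $Z(s)\in I_x\CF$, smooth in $s$ and with $Z(1)=0$, such that $\Phi^1_{Z(s)}$ carries a neighborhood of $x$ in $\Phi^s_X(S_x)$ back into $S_x$. Setting
\[ \psi_s := \Phi^1_{Z(s)}\circ\Phi^s_X\big|_{S_x}\colon S_x\to S_x, \]
one gets $\psi_0=\mathrm{id}$, $\psi_1=\psi$ (using $\Phi^1_X(S_x)\subset S_x$ and $Z(1)=0$), and each $\psi_s$ a foliated germ fixing $x$.

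I would then take $Y(s):=\tfrac{d}{ds}\psi_s\circ\psi_s^{-1}$, the time-dependent vector field generating the isotopy, so that $\Phi^1_Y=\psi_1=\Phi^1_X|_{S_x}$ by construction, and $Y(s)$ is automatically tangent to $S_x$ since $\psi_s$ maps $S_x$ into itself. It remains to check $Y(s)\in I_x\CF_{S_x}$. Differentiating $\psi_s$ produces a combination of $X(s)$, $\tfrac{d}{ds}Z(s)$ and their pushforwards by the foliation flows $\Phi^s_X$ and $\Phi^1_{Z(s)}$; since these flows fix $x$, pushforward sends $I_x\CF$ into $I_x\CF$ (if $f(x)=0$ and the flow fixes $x$, then $(\Phi^{-1})^*f$ still vanishes at $x$, while pushforward preserves $\CF$ by Lemma~\ref{lem:preserves.foliation}). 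Hence $Y(s)\in I_x\CF$.

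The hard part is the final step: upgrading \emph{$Y(s)\in I_x\CF$ and tangent to $S_x$} to \emph{$Y(s)|_{S_x}\in I_x\CF_{S_x}$}. This is not automatic, because for a singular foliation a vector field that is tangent to $S_x$, lies in $\CF$, and vanishes at $x$ need not be a combination $\sum_i f_i V_i$ with $f_i(x)=0$ and each $V_i\in\CF$ tangent to $S_x$ — yet it is exactly such a tangential decomposition that membership in $I_x\CF_{S_x}$ requires. Establishing this demands understanding how $\CF$ sits relative to a transversal near $x$: one must show that elements of $I_x\CF$ tangent to $S_x$ can be rewritten, modulo vanishing-coefficient corrections, in terms of generators of $\CF$ that are themselves tangent to $S_x$. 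I would prove this by choosing generators of $\CF$ adapted to the splitting $T_xM=T_xS_x\oplus T_xL_x$ and using that the leaf directions are spanned by values of $\CF$, so that the transverse part of any tangent element of $I_x\CF$ can be absorbed into such corrections. This adapted-generators/restriction step, together with the parametrized form of Lemma~\ref{lem:slice}, is where essentially all of the work lies.
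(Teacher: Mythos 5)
First, a point of reference: the paper contains no internal proof of this lemma --- immediately after the statement it says ``See Lemma A.6 in \cite{AZ2} for the proof,'' so the honest comparison is with the Androulidakis--Zambon argument being cited. Your outline is in the same spirit as that argument: correct the isotopy $\Phi^s_X$ by slice-repositioning flows so that it preserves $S_x$ for \emph{every} $s$, not just $s=1$; differentiate the corrected isotopy $\psi_s=\Phi^1_{Z(s)}\circ\Phi^s_X|_{S_x}$ in $s$ to obtain a time-dependent generator lying in $I_x\CF$ and tangent to $S_x$; then restrict to the slice. The soft parts of your sketch (the flows fix $x$; pushforwards preserve $\CF$ by Lemma~\ref{lem:preserves.foliation} and preserve the ideal $I_x$; the variation-of-parameters formula keeps everything inside $I_x\CF$) are fine.

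However, as a proof the proposal has genuine gaps, and they sit exactly at the two steps you name but do not carry out. (1) The parametrized version of Lemma~\ref{lem:slice}: as stated, that lemma produces for each fixed $s$ \emph{some} $Z\in I_x\CF$, with no smoothness in $s$ and no control at the endpoints. You need a family $Z(s)$ smooth in $s$ with $Z(1)=0$ (and also $Z(0)=0$, which you use implicitly when asserting $\psi_0=\mathrm{id}$ but never mention); obtaining this means reworking the construction behind Lemma~\ref{lem:slice}, not invoking it, and smoothness of the choice as $s\to 1$, where $\Phi^s_X(S_x)$ merely approaches a subset of $S_x$, is precisely where care is required. (2) The restriction step, which you correctly diagnose as the crux: passing from ``$Y(s)\in I_x\CF$ and tangent to $S_x$ along $S_x$'' to ``$Y(s)|_{S_x}\in I_x\CF_{S_x}$.'' Your proposed remedy --- generators adapted to the splitting $T_xM=T_xS_x\oplus T_xL_x$ --- is essentially the local splitting theorem for singular foliations of Androulidakis--Skandalis \cite{AndrSk}, in whose product normal form $U\cong S_x\times\mathbb{R}^k$, with $\CF|_U$ generated by $\CF_{S_x}$ together with the leafwise coordinate fields, the step does become routine: writing $Y=\sum_i f_i\hat V_i+\sum_j g_j\,\partial_{y_j}$ with $f_i,g_j\in I_x$, tangency along $S_x$ forces $g_j|_{S_x}=0$, whence $Y|_{S_x}=\sum_i (f_i|_{S_x})V_i\in I_x\CF_{S_x}$. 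But the splitting theorem is itself a substantive result that you neither state precisely nor prove, so what you have is a correct plan --- apparently the same plan as the cited proof --- with the decisive technical ingredients left open rather than a complete argument.
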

See Lemma A.6 in \cite{AZ2} for the proof.
\begin{proof}(of Theorem~\ref{thm:holwd})
By Lemma~\ref{lem:slice} and Lemma~\ref{lem:preserves.foliation}, we know that for a given $\CF$-path $(X,p)$ it is always possible to find a $Z \in I_{p(1)} \CF$ which satisfies Definition~\ref{defi:holonomy}. Therefore, we only need to show the definition of $\Hol(X,p)$ does not depend on the choice of $Z$.

Suppose we have $Z$ and $Z'$ such that $\Phi_Z^1 \circ \Phi_X^1$ and $\Phi_{Z'}^1 \circ \Phi_X^1$ both yield well-defined functions $S_{p(0)} \to S_{p(1)}$. We are finished if we can show that the germ of
\begin{equation}\label{eqn:holonomy.wd}
(\Phi_{Z'}^1 \circ \Phi_X^1) \circ {(\Phi_Z^1 \circ \Phi_X^1)}^{-1} = \Phi_{Z'}^1 \circ (\Phi_Z^1)^{-1} \colon S_{p(1)} \to S_{p(1)}
\end{equation}
at $p(1)$ is an element of $\exp(I_{p(1)} \CF_{p(1)})$. Since $\Phi_{Z'}^1 \circ (\Phi_Z^1)^{-1}$ is the composition of two flows, there exists a time dependent element $\til Z$ of $\CF(M)$ which takes values in $I_{p(1)} \CF$ and such that $\Phi_{\til Z}^1 = \Phi^1_{Z'} \circ (\Phi^1_Z)^{-1}$.

By Lemma \ref{lem:slice2}, it follows that there exists a time-dependent element $\hat{Z}$ of $I_{p(1)} \CF_{p(1)}$ such that:
\[ \Phi_{\hat Z}^1 = \Phi_{\til Z}^1 = \Phi_{Z'}^1 \circ (\Phi_{Z}^1)^{-1} \]
which shows that the function (\ref{eqn:holonomy.wd}) is an element of $\exp( I_{p(1)} \CF_{p(1)})$.
\end{proof}
The argument in the preceding proof can adapted to establish a sufficient condition for trivial holonomy.
\begin{lem}\label{lem:trivial.holonomy}
Suppose $(X,p)$ is an $\CF$-path such that $X$ is a time-dependent element of $I_{p(0)} \CF$. Then the holonomy of $(X,p)$ is trivial. That is: $\Hol(X,p) = 1_{p(0)} \in \HT$.
\end{lem}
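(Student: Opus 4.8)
The plan is to adapt the argument from the proof of Theorem~\ref{thm:holwd}, reducing the statement to a single application of Lemma~\ref{lem:slice2}.

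First I would observe that the hypothesis forces the integral curve $p$ to be constant. Any element of $I_{p(0)} \CF$ vanishes as a vector field at $p(0)$, so $X(t)_{p(0)} = 0$ for all $t$. Hence the constant curve at $p(0)$ is an integral curve of $X$, and by uniqueness of integral curves $p(t) = p(0)$ for all $t$. Writing $x := p(0) = p(1)$, we have $S_{p(1)} = S_x$ and $I_{p(1)} \CF = I_x \CF$, so $\Hol(X,p)$ genuinely lives in the based component $\Diff_\CF(S_x,S_x) / \exp(I_x \CF_{S_x})$ where the identity $1_x$ makes sense.

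Next, following Definition~\ref{defi:holonomy}, I would fix a time-dependent $Z \in I_{p(1)}\CF = I_x \CF$ for which $\Phi_Z^1 \circ \Phi_X^1$ maps $S_x$ into $S_x$ near $x$. The key point is that this composition is itself the time-one flow of a single time-dependent element of $I_x \CF$: using Remark~\ref{rmk:flowvsconcat} one has $\Phi_Z^1 \circ \Phi_X^1 = \Phi_{Z \odot X}^1$, and the concatenated field $Z \odot X$ from Definition~\ref{defi:concat} still takes values in $I_x \CF$ because its two branches are scalar multiples (by the reparameterization factors $2r'(\cdot)$) of values of $X$ and $Z$, both of which lie in $I_x \CF$, and $I_x \CF$ is closed under multiplication by real scalars. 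Setting $W := Z \odot X$, we thus have $\Phi_W^1 = \Phi_Z^1 \circ \Phi_X^1$ with $W$ a time-dependent element of $I_x \CF$ satisfying $\Phi_W^1(S_x) \subset S_x$ near $x$.

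Finally, this is exactly the hypothesis of Lemma~\ref{lem:slice2}, which produces a time-dependent element $Y$ of $I_x \CF_{S_x}$ with $\Phi_W^1|_{S_x} = \Phi_Y^1$. Therefore $\Phi_Z^1 \circ \Phi_X^1|_{S_x} = \Phi_Y^1 \in \exp(I_x \CF_{S_x})$, so its class in $\HT$ is the identity $1_x$, giving $\Hol(X,p) = 1_{p(0)}$. The only delicate points are bookkeeping ones: checking that the concatenated time-dependent field remains in $I_x \CF$ (immediate once the definitions are unwound, since the reparameterization contributes only scalar factors) and that all the flows in play are defined on a common neighborhood of $x$. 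Everything else is a direct transcription of the reasoning already used to prove Theorem~\ref{thm:holwd}.
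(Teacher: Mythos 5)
Your proposal is correct and follows essentially the same route as the paper's own proof: choose $Z$ as in Definition~\ref{defi:holonomy}, realize $\Phi_Z^1 \circ \Phi_X^1$ as the time-one flow of a single time-dependent element of $I_{p(0)}\CF$, and conclude via Lemma~\ref{lem:slice2}. The only difference is that you spell out details the paper leaves implicit (that $p$ is constant, and that the composed flow arises from the concatenation $Z \odot X$, which stays in $I_{p(0)}\CF$), which is a welcome but inessential elaboration.
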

\begin{proof}
Let $Z$ be a time-dependent element satisfying Definition~\ref{defi:holonomy}. Then the holonomy of $(X,p)$ is the equivalence class of $\Phi_Z^1 \circ \Phi_X^1|_{S_{p(0)}}$ in $\HT$.
Since both $Z$ and $X$ take values in $I_{p(0)} \CF$, we know that $\Phi_Z^1 \circ \Phi_X^1$ is the flow of some time dependent element $\til Z$ of $I_{p(0)} \CF$.
By Lemma~\ref{lem:slice2} we can conclude that it is also the flow of a time dependent element of $I_{p(0)} \CF_{S_{p(0)}}$.
\end{proof}
\begin{thm}\label{thm:ind.slices}
The holonomy equivalence relation does not depend on the choice of slices used to construct $\HT$.
\end{thm}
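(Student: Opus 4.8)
The plan is to show that a change of slices induces a canonical isomorphism of holonomy transformation groupoids intertwining the two holonomy maps; since an isomorphism preserves fibers, the induced equivalence relations must coincide. Fix two systems of slices $\{S_x\}_{x\in M}$ and $\{S_x'\}_{x\in M}$, write $\HT, \HT'$ for the resulting groupoids and $\Hol, \Hol'$ for the associated holonomy maps. For each $x$, both $S_x$ and $S_x'$ are slices through $x$, so by Lemma~\ref{lem:slice} there is a $W_x \in I_x\CF$ whose time-one flow carries a neighborhood of $x$ in $S_x$ onto $S_x'$. Since $W_x$ vanishes at $x$, the germ $\sigma_x := \Phi^1_{W_x}|_{S_x} \colon S_x \to S_x'$ is a foliation-preserving diffeomorphism fixing $x$.

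First I would define $\Psi \colon \HT \to \HT'$ on the component over $(x,y)$ by $[\phi] \mapsto [\sigma_y \circ \phi \circ \sigma_x^{-1}]$ and check it is a well-defined groupoid isomorphism. Well-definedness is the only nontrivial point: conjugating a flow $\Phi^t_V$ by $\sigma_y$ produces the flow $\Phi^t_{(\sigma_y)_* V}$, and because $\sigma_y$ preserves $\CF$ and fixes $y$ it carries $I_y\CF_{S_y}$ bijectively onto $I_y\CF_{S_y'}$; hence $\sigma_y \exp(I_y\CF_{S_y})\sigma_y^{-1} = \exp(I_y\CF_{S_y'})$, so modifying $\phi$ by post-composition with an element of $\exp(I_y\CF_{S_y})$ leaves the target class unchanged. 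Compatibility with composition and units, and invertibility via $\sigma_x^{-1}$, are then immediate.

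The heart of the argument is to verify $\Psi \circ \Hol = \Hol'$. Write $a = p(0)$, $b = p(1)$ for an $\CF$-path $(X,p)$, and choose $Z, Z' \in I_b\CF$ realizing $\Hol(X,p) = [\Phi^1_Z \circ \Phi^1_X]$ and $\Hol'(X,p) = [\Phi^1_{Z'}\circ \Phi^1_X]$ as in Definition~\ref{defi:holonomy}. I must show that the germ
\[ \bigl(\Phi^1_{Z'}\circ \Phi^1_X\bigr) \circ \bigl(\sigma_b \circ \Phi^1_Z \circ \Phi^1_X \circ \sigma_a^{-1}\bigr)^{-1} \colon S_b' \to S_b' \]
lies in $\exp(I_b\CF_{S_b'})$. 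Expanding $\sigma_a = \Phi^1_{W_a}$, $\sigma_b = \Phi^1_{W_b}$ and regrouping, the middle factor $\Phi^1_X \circ \Phi^1_{W_a}\circ (\Phi^1_X)^{-1}$ is the flow of the pushforward $(\Phi^1_X)_* W_a$. The crucial observation, mirroring the proof of Theorem~\ref{thm:holwd}, is that $\Phi^1_X$ sends $a$ to $b$ (since $p$ is an integral curve of $X$) and preserves $\CF$ by Lemma~\ref{lem:preserves.foliation}, so it carries $I_a\CF$ into $I_b\CF$; thus $(\Phi^1_X)_* W_a \in I_b\CF$. The germ above is then a composition of flows of time-dependent elements of $I_b\CF$ (namely $Z'$, the pushforward $(\Phi^1_X)_*W_a$, and the time-reversals of $Z$ and $W_b$), hence itself the time-one flow of a single time-dependent $\til Z \in I_b\CF$; and by construction it restricts to a self-map of $S_b'$. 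Lemma~\ref{lem:slice2} then upgrades this to the flow of an element of $I_b\CF_{S_b'}$, i.e. an element of $\exp(I_b\CF_{S_b'})$, proving $\Psi(\Hol(X,p)) = \Hol'(X,p)$.

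Finally, since $\Psi$ is a bijection and $\Hol' = \Psi \circ \Hol$, two $\CF$-paths have equal $\Hol$-values if and only if they have equal $\Hol'$-values; hence $\Hol$ and $\Hol'$ have identical fibers and generate the same equivalence relation on $\CP(\CF)$. I expect the main obstacle to be the bookkeeping in the displayed computation: confirming that the regrouped germ genuinely restricts to a self-map of $S_b'$ so that Lemma~\ref{lem:slice2} applies, and tracking that every constituent flow takes values in $I_b\CF$ after the conjugation by $\Phi^1_X$.
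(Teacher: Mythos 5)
Your proof is correct, and its skeleton matches the paper's: both constructions build the same conjugation bijection $\Psi \colon \HT \to \HT'$ out of the flows supplied by Lemma~\ref{lem:slice}, establish the intertwining identity $\Hol' = \Psi \circ \Hol$, and conclude that the two holonomy maps have identical fibers, hence generate the same equivalence relation. The genuine difference lies in how the intertwining identity is verified. The paper reuses structural facts already in place: it writes $\Hol'(X,p) = \Hol'\bigl( (Z_{p(1)}, p(1)) \odot (X,p) \odot (-Z_{p(0)}, p(0)) \bigr)$, which follows from Lemma~\ref{lem:trivial.holonomy} (constant paths over $I_x\CF$ have trivial holonomy) combined with multiplicativity of $\Hol'$ under concatenation (via Remark~\ref{rmk:flowvsconcat}), and then identifies the right-hand side with $\Psi \circ \Hol(X,p)$, leaving the underlying germ computation implicit. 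You instead expand the germ $(\Phi^1_{Z'} \circ \Phi^1_X) \circ (\sigma_b \circ \Phi^1_Z \circ \Phi^1_X \circ \sigma_a^{-1})^{-1}$ directly, use Lemma~\ref{lem:preserves.foliation} to see that $(\Phi^1_X)_* W_a \in I_b \CF$, and invoke Lemma~\ref{lem:slice2} to place the result in $\exp(I_b \CF_{S'_b})$ --- in effect re-running the argument of Theorem~\ref{thm:holwd} inline instead of quoting Lemma~\ref{lem:trivial.holonomy}. The paper's route is shorter and foregrounds the conceptual point that holonomy is insensitive to concatenation with holonomically trivial paths; your route is more self-contained, avoids any appeal to multiplicativity of $\Hol$ under $\odot$, and makes explicit the bookkeeping that the paper's second equality suppresses. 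Both ultimately rest on the same two inputs: flows of elements of $\CF$ preserve $\CF$ (Lemma~\ref{lem:preserves.foliation}), and a flow of a time-dependent element of $I_x\CF$ that maps a slice into itself restricts to a flow of an element of $I_x\CF_{S_x}$ (Lemma~\ref{lem:slice2}).
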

\begin{proof}
Suppose we choose alternative slices $S'_x$ through each $x \in M$. Let $\HT'$ denote the holonomy transformation groupoid relative to these choices and $\Hol' \colon \CP(\CF) \to \HT'$ be the resulting holonomy map. By Lemma~\ref{lem:slice}, we know that for each $x \in M$ there exists a vector field $Z_x \in I_x \CF$ such that $\Phi^1_{Z_x}$ defines a germ of a diffeomorphism from $S_x$ to $S'_x$. Then for each $x,y \in M$ we get a bijection:
\[\Diff(S_x, S_y) \to \Diff(S'_x, S'_y) \qquad f \mapsto \Phi^1_{Z_y} \circ f \circ {(\Phi^1_{Z_x})}^{-1} \]
Since each $\Phi^1_{Z_x}$ preserves the foliation (Lemma~\ref{lem:preserves.foliation}), we obtain a bijection $\Psi \colon \HT \to \HT'$. For each $x \in M$, let $(Z_x, x)$ be the constant $\CF$-path. Then we have that for all $(X,p) \in \CP(\CF)$:
\[ \Hol'( X,p) = \Hol' \left( (Z_{p(1)}, p(1) ) \odot (X,p) \odot (-Z_{p(0)} , p(0)) \right) = \Psi \circ \Hol(X,p) \]
where we apply Lemma~\ref{lem:trivial.holonomy} for the first equality.
Therefore, the holonomy equivalence relations relative to $\Hol$ and $\Hol'$ are the same.
\end{proof}
\begin{defi}
The \emph{ holonomy groupoid}, denoted $\CH(\CF)$, is defined to be the set of holonomy equivalence classes of $\CP(\CF)$ equipped with the quotient diffeology.
\end{defi}
The set $\CH(\CF)$ is a groupoid over $M$ with product inherited from the concatenation operation on $\CF$-paths. Following Remark~\ref{rmk:flowvsconcat}, we know that this groupoid structure makes the injection $\CH(\CF) \into \HT$ a groupoid homomorphism. In this way, we can regard $\CH(\CF)$ as a (set-theoretic) subgroupoid of $\HT$.
\subsection{\texorpdfstring{$\CF$}{CF}-Homotopy}\label{subsec:homotopy}
To define the $\CF$-homotopy equivalence relation, we need to understand deformations of $\CF$-paths.
\begin{defi}
A \emph{ variation of $\CF$-paths} is a pair $(X,p)$ where $X$ and $p$ are smooth functions:
\[ X \colon [0,1]^2 \to \CF(M) \qquad p \colon [0,1]^2 \to M \]
such that for all $s_0 \in [0,1]$:
\[ (X|_{[0,1] \times \{ s_0 \}} , p|_{[0,1] \times \{ s_0 \}}) \in \CP(\CF) \]
and $p(0,s)$ does not depend on $s$.

Given such a variation $(X,p)$ we define the \emph{ complement} of $(X,p)$ to be the two-parameter vector field:
\[ \forall (t,s) \in [0,1]^2 \quad \forall x \in M \qquad Y(t,s)|_{\Phi^{t,s}_X (x)} := \left( \frac{\text{d}}{\text{d} u} \right)\bigg\rvert_{u = s} \Phi^{t,u}_X(x) \]
where $\Phi^{t,s}_X$ denotes the flow of $X$ in the $t$-direction.
\end{defi}
\begin{figure}[h]
\centering
\begin{tikzpicture}
\coordinate (TR) at (5,3);
\coordinate (BR) at (4.5,1);
\coordinate (L) at (0,2);
\coordinate (M) at (2.3,2);
\coordinate (MR) at (4.75,2);

\draw (L) to[out=45,in=140] (TR);
\draw (L) to[out=-45,in=180] (BR);
\draw (BR) to[out=90,in=270] (TR);

\filldraw[black] (L) circle (2pt) node[anchor=east] {$x$};
\filldraw[black] (BR) circle (2pt) node[anchor=west] {$\Phi^{1,0}_X(x)$};
\filldraw[black] (MR) circle (2pt) node[anchor=west] {$\Phi^{1,s}_X(x)$};
\filldraw[black] (TR) circle (2pt) node[anchor=west] {$\Phi^{1,1}_X(x)$};

\draw[->,black, thick] (M) -- ++(-10:0.5) node[anchor=north] {$X(t,s)$};
\draw[dashed] (L) to[out=40, in=170] (M) to [out=-10, in = 190] (MR);
\draw[->,black, thick] (M) -- ++(80:0.5) node[anchor=west] {$Y(t,s)$};
\end{tikzpicture}
\caption{An illustration of the relationship between a variation and its complement.}
\end{figure}
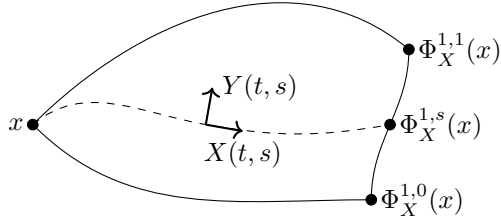
\begin{rem}
Since $p(\cdot, s)$ is an integral curve of $X(\cdot,s)$ for each fixed $s$, we know that $Y(t,s)|_{p(t,s)} = \frac{\text{d}}{\text{d} s} p(t,s)$. This implies that $p(t,s)$ is a classical boundary-preserving homotopy between the paths $p(t,0)$ and $p(t,1)$ if and only if $Y(1,s)|_{p(1,s)} = 0$ for all $s \in [0,1]$.
\end{rem}
\begin{lem}
Suppose $(X,p)$ is a variation of $\CF$-paths with complement $Y$. Then $Y \in C^\infty([0,1]^2,\CF(M))$.
\end{lem}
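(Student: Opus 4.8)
Given a variation of $\CF$-paths $(X,p)$ with complement $Y$, we must show that the complement, viewed as a map
\[ Y \colon [0,1]^2 \to \CF(M), \]
is smooth with respect to the coefficient diffeology on $\CF(M)$; that is, $Y \in C^\infty([0,1]^2, \CF(M))$. Two things need checking: first, that $Y(t,s)$ genuinely lands inside $\CF(M)$ (not merely in $\SX(M)$), and second, that the assignment $(t,s) \mapsto Y(t,s)$ is a plot for the coefficient diffeology.

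\textbf{The plan.} The key observation is that the complement is built from $X$ by a flow-and-differentiate operation, and Lemma~\ref{lem:preserves.foliation} already tells us that the flow of a time-dependent element of $\CF$ preserves $\CF(M)$ and does so smoothly in the coefficient diffeology. First I would fix a point $(t_0,s_0) \in [0,1]^2$ and a point $p_0 \in M$, since smoothness for the coefficient diffeology is a local condition in both the parameter domain and in $M$; hence I may assume $\CF(M)$ is finitely generated, with generators $\{X_i\}_{i=1}^n$, exactly as in the proof of Lemma~\ref{lem:preserves.foliation}. Next I would express $Y$ in terms of the pushforward operator. By the definition of the complement, $Y(t,s)$ is the generator of the $s$-flow conjugated by the $t$-flow; concretely, writing $\Phi^{t,s}_X$ for the $t$-flow of $X(\cdot,s)$, one has
\[ Y(t,s) = (\Phi^{t,s}_X)_* \left( \int_0^t (\Phi^{r,s}_X)^{-1}_* \, \partial_s X(r,s) \, \d{r} \right), \]
the standard formula for the dependence of a flow on a parameter (the variation-of-parameters / first-variation formula for time-dependent flows).

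\textbf{Carrying it out.} With this formula in hand, the proof reduces to three smoothness-and-closure facts, each of which follows from results already available. (1) The integrand involves $\partial_s X(r,s)$; since $X \colon [0,1]^2 \to \CF(M)$ is smooth and differentiation in the $s$-direction is a smooth operation on the coefficient diffeology (it is computed coefficient-wise against the fixed generators $X_i$ on $V$), $\partial_s X$ is again a smooth $\CF(M)$-valued map. (2) The pullback and pushforward operators $(\Phi^{r,s}_X)^{\pm 1}_*$ preserve $\CF(M)$ and depend smoothly on their arguments: this is precisely the content of Lemma~\ref{lem:preserves.foliation}, applied here with the parameter $s$ carried along (the lemma is proved by factoring the flow through the flow of a derivation $D^s$ on the trivial bundle $\SR^n$, and that derivation depends smoothly on $s$ as well as $t$). (3) Integration over $r \in [0,t]$ and the outer pushforward again preserve $\CF(M)$ and smoothness, because in the finitely-generated chart everything is expressed through the fixed lift $\Psi \colon \SR^n \to TM$, so the whole expression factors through the vector bundle $\SR^n$ exactly as in Lemma~\ref{lem:preserves.foliation}. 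Concatenating the factorizations shows $(t,s)\mapsto Y(t,s)$ locally factors through $\Gamma_{\SR^n}(V)$, which is the defining witness for a plot of the coefficient diffeology.

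\textbf{Main obstacle.} The delicate point is not the algebra of flows but making the variation-of-parameters formula legitimate in the coefficient diffeology rather than merely as time-dependent vector fields. Example~\ref{ex:bad.diffeology} warns that a map smooth as a family of vector fields need not be smooth for the coefficient diffeology, so I cannot simply differentiate the flow in the ambient space $\SX(M)$ and declare the result a $\CF(M)$-plot. The essential work is therefore to verify that each operation in the formula keeps us inside the category of coefficient-diffeology plots by reusing the derivation-flow factorization of Lemma~\ref{lem:preserves.foliation} uniformly in both parameters; once the derivation $(D^{t,s}, X(t,s))$ on $\SR^n$ is set up to depend smoothly on $(t,s)$, the flow $\Phi^{t,s}_{(D,X)}$ and its $s$-derivative live entirely on the fixed bundle $\SR^n$, and pushing forward by $\Psi$ transports the smooth $\SR^n$-family to a smooth $\CF(M)$-family. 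I expect the only genuinely new estimate to be the smoothness of the $\Gamma_{\SR^n}(V)$-valued first-variation integral in $(t,s)$, which is a routine consequence of smooth dependence of ODE solutions on parameters applied on the finite-rank bundle $\SR^n$.
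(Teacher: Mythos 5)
Your proposal takes essentially the same route as the paper's proof: the variation-of-parameters formula for $Y(t,s)$ (the paper's Equation~(\ref{eqn:variation})), combined with Lemma~\ref{lem:preserves.foliation} to handle the pushforwards $(\Phi^{t,s}_X)_*$ and coefficient-wise integration to handle the integral. Your extra care in carrying the parameter $s$ through the derivation-flow factorization, and in checking smoothness of $\partial_s X$, merely unfolds details the paper treats implicitly, so the arguments coincide.
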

\begin{proof}{}
Let $(\Phi^{t,s}_X)_* \colon \SX(M) \to \SX(M)$ be the push-forward along the flow (in the $t$-direction) of $X$. The variation of parameters formula yields an expression for $Y(t,s)$ in terms of $\Phi_X^{t,s}$:
\begin{equation}\label{eqn:variation}
Y(t,s) = (\Phi^{t,s}_X)_* \int\limits_{0}^t {(\Phi^{u,s}_X)}_*^{-1} \frac{ \text{d} X}{\text{d} s} (u,s) \text{d} u
\end{equation}
By Lemma~\ref{lem:preserves.foliation} we know that $(\Phi^{t,s}_X)_*$ defines a smooth function $[0,1]^2 \times \CF(M) \to \CF(M)$. Furthermore, integrals of smooth functions $[a,b] \to \CF(M)$ are smooth since we can integrate in each coefficient separately. Therefore, we conclude that Equation~(\ref{eqn:variation}) defines a smooth function $Y \colon [0,1]^2 \to \CF(M)$.
\end{proof}
\begin{defi}[Homotopy]\label{defi:mon}
Suppose $(X_0,p_0)$ and $(X_1,p_1)$ are $\CF$-paths. Then $(X_0,p_0)$ is said to be \emph{ $\CF$-homotopic} to $(X_1,p_1)$ if and only if there exists a variation $(X,p)$, with complement $Y$ such that
\[ (X,p)|_{s=i} = (X_i, p_i) \quad \text{ for } \quad i=0,1 \quad \text{ and } \]
\[ Y(1,s) \in I_{p(1,s)} \CF \qquad \forall s \in [0,1].\]

The diffeological space of all $\CF$-homotopy classes is called the \emph{ fundamental groupoid} of $\CF$ and is denoted $\Mon(\CF)$
\end{defi}
\begin{rem}
Another notion of homotopy for singular foliations appears in an article of Laurent-Gengoux, Lavau and Strobl\cite{SylvainArticle} and more recently in~\cite{LR19} using the language of \( Q \)-manifolds. At the moment we do not know the precise relationship between these two definitions but we suspect that our notion of homotopy is essentially a `truncation' of theirs.
\end{rem}
\begin{thm}\label{thm:hol.to.hom}
Suppose $(X_0,p_0)$ and $(X_1,p_1)$ are $\CF$-homotopic, then they have the same holonomy.
\end{thm}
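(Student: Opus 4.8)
The plan is to reduce the statement to the bookkeeping already carried out in the proof of Theorem~\ref{thm:holwd}, by using the complement $Y$ to compare the two endpoint flows $\Phi_X^{1,0}$ and $\Phi_X^{1,1}$. Fix a variation $(X,p)$ with complement $Y$ realizing the $\CF$-homotopy, so $(X,p)|_{s=i}=(X_i,p_i)$ and $Y(1,s)\in I_{p(1,s)}\CF$ for all $s$. First I would pin down the source and target. By definition of a variation, $p(0,s)=:x_0$ is independent of $s$. For the target, recall from the remark following Definition~\ref{defi:mon} that $\tfrac{d}{ds}p(1,s)=Y(1,s)|_{p(1,s)}$; since $Y(1,s)\in I_{p(1,s)}\CF$, every generator appearing in $Y(1,s)$ is multiplied by a function vanishing at $p(1,s)$, so $Y(1,s)|_{p(1,s)}=0$ and hence $p(1,s)=:y_0$ is also constant. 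This is the step that guarantees the two holonomies live in the same component $\Diff_\CF(S_{x_0},S_{y_0})/\exp(I_{y_0}\CF_{S_{y_0}})$ of $\HT$, without which the conclusion would not even typecheck.

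Next I would establish the key factorization. Writing $\psi_s:=\Phi_X^{1,s}$ for the family of germs of diffeomorphisms obtained from the endpoint flows, the definition of the complement gives $\tfrac{d}{ds}\psi_s(x)=Y(1,s)|_{\psi_s(x)}$. Thus $\psi_s=\Theta_s\circ\psi_0$, where $\Theta_s$ is the flow (in the $s$-variable) of the time-dependent vector field $s\mapsto Y(1,s)$; this is the standard ODE fact that a family of diffeomorphisms is recovered from its infinitesimal generator by left-translation of $\psi_0$. In particular $\Phi_X^{1,1}=\Theta_1\circ\Phi_X^{1,0}$. By the lemma preceding Definition~\ref{defi:mon}, $Y\in C^\infty([0,1]^2,\CF(M))$, so $s\mapsto Y(1,s)$ is a genuine time-dependent element of $\CF(M)$ taking values in $I_{y_0}\CF$; since these vanish at $y_0$, the point $y_0$ is an equilibrium and $\Theta_s$ exists on a neighborhood of $y_0$ for all $s\in[0,1]$, with $\Theta_1$ the time-one flow $\Phi^1_{Y(1,\cdot)}$ of an element of $I_{y_0}\CF$.

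Finally I would imitate the argument of Theorem~\ref{thm:holwd}. Choose time-dependent $Z_0,Z_1\in I_{y_0}\CF$ as in Definition~\ref{defi:holonomy}, so that $h_i:=\Phi_{Z_i}^1\circ\Phi_X^{1,i}|_{S_{x_0}}$ represents $\Hol(X_i,p_i)$. Using $\Phi_X^{1,1}=\Theta_1\circ\Phi_X^{1,0}$, the comparison germ is
\[
h_1\circ h_0^{-1}=\Phi_{Z_1}^1\circ\Theta_1\circ\Phi_X^{1,0}\circ(\Phi_X^{1,0})^{-1}\circ(\Phi_{Z_0}^1)^{-1}=\Phi_{Z_1}^1\circ\Theta_1\circ(\Phi_{Z_0}^1)^{-1}\colon S_{y_0}\to S_{y_0}.
\]
This is a composition of flows of time-dependent elements of $I_{y_0}\CF$ (note $(\Phi_{Z_0}^1)^{-1}$ is the flow of the time-reversed element $-Z_0(1-\cdot)$, still valued in $I_{y_0}\CF$), so by the compatibility of concatenation with flows (Remark~\ref{rmk:flowvsconcat}) it equals $\Phi_{\til Z}^1$ for a single time-dependent $\til Z$ valued in $I_{y_0}\CF$, and it maps $S_{y_0}$ into $S_{y_0}$ near $y_0$. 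Lemma~\ref{lem:slice2} then produces $\hat Z$ valued in $I_{y_0}\CF_{S_{y_0}}$ with $h_1\circ h_0^{-1}=\Phi_{\hat Z}^1\in\exp(I_{y_0}\CF_{S_{y_0}})$, exactly as in Theorem~\ref{thm:holwd} and Lemma~\ref{lem:trivial.holonomy}, so $h_0$ and $h_1$ represent the same class and $\Hol(X_0,p_0)=\Hol(X_1,p_1)$.

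I expect the main obstacle to be the second step: correctly interpreting the complement at $t=1$ as the generator of the $s$-deformation of the endpoint flow and making the factorization $\Phi_X^{1,1}=\Theta_1\circ\Phi_X^{1,0}$ rigorous at the level of germs (in particular controlling the common neighborhood of $y_0$ on which all the local flows are defined, which is where the equilibrium property $Y(1,s)\in I_{y_0}\CF$ is essential). Once this factorization is in hand, the remainder is a direct reuse of the $I_{y_0}\CF$-flow calculus already developed for the well-definedness of $\Hol$.
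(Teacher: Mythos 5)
Your proof is correct and follows essentially the same route as the paper's: both hinge on the factorization $\Phi^1_{X_1} = \Psi^{1,1}_Y \circ \Phi^1_{X_0}$, where $\Psi^{1,s}_Y$ is the $s$-flow of the complement at $t=1$, together with the observation that $Y(1,\cdot)$ is valued in $I_q\CF$ so that $\Psi^{1,1}_Y \in \exp(I_q\CF)$. The only difference is the final bookkeeping: the paper concludes by applying Lemma~\ref{lem:trivial.holonomy} to the constant $\CF$-path $(Y,q)$ and using compatibility of $\Hol$ with concatenation, whereas you unroll that step into the explicit germ comparison of Theorem~\ref{thm:holwd} via Lemma~\ref{lem:slice2}, which is an equivalent computation.
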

\begin{proof}
{}
Let $(X(t,s),p(t,s))$ be an $\CF$-homotopy from $(X_0,p_0)$ to $(X_1, p_1)$ with complement $Y(t,s)$. For convenience, let $q := p_0(1) = p_1(1)$. Let $\Phi_X^{t,s} \colon M \to M$ denote the flow of $X$ in the $t$-direction and let $\Psi_Y^{t,s}$ denote the flow of $Y$ in the $s$-direction. By assumption we have that:
\[ \Phi_X^{1,0} = \Phi_{X_0}^1 \quad \text{ and } \quad \Phi_X^{1,1} = \Phi_{X_1}^1 \]
On the other hand, we know that $\Phi_X^{1,s} = \Psi_Y^{1,s} \circ \Phi_X^{1,0}$ and we can therefore conclude that:
\[ \Psi^{1,1}_Y \circ \Phi_{X_0}^{1} = \Phi_{X_1}^{1} \]
By assumption, $Y(1,s)|_{q} \in I_{q} \CF$ and so $\Psi_Y^{1,1} \in \exp(I_{q} \CF)$. Let $(Y,q)$ denote the constant $\CF$-path corresponding to $Y$. By Lemma~\ref{lem:trivial.holonomy},
\[ \Hol(X_0,p_0) = \Hol( (Y, q) \odot (X_0,p_0) ) = \Hol(X_1, p_1) \]
\end{proof}
\section{Comparison to Crainic-Fernandes Integration}\label{section:crainicfernandes}
Our construction of the fundamental groupoid and the holonomy groupoid of a singular foliation bears a significant similarity to the construction of the integration of a Lie algebroid by Crainic and Fernandes~\cite{CrFeLie}.
In fact, the inspiration behind our construction of the fundamental groupoid came from the observation that the definition of algebroid homotopy used in the integration of Lie algebroids can be defined purely in terms of time-dependent sections, rather than paths. In this section we will make the analogy concrete by showing that the two constructions are equivalent when restricted to a single leaf.
\subsection{\texorpdfstring{$A$}{A}-homotopy}\label{sec:ahomotopy}
Let us recall the construction of the integration of a Lie algebroid via $A$-paths.
\begin{defi}
Let $\pi \colon A \to M$ be a Lie algebroid with anchor map $\rho \colon A \to TM$. An $A$-path is defined to be a smooth function $a \colon [0,1] \to A$ such that $\rho \circ a = \d{(\pi \circ a)} / \d t$.
\end{defi}
The set of all $A$-paths is denoted $\CP(A)$. It has a homotopy-like equivalence relation called $A$-homotopy\footnote{The definition of $A$-homotopy is slightly non-standard but one can see it is equivalent by examining the proof of Proposition~1.3 of \cite{CrFeLie}.}.
\begin{defi}
Let $a_0$ and $a_1$ be $A$-paths. A smooth map $a(t,s) \colon [0,1] \times [0,1] \to A$ is called an $A$-homotopy from $a_0$ to $a_1$ if the following properties hold:
\begin{itemize}
\item $\rho \circ a = \d (\pi \circ a) / \d {t}$
\item $a(t,i) = a_i(t)$ for $i = 0,1$
\item Any two parameter family of sections $\alpha(t,s)$ of $A$ which extends $a(t,s)$ has the property that the unique solution, $\beta(t,s)$, of the initial value problem:
\begin{equation}\label{eqn:ahomotopy}
\frac{\d{ \alpha}(t,s)} { \d{s}} - \frac{\d{ \beta}(t,s) }{ \d{t}} = [\alpha(t,s) , \beta(t,s)] \qquad \beta(0,s) = 0
\end{equation}
satisfies $\beta(1,s)|_{(\pi \circ a)(1,s)} = 0$.
\end{itemize}
\end{defi}
\begin{lem}\label{eqn:tangentahomotopy}
Suppose $\alpha(t,s)$ is a two-parameter family of vector fields on $M$ and let $\Phi_\alpha^{t,s} \colon M \to M$ denote the flow of $\alpha$ in the $t$-direction. Then the two parameter family of vector fields
\[\beta(t,s) := \frac{\d{ \Phi}^{t,s}_\alpha }{\d{s}} \]
satisfies Equation~(\ref{eqn:ahomotopy}).
\end{lem}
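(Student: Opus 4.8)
The plan is to verify Equation~(\ref{eqn:ahomotopy}) by a direct computation from the defining ordinary differential equation of the flow, carried out in a local chart. The initial condition is immediate: since $\Phi^{t,s}_\alpha$ restricts to $\mathrm{id}_M$ at $t=0$ for every value of $s$, differentiating in $s$ gives $\beta(0,s) = 0$. The substance of the lemma is therefore the identity $\partial_s \alpha - \partial_t \beta = [\alpha,\beta]$.

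First I would recall that, for each fixed $s$, the curve $t \mapsto \Phi^{t,s}_\alpha(x)$ is an integral curve of the time-dependent vector field $\alpha(\cdot,s)$, so that $\partial_t \Phi^{t,s}_\alpha(x) = \alpha(t,s)\big|_{\Phi^{t,s}_\alpha(x)}$. Fixing a point $x$ and a chart, I set $F(t,s) := \Phi^{t,s}_\alpha(x)$ and regard $\alpha$ as a $\mathbb{R}^n$-valued function in the chart, so that this reads $\partial_t F = \alpha(t,s,F)$. Differentiating in $s$ and invoking equality of mixed partials yields
\[ \partial_t \partial_s F = (\partial_s \alpha)(t,s,F) + D_x\alpha(t,s,F)\cdot \partial_s F, \]
where $D_x\alpha$ denotes the spatial Jacobian of $\alpha$.

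Next I would unwind the definition $\beta(t,s)\big|_{F(t,s)} = \partial_s F(t,s)$. Differentiating this identity in $t$ — carefully, since the base point $F(t,s)$ itself moves with $t$ — the chain rule gives $(\partial_t\beta)(t,s,F) + D_x\beta(t,s,F)\cdot \partial_t F = \partial_t\partial_s F$. Substituting $\partial_t F = \alpha(t,s,F)$, the previous display, and $\partial_s F = \beta(t,s,F)$, I obtain
\[ (\partial_t\beta)(t,s,F) = (\partial_s\alpha)(t,s,F) + D_x\alpha\cdot\beta - D_x\beta\cdot\alpha, \]
with all terms on the right evaluated at $F(t,s)$. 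Recognizing the coordinate expression $[\alpha,\beta] = D_x\beta\cdot\alpha - D_x\alpha\cdot\beta$ for the Lie bracket, this rearranges to exactly $\partial_s\alpha - \partial_t\beta = [\alpha,\beta]$ at the point $F(t,s) = \Phi^{t,s}_\alpha(x)$.

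Finally, since $x$ ranges over a neighborhood and, for fixed $(t,s)$, the map $x \mapsto \Phi^{t,s}_\alpha(x)$ is a diffeomorphism onto its image, the identity holds not merely along a single integral curve but as an equality of vector fields on the relevant open set, and patching over charts gives the global statement. The step I expect to be the main obstacle is the bookkeeping in the third paragraph: because $\beta$ is defined only implicitly through its values at the moving base point $\Phi^{t,s}_\alpha(x)$, differentiating it in $t$ forces the chain-rule correction $D_x\beta\cdot\partial_t F$, and it is precisely this term, together with the term $D_x\alpha\cdot\partial_s F$ from differentiating the flow equation, that assembles into the bracket. A cleaner but equivalent route would be to differentiate the variation-of-parameters formula~(\ref{eqn:variation}) directly; I would nonetheless keep the coordinate computation as the primary argument, since it makes the emergence of $[\alpha,\beta]$ most transparent.
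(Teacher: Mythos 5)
Your proposal is correct, and it coincides with the paper's approach: the paper explicitly omits the proof of this lemma as ``a direct calculation,'' and your chart computation (differentiating the flow equation in $s$, differentiating the defining relation $\beta(t,s)|_{\Phi^{t,s}_\alpha(x)} = \partial_s \Phi^{t,s}_\alpha(x)$ in $t$ with the chain-rule correction, and matching mixed partials to assemble the coordinate bracket $[\alpha,\beta] = D_x\beta\cdot\alpha - D_x\alpha\cdot\beta$) is precisely that calculation, carried out correctly, including the initial condition $\beta(0,s)=0$ and the passage from points $\Phi^{t,s}_\alpha(x)$ to an identity of vector fields.
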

\begin{rem}
The proof of proceeding lemma is a direct calculation, which we omit. It is the main technical observation which shows that $TM$-homotopy coincides with classical endpoint preserving homotopies.
\end{rem}
\subsection{Leafwise Integration and \texorpdfstring{$\Mon(\CF)$}{Mon(F)}}
To see the relationship between Lie algebroids and foliations, we return to the topic of the fiberspace:
\[ \pi \colon A(\CF) \to M \]
from Definition~\ref{defi:fiberdiffeology}. In this section we will mildly abbreviate our notation for readability:
\[ A := A(\CF) \quad \text{ and } \quad A_L := \pi^{-1}(L) \]
For an arbitrary leaf $L$ of $M$, recall that $A_L$ is a Lie algebroid (Lemma~\ref{lem:ALisalgebroid}). Even better, we learned from Debord~\cite{Debord2013} that $A_L$ is always an integrable Lie algebroid.

Let $\Mon(\CF)_L$ denote the subgroupoid of $\Mon(\CF)$ of elements whose source is inside of a given leaf $L$. We would like to compare $\Mon(\CF)_L$ with $\mathcal{G}(A_L)$. Indeed, it turns out that they are the same.
\begin{thm}\label{thm:fundamentalgrpd}
The diffeological groupoids $\Mon(\CF)_L$ and $\mathcal{G}(A_L)$ are naturally diffeomorphic. In particular, $\Mon(\CF)_L$ is smooth.
\end{thm}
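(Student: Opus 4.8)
The plan is to construct an explicit comparison map at the level of paths and to show it descends to the claimed diffeomorphism of groupoids. Given an $\CF$-path $(X,p)$ whose source $p(0)$ lies in $L$, the flow of $X$ preserves the leaf $L$ (leaves are invariant under flows of $\CF$), so $p(t) \in L$ for all $t$, and I would define
\[ \Psi \colon \CP(\CF)_L \to \CP(A_L), \qquad \Psi(X,p)(t) := \ev_{p(t)}(X(t)). \]
Using the anchor identity $\rho(\ev_q(X)) = X_q$ from Lemma~\ref{lem:ALisalgebroid} together with the fact that $p$ is an integral curve of $X$, one checks $\rho(\Psi(X,p)(t)) = X(t)|_{p(t)} = \dot p(t) = \tfrac{\d{}}{\d t}(\pi \circ \Psi(X,p))(t)$, so $\Psi(X,p)$ is genuinely an $A_L$-path. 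Smoothness of $\Psi$ with respect to the diffeologies is immediate, since $\ev$ is smooth and $t \mapsto (X(t),p(t))$ is a plot of $\CF(M) \times M$.

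The first substantive step is surjectivity of $\Psi$, i.e. lifting an $A_L$-path to an $\CF$-path. Given $a \in \CP(A_L)$ with base curve $p := \pi \circ a$, it suffices to find a time-dependent global section $X \colon [0,1] \to \CF(M)$ with $\ev_{p(t)}(X(t)) = a(t)$; the integral-curve condition $\dot p = X(t)|_{p(t)}$ then follows automatically from $\rho \circ a = \dot p$. I would build $X$ by covering the compact set $p([0,1])$ by finitely many opens on which $\CF$ is finitely generated, expressing $a(t)$ in these local generators with smooth coefficients, and gluing the products of the generators with a subordinate partition of unity into honest global elements of $\CF(M)$. Carrying this construction out smoothly in an auxiliary plot parameter shows more, namely that $\Psi$ is a subduction. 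This lifting is the main obstacle, as one must reconcile the global nature of $\CF(M)$ with local finite generation while retaining smoothness in all parameters.

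Next I would match the two equivalence relations. If $(X,p)$ is a variation witnessing an $\CF$-homotopy with complement $Y$, then Lemma~\ref{eqn:tangentahomotopy} applies to the underlying vector fields (the complement is literally $\tfrac{\d{}}{\d s}\Phi^{t,s}_X$) and shows that $X$ and $Y$ satisfy Equation~(\ref{eqn:ahomotopy}) with $Y(0,s)=0$; since $\CF \into \SX$ is an inclusion of modules, this identity also holds in $\CF(M)$, so $\alpha := S(X)$ and $\beta := S(Y)$ satisfy it in $\Gamma(A_L)$ because $S$ intertwines brackets and $t,s$-derivatives. The boundary condition translates exactly, because $Y(1,s) \in I_{p(1,s)}\CF$ is equivalent to $\beta(1,s)|_{p(1,s)} = \ev_{p(1,s)}(Y(1,s)) = 0$. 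For the converse, I would lift an $A$-homotopy $a(t,s)$ to a variation of $\CF$-paths and read the same equivalence backwards; the endpoints of this lifted variation agree with any prescribed lifts up to $\CF$-homotopy, since two lifts $(X,p)$ and $(X',p)$ of a single $A_L$-path satisfy $X(t)-X'(t) \in I_{p(t)}\CF$, and the linear interpolation $(1-s)X + sX'$ over the constant base curve $p$ is an $\CF$-homotopy whose complement vanishes along $p$.

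Finally, a routine check using the compatibility of concatenation with flows from Remark~\ref{rmk:flowvsconcat} shows that $\Psi$ intertwines concatenation, source and target, so the induced map $\bar\Psi \colon \Mon(\CF)_L \to \mathcal{G}(A_L)$ is a homomorphism of groupoids. Combining the previous steps, $\bar\Psi$ is a bijection, and being induced by a subduction it is a bijective subduction, hence a diffeomorphism of diffeological groupoids. Since $A_L$ is integrable (Debord~\cite{Debord2013}), $\mathcal{G}(A_L)$ is a Lie groupoid, and the diffeomorphism transports this smooth structure to $\Mon(\CF)_L$, yielding the final assertion.
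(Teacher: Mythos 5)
Your proposal follows the paper's proof essentially step for step: the same comparison map $(X,p)\mapsto \ev_{p(t)}(X(t))$, the same lifting/subduction argument via local finite generation and the coefficient diffeology (the paper's Claim 1), and the same two-sided matching of the equivalence relations (the paper's Claim 2), with the forward direction through Lemma~\ref{eqn:tangentahomotopy} and the backward direction by lifting an $A_L$-homotopy and comparing two lifts of a fixed $A_L$-path by linear interpolation. The only structural difference --- working over a single leaf from the start rather than over $\CP(A)=\bigsqcup_L \CP(A_L)$ and restricting at the end --- is immaterial.

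There is, however, one genuine gap, precisely at the linear-interpolation step. You assert that the variation $\bigl((1-s)X+sX',\,p\bigr)$ is ``an $\CF$-homotopy whose complement vanishes along $p$.'' Read literally, ``vanishes along $p$'' is the tangent-level statement $Y(t,s)|_{p(t)}=0$, which is automatic for any variation whose base curve is independent of $s$ --- but it is strictly weaker than what Definition~\ref{defi:mon} requires, namely $Y(1,s)\in I_{p(1)}\CF$, i.e.\ vanishing of the class of $Y(1,s)$ in the fiber $A(\CF)_{p(1)}$ rather than of its value as a tangent vector. The discrepancy between these two conditions is the kernel of $A(\CF)_{p(1)}\to T_{p(1)}L$, i.e.\ the isotropy of $A_L$, and it is exactly what separates $\Mon(\CF)$ from the naive homotopy groupoid of the leaves, so it cannot be elided at this point without trivializing the theorem's content. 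The module-level statement is true, but it requires an argument: by Equation~(\ref{eqn:variation}) one has $Y(1,s)=(\Phi^{1,s}_X)_*\int_0^1 {(\Phi^{u,s}_X)}_*^{-1}\bigl(X'(u)-X(u)\bigr)\,\mathrm{d}u$; since $X'(u)-X(u)\in I_{p(u)}\CF$ and, by Lemma~\ref{lem:preserves.foliation}, the flow pushforwards preserve $\CF$ and carry $I_{p(u)}\CF$ into $I_{p(0)}\CF$ (and then $I_{p(0)}\CF$ into $I_{p(1)}\CF$), the whole expression lies in $I_{p(1)}\CF$. This is exactly the paragraph the paper devotes to this step; with it inserted, your proof is complete and coincides with the paper's.
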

\begin{proof}
Let
\[ \CP(A) := \bigsqcup_{L \subseteq M} \CP(A_{L}) \subseteq C^\infty([0,1], A) \]
where the union is taken over all leaves $L$ of $\CF$. We equip $\CP(A)$ with the subset diffeology relative to $C^\infty([0,1], A)$. Two elements of $\CP(A)$ are said to be $A$-homotopic if they are $A_L$-homotopic for some leaf $L$.

Consider the smooth function
\[Q \colon \CP(\CF) \to \CP(A) \]
\[ Q(X, \gamma)(t) := \ev_{\gamma(t)} (X(t)) \]
The theorem follows from two claims:
\begin{enumerate}
\item $Q$ is a subduction.
\item Suppose $(X_0 , \gamma_0 )$ and $(X_1, \gamma_1)$ are $\CF$-paths. Then $(X_0 , \gamma_0 )$ is $\CF$-homotopic to $(X_1, \gamma_1)$ if and only if $Q(X_0 , \gamma_0 )$ is $A$-homotopic to $Q(X_1, \gamma_1)$.
\end{enumerate}
\emph{ Claim 1:} We need to show that plots on $\CP(A)$ can be lifted to plots on $\CP(\CF)$. Suppose $V$ is a subset of some euclidean space and $0 \in V$. Let $\phi \colon V \to \CP(A)$ be a plot. To prove the claim, we must show that there exists an open neighborhood $W$ of $0 \in V$ together with a lift $\til \phi \colon W \to \CP(\CF)$ such that $Q \circ \til \phi = \phi|_{V}$.

The smoothness of $\phi$ is equivalent to the smoothness of:
\[ a \colon V \times [0,1] \to A \qquad a(v,t) := \phi(v)(t) \]
Let
\[ \gamma := \pi \circ a \colon V \times [0,1] \to M \]
Since $[0,1]$ is compact, it follows that $\gamma(W \times [0,1])$ is contained inside of the interior of a compact subset of $M$. Therefore, we assume that $\CF(M)$ is finitely generated without loss of generality.

Let $\{ X_i \}_{i=1}^n \subseteq \CF(M)$ be a set of generators.
Using the coefficient diffeology on $\CF(M)$ and the compactness of the interval, we conclude that there exists an open neighborhood $W$ of $0 \in V$ together with smooth functions $\{ c^i \} \in C^\infty_{W \times [0,1]}$ such that for all $(w,t) \in W \times [0,1]$ we have that:
\[ a(w,t) = \sum \ev_{\pi \circ a(w,t)} (c^i (w,t) X_i) \]
Let $\til \phi(w) := (X_w, \gamma_w)$ where:
\[ X_w(t) := \sum c^i(w,t) X_i \qquad \gamma_w(t) := \pi \circ a(w,t) \]

\emph{ Claim 2:} $(\Rightarrow)$ Suppose $(X_0, \gamma_0)$ is $\CF$-homotopic to $(X_1, \gamma_1)$. Let $L$ be the unique leaf containing the image of $\gamma_1$ and $\gamma_2$. By assumption, there exists a variation $(X(t,s), \gamma(t,s))$ with complement $Y(t,s)$ such that $Y(1,s) \in I_{\gamma(1,s)} \CF$.
Lemma~\ref{eqn:tangentahomotopy} implies that:
\[ \frac{\d{X}(t,s)} { \d{s}} - \frac{\d{Y}(t,s) }{ \d{t}} = [X(t,s) , Y(t,s)] \quad \text{ and } \quad Y(0,s) = 0 \]
Let $\alpha(t,s)$ and $\beta(t,s)$ denote the two parameter sections of $A_L$ represented by $X$ and $Y$ respectively.
Since the algebroid bracket on $A_L$ comes from the bracket on $\CF$, it follows that $\alpha(t,s)$ and $\beta(t,s)$ satisfy Equation~\ref{eqn:ahomotopy}.
Furthermore, since $Y(1,s) \in I_{\gamma(1,s)} \CF$ it follows that $\beta(1,s) = 0$.
Therefore $\pi(X_0,\gamma_0)$ and $\pi(X_1, \gamma_1)$ are $A_L$-homotopic.

$(\Leftarrow)$ We will show the other direction in two parts. First, we will show that the fibers of $\pi$ are contained in the $\CF$-homotopy equivalence classes. Then we will show that an $A_L$-homotopy can be lifted to a $\CF$-homotopy between fibers.

Suppose $\pi(X_0,\gamma_0)=\pi(X_1, \gamma_1)$. In this case, $\gamma_0 = \gamma_1$. Let $X(t,s) := s X_1(t) + (1-s) X_0(t)$. We claim that $(X(t,s), \gamma)$ is a $\CF$-homotopy. Let $Y(t,s)$ denote the complement of this variation. Recall Equation~\ref{eqn:variation} for $Y$:
\[ Y(1,s) = (\Phi^{1,s}_X)_* \int\limits_{0}^1 {(\Phi^{u,s}_X)}_*^{-1} (X_1(u) - X_0(u)) \text{d} u \]
Since $(\Phi^{t,s}_X)_*$ preserves the foliation (Lemma~\ref{lem:preserves.foliation}), and since $(X_1(u) - X_0(u))|_{\gamma(u)} \in I_{\gamma(u)} \CF$. We can conclude that $Y(1,s) \in I_{\gamma(1)} \CF$. Which proves that $(X_0, \gamma)$ and $(X_1, \gamma)$ are homotopic.

Now we show that $A_L$ homotopies can be lifted to $\CF$-homotopies between fibers of $\pi$. Suppose $a(t,s)$ is an $A_L$ homotopy between some $A_L$-paths $a_0$ and $a_1$.
Let $\gamma(t,s)$ denote the base of this map. Using the argument from Claim 1, we can lift the map $a \colon [0,1]^2 \to A_L$ to a map $\alpha \colon [0,1]^2 \to \CF(M)$ (we can always lift the map locally and then patch together using a partition of unity). Let $\beta(t,s)$ be the complement of the associated variation $(\alpha, \gamma)$. Since we know $a(t,s)$ is an $A$-homotopy, it follows that $[\beta(1,s)]|_{A_\gamma(1,s)} = 0$ for all $s$. This implies that $\beta(1,s) \in I_{\gamma(1,s)} \CF$. Therefore $(X, \gamma)$ is a $\CF$-homotopy. Furthermore $\pi(X(t,0), \gamma(t,0)) = a_0$ and $\pi(X(t,1), \gamma(t,1)) = a_1$. This concludes Claim 2.
\end{proof}
This shows that our construction of $\Mon(\CF)$ is (leaf-wise) equivalent to performing a classical integration of $A$. An advantage of our construction of $\Mon(\CF)$ over constructing the integration separately over each leaf is the fact that $\Mon(\CF)$ inherits a diffeological structure which glues the leaf-wise integrations together.
\section{Comparison to the Androulidakis-Skandalis construction}\label{section:compareas}
A version of the holonomy groupoid originally appeared in a paper of Androulidakis and Skandalis~\cite{AndrSk}.
Their construction does not bear much resemblance to ours. The main result of this section (Theorem~\ref{thm:main}) is the fact that the construction in Section~\ref{subsec:holonomy} is equivalent (as a diffeological groupoid) to the Androulidakis-Skandalis construction. In order to prove this result, we will need to review the construction of Androulidakis and Skandalis.

It is in this section that we will most use the language of diffeology. In addition to providing us with a `smooth sense' in which our construction is the same as that of Androulidakis-Skandalis, the diffeology itself is ultimately crucial in our proof that the map we construct is a bijection. 
\subsection{Bisubmersions}
\begin{defi}
A \emph{bisubmersion} over $(M, \CF)$ is a manifold $P$ equipped with two submersions:
\[\begin{tikzcd}
V & P \arrow[swap]{l}{\mathbf{t}} \arrow{r}{\mathbf{s}"} & U
\end{tikzcd}\]
where $U$ and $V$ are open subsets of $M$. These maps must also be compatible with the foliation in the following sense:
\begin{equation}\label{eqn:bisubmersioncondition}
\mathbf{t}^{-1}(\CF) = \Gamma(\ker \d { \mathbf{t}}) + \Gamma(\ker \d{\mathbf{s}}) = \mathbf{s}^{-1}(\CF)
\end{equation}
A \emph{bisection} of $P$ is defined to be a smooth map $\sigma \colon U \to P$ such that $\mathbf{s} \circ \sigma = 1_U$ and $\mathbf{t} \circ \sigma \colon U \to V$ is a diffeomorphism.\end{defi}
A bisection $\sigma$ is said to be through a point $p \in P$ if $p$ is in the image of $\sigma$. In such a case, we say that $p \in P$ carries the diffeomorphism $\mathbf{t} \circ \sigma$.

In practice, we are mainly interested in \emph{local} bisections. That is, bisections for which the domain $U$ is small enough. Indeed, we can see that if we permit shrinking the open set $U$, one can always find a bisection through a given point $p \in P$.
\begin{ex}[Path Holonomy Bisubmersion]
Let $x \in M$ and suppose $\{ X_i \}^q_{i =1}$ is a set of compactly supported elements of $\CF$ which represent a basis for $A_x := \CF / I_x \CF$. Then let $\mathbf{t} \colon \mathbb{R}^q \times M \to M$ be the map defined as below:
\[ \mathbf{t} (c_1, \ldots , c_q , y) \mapsto \Phi^1_{\sum_{i=1}^q c_i X_i} (y) \]
and let $s \colon \mathbb{R^q} \times M \to M$ be the projection to the second component.
Then there exists an open neighborhood $P \subseteq \mathbb{R}^q \times M$ which contains $(0, \ldots ,0, x)$ such that:
\[
\begin{tikzcd} M \supset V & P \arrow[swap]{l}{\mathbf{t}} \arrow{r}{\mathbf{s}} & U \subseteq M \end{tikzcd}
\]
is a bisubmersion.
\end{ex}
It is possible to compose bisections via a fiber product operation:
\[
\begin{tikzcd}[row sep = small]
& &Q \times_{U_2} P \arrow{dl} \arrow{dr} & &\\
&Q \arrow{dr} \arrow{dl} & & P \arrow{dr} \arrow{dl} & \\
U_3 & & U_2 & & U_1
\end{tikzcd}
\]
Let $\PHB$ be the disjoint union of all compositions of path holonomy bisubmersions. The manifold $\PHB$ also has a monoidal structure since given $p \in P \subseteq \PHB$ and $q \in Q \subseteq \PHB$ such that $\mathbf{s}(p) = \mathbf{t}(q)$ we can define $p \cdot q := (p,q) \in P \times_M Q \subseteq \PHB$.
\begin{defi}
The holonomy groupoid of Androulidakis and Skandalis, denoted $\HolBis$ is defined to be $\PHB / \sim$ where $p \sim q$ if and only if they carry the same diffeomorphism. The source and target maps of the groupoid are inherited from those of $\PHB$.
\end{defi}
\newcommand{\AS}{\CH(\CF)^{AS}}
\newcommand{\GV}{\CH(\CF)^{GV}}
For the sake of clarity, we will denote $\CF$-paths up to holonomy by $\CH(\CF)^{GV}$ to distinguish it from $\CH(\CF)^{AS}$.

In section~\ref{subsec:holonomy}, we defined $\GV$ by associating holonomy transformations to $\CF$-paths. Before our work, Androulidakis and Zambon~\cite{AZ2} showed that it was possible to define holonomy transformations for $\CH(\CF)^{AS}$ as well. We now briefly summarize their results.
Suppose at each $x \in M$ we have chosen a slice $S_x$ and formed the corresponding groupoid of holonomy transformations $\HT$. Suppose $p \in \PHB$ is a point in a bisubmersion $P$.
Let $f \colon U \to V$ be a locally defined diffeomorphism carried by $p$ then \emph{holonomy} of $p$, written $\Hol^{AZ}(p)$ is defined to be the class of $f$ in $\HT$.
\begin{thm}[Androulidakis-Zambon]
The map $\Hol^{AZ} \colon \PHB \to \HT$ is well defined and the fibers of $\Hol^{AS}$ are precisely the fibers of the projection $\PHB \to \AS$.
\end{thm}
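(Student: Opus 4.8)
The plan is to establish the two assertions separately, reducing both to the structural behaviour of bisections of a bisubmersion together with the slice lemmas already in hand (Lemma~\ref{lem:slice} and Lemma~\ref{lem:slice2}). Throughout I fix $p\in\PHB$ and write $x=\mathbf{s}(p)$, $y=\mathbf{t}(p)$. Any bisection $\sigma$ through $p$ produces a foliation-preserving local diffeomorphism $f=\mathbf{t}\circ\sigma$ with $f(x)=y$, and exactly as in Definition~\ref{defi:holonomy} one corrects $f$ by a flow $\Phi_Z^1$ with $Z\in I_y\CF$ so that $\Phi_Z^1\circ f$ carries $S_x$ into $S_y$, setting $\Hol^{AZ}(p)=[\Phi_Z^1\circ f|_{S_x}]$. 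There are thus two choices to eliminate for well-definedness: the correction $Z$ and the bisection $\sigma$.

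Independence of $Z$ is literally the argument of Theorem~\ref{thm:holwd}: two admissible corrections differ by a flow of $I_y\CF$ preserving $S_y$, which by Lemma~\ref{lem:slice2} lies in $\exp(I_y\CF_{S_y})$. For independence of $\sigma$ I would first prove the key structural lemma: if $\sigma_0,\sigma_1$ are bisections through $p$, then $(\mathbf{t}\circ\sigma_1)\circ(\mathbf{t}\circ\sigma_0)^{-1}\in\exp(I_y\CF)$. To see this, interpolate through bisections $\sigma_r$; since $\mathbf{s}\circ\sigma_r=1$ for every $r$, the derivative $\tfrac{d}{dr}\sigma_r$ lies in $\ker\d{\mathbf{s}}$, and the bisubmersion condition~(\ref{eqn:bisubmersioncondition}) forces $\tfrac{d}{dr}(\mathbf{t}\circ\sigma_r)$ to be a time-dependent element of $\CF$; because every $\sigma_r$ passes through $p$, this element vanishes at $y$. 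Feeding $f_1=\Phi_Z^1\circ f_0$ with $\Phi_Z^1\in\exp(I_y\CF)$ into the same Lemma~\ref{lem:slice2} computation yields $[f_0]=[f_1]$, so $\Hol^{AZ}$ is well defined.

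For the fibers, observe that $\Hol^{AZ}$ is a morphism of groupoids (carried diffeomorphisms compose) and that $\AS=\PHB/\!\sim$ is the quotient groupoid. Hence it is enough to show that $\Hol^{AZ}(r)=1$ if and only if $r$ lies in the unit class of $\AS$, and then to apply this to $r=p\cdot q^{-1}$, which fixes $y$ and carries $\psi:=f\circ g^{-1}$ whenever $p,q$ carry $f,g$. One direction is immediate from the computation above: if $r$ is equivalent to a unit $(0,y)$ of a path-holonomy bisubmersion, then $r$ carries a diffeomorphism in $\exp(I_y\CF)$, whose class in $\HT$ is trivial.

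The reverse implication is the heart of the matter and the step I expect to require the most care. Suppose $\Hol^{AZ}(r)=1$ and let $\psi$ be a diffeomorphism carried by $r$, so $\psi$ fixes $y$. Triviality of $[\psi]$ means that after correcting to the slice, $\psi|_{S_y}$ agrees with a flow $\Phi_W^1$ with $W\in I_y\CF_{S_y}$; absorbing the correction into some $\chi\in\exp(I_y\CF)$ gives the transverse identity $\psi|_{S_y}=\chi|_{S_y}$. The crucial move is to upgrade this to a global statement: $\chi^{-1}\circ\psi$ is foliation-preserving and restricts to the identity on $S_y$, and here I would invoke the companion slice lemma of~\cite{AZ2} (the converse to Lemma~\ref{lem:slice2}) to conclude that any such diffeomorphism is the time-one flow of a time-dependent element of $\CF$ vanishing on $S_y$, hence in $\exp(I_y\CF)$. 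Therefore $\psi\in\exp(I_y\CF)$. Since every such flow is carried by a suitable composition of path-holonomy bisubmersions based at $y$ whose image in $\AS$ is the unit $1_y$, and since $r$ carries this same $\psi$, the two carry a common diffeomorphism and $r\sim 1_y$. The delicate point throughout is precisely this passage from the germ of a foliation-preserving diffeomorphism on a transversal back to a genuine element of $\exp(\CF)$ on a full neighbourhood of $M$; this is where the finiteness built into $\CF$ and the slice lemmas of Androulidakis and Zambon are indispensable, and I expect it to be the main obstacle.
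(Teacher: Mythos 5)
First, a point of comparison that matters for this review: the paper itself contains \emph{no} proof of this statement --- it is imported wholesale from Androulidakis--Zambon \cite{AZ2} (the surrounding text says it is ``briefly summarizing their results''), so your proposal can only be judged on its own merits as a reconstruction of the argument in \cite{AZ2}. Your skeleton (well-definedness $=$ independence of the correction $Z$ plus independence of the bisection; fibers $=$ reduction via the groupoid structure to elements of trivial holonomy) is the right one, and the $Z$-independence step, which you correctly identify with the argument of Theorem~\ref{thm:holwd}, is fine. The first genuine gap is your ``key structural lemma''. Interpolating two bisections $\sigma_0,\sigma_1$ through $p$ need not produce bisections: in a chart adapted to $\mathbf{s}$ the interpolation is a family of sections of $\mathbf{s}$ through $p$, but $\mathbf{t}\circ\sigma_r$ can fail to be a local diffeomorphism (already at $x$ its differential is a convex combination of two invertible matrices, which may be singular), and then the time-dependent vector field $\frac{d}{dr}(\mathbf{t}\circ\sigma_r)\circ(\mathbf{t}\circ\sigma_r)^{-1}$ you intend to integrate does not exist. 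Connectedness of the space of bisections through $p$ is not available a priori --- it is essentially equivalent to what you are trying to prove. In \cite{AndrSk} the corresponding statement (the diffeomorphisms carried by a fixed point form a single coset of $\exp(I_x\CF)$) is obtained instead by reducing, through morphisms of bisubmersions, to path-holonomy bisubmersions, where carried diffeomorphisms are explicit flows.

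The second gap is fatal as written. Your ``crucial move'' in the injectivity direction rests on the claim that a foliation-preserving local diffeomorphism which is the identity on the slice $S_y$ lies in $\exp(I_y\CF)$; you attribute this to a converse of Lemma~\ref{lem:slice2}, but no such lemma holds, and none appears in \cite{AZ2} --- the genuine converse of Lemma~\ref{lem:slice2} is only the extension statement that elements of $\exp(I_y\CF_{S_y})$ extend to elements of $\exp(I_y\CF)$, which is far weaker. Counterexample: let $\CF=\SX_M$ be the full foliation on $M=\RR^n$, so every slice is a point, $S_y=\{y\}$, and the slice condition is vacuous; any orientation-reversing diffeomorphism $g$ fixing $y$ preserves $\CF$ and is the identity on $S_y$, yet every element of $\exp(I_y\CF)$ is a finite composition of time-one flows of time-dependent vector fields vanishing at $y$, hence has differential at $y$ with positive determinant, so $g\notin\exp(I_y\CF)$. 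What rescues the theorem is exactly the information your argument discards at this step: $\psi$ is not an arbitrary foliation-preserving diffeomorphism but one carried by a point of $\PHB$, i.e.\ (up to the coset description above) a composition of flows of time-dependent elements of $\CF$. The statement that must be proved is that a \emph{flow-like} diffeomorphism which is slice-trivial lies in $\exp(I_y\CF)$, and establishing this --- by working with the bisubmersion point itself rather than with the abstract germ on the transversal --- is the actual technical heart of \cite{AZ2}. As proposed, the passage from germ data on the transversal back to $\exp(I_y\CF)$ cannot be completed.
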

A consequence of this result is another characterization of the equivalence relation on $\PHB$ which leads to $\AS$.
\subsection{Relating \texorpdfstring{$\AS$}{AS} and \texorpdfstring{$\GV$}{GV}}

To understand the relationship between these two constructions, we begin by defining a map:
\[ \til \Psi \colon \PHB \to \CP(\CF). \]
Suppose $(c_1 , \ldots c_q, x)$ is a point in a path holonomy bisubmersion $P \subseteq \mathbb{R}^q \times M$ with associated basis $\{ X_i \}_{i =1}^q$. Let:
\[ \til \Psi(c_1, \ldots c_q, x) := \left( \sum_{i = 1}^q c_i X_i , \,\Phi^t_{\sum_{i=1}^q c_i X_i} (x)\right). \]
If $p = (p_1, p_2) \in \PHB$ is an element of a bisubmersion $P$ which comes as a product $P = P_1 \times_M P_2$. Then we define $\til \Psi$ inductively as $\til \Psi(p_1) \odot \til \Psi(p_2)$ using the product of $\CF$-paths (Definition~\ref{defi:concat}).

\begin{thm}\label{thm:main}
The smooth map $\til\Psi \colon \PHB \to \CP(\CF)$ descends to a diffeological groupoid isomorphism:
\[ \Psi \colon \AS \to \GV \]
\end{thm}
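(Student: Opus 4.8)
The plan is to route both groupoids through the holonomy transformation groupoid $\HT$ and to exploit that each of $\AS$ and $\GV$ embeds into $\HT$: the former by the theorem of Androulidakis and Zambon (the fibers of $\Hol^{AZ} \colon \PHB \to \HT$ are exactly the fibers of $\PHB \to \AS$), the latter by construction, since the holonomy equivalence relation is the one generated by the fibers of $\Hol \colon \CP(\CF) \to \HT$, so that $\GV$ embeds into $\HT$ via $\Hol$. The key identity to establish first is the commuting triangle
\[ \Hol \circ \til\Psi = \Hol^{AZ} \colon \PHB \to \HT. \]
On a single path holonomy bisubmersion this is immediate: for $p = (c_1,\dots,c_q,x)$ the diffeomorphism carried by $p$ is $\Phi^1_{\sum_i c_i X_i}$, and by Definition~\ref{defi:holonomy} the holonomy of $\til\Psi(p) = (\sum_i c_i X_i, \Phi^t_{\sum_i c_i X_i}(x))$ is the class on slices of $\Phi^1_Z \circ \Phi^1_{\sum_i c_i X_i}$, which equals the class of $\Phi^1_{\sum_i c_i X_i}$ because the correction $\Phi^1_Z$ with $Z \in I_{p(1)}\CF$ is killed modulo $\exp(I_{p(1)}\CF_{S_{p(1)}})$. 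Since $\til\Psi$ sends products in $\PHB$ to concatenations of $\CF$-paths, and both $\Hol$ (Remark~\ref{rmk:flowvsconcat}) and $\Hol^{AZ}$ are multiplicative, the identity extends to all of $\PHB$ by induction on the length of a composition.

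Granting the triangle, well-definedness and injectivity are formal. If $p$ and $q$ carry the same diffeomorphism then $\Hol^{AZ}(p) = \Hol^{AZ}(q)$, hence $\Hol(\til\Psi(p)) = \Hol(\til\Psi(q))$, so $\til\Psi(p)$ and $\til\Psi(q)$ are holonomic; thus $\til\Psi$ descends to a map $\Psi \colon \AS \to \GV$. Conversely, if $\til\Psi(p)$ and $\til\Psi(q)$ have the same holonomy then $\Hol^{AZ}(p) = \Hol^{AZ}(q)$, so $p$ and $q$ agree in $\AS$ by the Androulidakis--Zambon theorem; hence $\Psi$ is injective. That $\Psi$ is a groupoid homomorphism is exactly the multiplicativity of $\til\Psi$ together with its compatibility with source and target, descended to the quotients.

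The substantive work is surjectivity together with the smoothness of $\Psi^{-1}$, and both reduce to a single lifting construction: given a plot $w \mapsto (X_w, p_w)$ into $\CP(\CF)$, produce (locally in $w$) a plot $w \mapsto b_w$ into $\PHB$ with $\til\Psi(b_w)$ holonomic to $(X_w, p_w)$. I would build this by subdividing $[0,1]$ using compactness of the image of $p_w$ and finite generation of $\CF$ on a neighborhood of each piece, writing $X_w(t) = \sum_i c^i(w,t) X_i^{(k)}$ in local generators (as in Claim~1 of the proof of Theorem~\ref{thm:fundamentalgrpd}), realizing each short-time piece by a bisection of a path holonomy bisubmersion whose coefficients depend smoothly on $w$, and composing these in $\PHB$. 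The resulting $b_w$ has $\til\Psi(b_w)$ holonomic to $(X_w,p_w)$ because replacing a time-dependent flow over a short interval by a composition of constant-coefficient flows alters the carried germ only by an element of $\exp(I\CF)$. Taking $w$ constant yields surjectivity of $\Psi$, while taking general $w$ and composing with the quotient $\PHB \to \AS$ exhibits $\Psi$ as a subduction. I expect this lifting step to be the main obstacle, since it requires controlling the interplay between the time subdivision, the change of local generators along $p_w$, and the $\exp(I\CF)$-ambiguity, all while preserving smooth dependence on $w$.

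Finally, a bijective subduction of diffeological spaces is a diffeomorphism, so $\Psi \colon \AS \to \GV$ is the desired isomorphism of diffeological groupoids; smoothness of $\Psi$ itself is automatic, since $\til\Psi$ is smooth and $\CP(\CF) \to \GV$ is a subduction.
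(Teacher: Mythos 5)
Your first half --- establishing the triangle $\Hol \circ \til\Psi = \Hol^{AZ}$ on generators of $\PHB$ and extending by multiplicativity, then reading off well-definedness, injectivity, and the homomorphism property from the two embeddings into $\HT$ --- is exactly the paper's argument (its first lemma and corollary following the statement of Theorem~\ref{thm:main}). The divergence, and the problem, is in the surjectivity/subduction step. The claim your lifting construction rests on, that replacing a time-dependent flow over a short interval by a composition of constant-coefficient flows ``alters the carried germ only by an element of $\exp(I\CF)$,'' is precisely the hard content of the theorem, and subdivision does not produce it: equivalence modulo $\exp(I_x \CF_{S_x})$ is an exact condition (the discrepancy must literally be the time-one flow of a time-dependent element of $I_x\CF$), not a condition stable under small perturbations. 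Making the subintervals short only makes each error ``small'' in a sense that this equivalence relation does not see, and the errors accumulate across the composition; no choice of constants such as $a^i(w) = \int_{t_k}^{t_{k+1}} c^i(w,t)\,dt$ will do. So the step you flag as ``the main obstacle'' is a genuine gap, not a technicality.

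The paper fills exactly this hole by a different mechanism, which you would need in some form. First, a lifting lemma: a parametrized time-dependent element $X(w,t)$ of $\CF(U_2)$ with $X(0,t)=0$ lifts to $\til X(w,t) \in \Gamma(\ker \d{\bs})$ on a path holonomy bisubmersion with $\til X(0,t)=0$; flowing the canonical bisection by $\til X$ produces a point of $\PHB$ through which a bisection carries the germ of the genuinely time-dependent flow $\Phi^1_{X(w,\cdot)}$, and the Androulidakis--Zambon theorem (all bisections through a given point carry the same holonomy transformation) then identifies this, for free, with the constant-coefficient holonomy that $\til\Psi$ assigns to that point --- this is how the comparison modulo $\exp(I\CF)$ is actually obtained, with no estimates. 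Second, the condition $\til X(0,t)=0$ is what guarantees existence of the time-one flow after shrinking the plot domain, which is why the paper lifts only plots centered at identities (Proposition~\ref{prop:subductionatid}) and then reaches plots centered anywhere in the image of $\Psi$ by translating with a bisection (Corollary~\ref{cor:plots.factor}). Third --- and this is where the paper sidesteps your global lifting problem entirely --- surjectivity is not proved by lifting arbitrary plots: the factorizations above make $\Psi$ open for the D-topologies, and an injective open homomorphism onto an open subgroupoid containing all units of the source-connected topological groupoid $\GV$ must be onto. If you want to salvage your more direct route, prove your key claim via the lifting lemma plus AZ well-definedness; once you do, the time subdivision becomes largely unnecessary and your argument collapses into the paper's.
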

We will split the proof into a series of lemmas.
\begin{lem}
The map $\til \Psi \colon \PHB \to \CP(\CF)$ is compatible with holonomy transformations, i.e. it makes the following diagram commute:
\[
\begin{tikzcd}
\PHB \arrow{d}{\Hol^{AZ}} \arrow{r}{\til\Psi} & \CP(\CF) \arrow{d}{\Hol} \\
\HT \arrow[equal]{r} & \HT
\end{tikzcd}
\]
\end{lem}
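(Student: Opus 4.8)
The plan is to verify the identity $\Hol \circ \til\Psi = \Hol^{AZ}$ directly and pointwise on $\PHB$. Because every element of $\PHB$ is a finite product of points lying in individual path holonomy bisubmersions, and because $\til\Psi$ is defined recursively with respect to this monoidal structure, I would argue by induction on the number of factors: first establish the identity for a point of a single path holonomy bisubmersion, and then propagate it through compositions using the multiplicativity of the two holonomy maps.

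For the base case, take a point $(c_1,\ldots,c_q,x)$ of a path holonomy bisubmersion $P \subset \RR^q \times M$ with associated basis $\{X_i\}_{i=1}^q$, and set $X := \sum_i c_i X_i$, regarded as a time-independent element of $\CF(M)$. By definition $\til\Psi(c,x) = (X,\gamma)$ with $\gamma(t) = \Phi^t_X(x)$, so $\gamma(0) = x$ and $\gamma(1) = \Phi^1_X(x) = \t(c,x)$. The bisection $\sigma(y) := (c,y)$ passes through $(c,x)$ and carries precisely the diffeomorphism $\t \circ \sigma = \Phi^1_X$; hence $\Hol^{AZ}(c,x)$ is the class in $\HT$ produced from $\Phi^1_X$ by the Androulidakis--Zambon correction recipe, namely one picks $Z \in I_{\gamma(1)}\CF$ so that $\Phi^1_Z \circ \Phi^1_X$ maps $S_x$ into $S_{\gamma(1)}$ and takes the class of $\Phi^1_Z \circ \Phi^1_X|_{S_x}$. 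This is word for word Definition~\ref{defi:holonomy} applied to $(X,\gamma)$, so the two recipes agree at $(c,x)$.

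For the inductive step, suppose $p = (p_1,p_2)$ lies in a composite $P_1 \times_M P_2 \subset \PHB$, so that $\til\Psi(p) = \til\Psi(p_1) \odot \til\Psi(p_2)$ by construction. The diffeomorphism carried by $p$ is the composite of those carried by $p_1$ and $p_2$, giving $\Hol^{AZ}(p) = \Hol^{AZ}(p_1) \cdot \Hol^{AZ}(p_2)$ in $\HT$. On the other side, the inclusion $\GV \into \HT$ is a groupoid homomorphism (established after Theorem~\ref{thm:ind.slices} via Remark~\ref{rmk:flowvsconcat}), so $\Hol$ turns the concatenation $\odot$ into the product of $\HT$; that is, $\Hol(\til\Psi(p_1) \odot \til\Psi(p_2)) = \Hol(\til\Psi(p_1)) \cdot \Hol(\til\Psi(p_2))$. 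The inductive hypothesis applied to $p_1$ and $p_2$ then closes the argument.

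I expect the only genuine subtlety to be in the base case: one must confirm that the ambiguity present in each of the two definitions --- the choice of correcting field $Z \in I_{\gamma(1)}\CF$ needed to land in the target slice --- is the same ambiguity, so that both maps return a single well-defined class in $\HT$. This is exactly the content of Theorem~\ref{thm:holwd}, which guarantees that the $\HT$-class of $\Phi^1_Z \circ \Phi^1_X|_{S_x}$ does not depend on $Z$. Once this independence is invoked, the base case is tautological, and the inductive step is purely formal given the multiplicativity of $\Hol$ with respect to $\odot$.
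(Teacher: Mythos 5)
Your proposal is correct and follows essentially the same route as the paper: reduce to generators of $\PHB$ via compatibility of $\til\Psi$, $\Hol^{AZ}$, and $\Hol$ with the monoidal structures, then check a point of a single path holonomy bisubmersion using the canonical bisection $\sigma(y) = (c_1,\ldots,c_q,y)$, which carries exactly the flow $\Phi^1_{\sum_i c_i X_i}$ also representing the holonomy of the $\CF$-path $\til\Psi(p)$. The only difference is presentational --- you spell out the induction and the appeal to Theorem~\ref{thm:holwd} for independence of the correcting field $Z$, which the paper leaves implicit.
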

\begin{proof}
First, observe that $\til \Psi$ is compatible with the monoidal structures on $\PHB$ and $\CP(\CF)$ by definition. Hence, we only need to show this diagram commutes for a set of elements which generate $\PHB$.

Let $p = (c_1, \ldots , c_q, x)$ be a point inside of a path holonomy bisubmersion $P \subseteq \mathbb{R}^q \times M$ associated to some set of local generators $\{ X_i \}_{i=1}^q$. There is a canonical bisection $\sigma(y) := (c_1, \ldots , c_q, y)$ which contains $p$. By the definition of $\bt \colon P \to M$ the diffeomorphism defined by $\sigma(y)$ around $x$ is precisely the flow $f := \Phi^1_{\sum_{i=1}^q c_i X_i}$ centered around $x$. Hence, the holonomy of $p$ is the class of $f$ in $\HT$.

However, $\til \Psi(p)$ is defined to be the $\CF$-path $(\sum_{i=1}^q c_i X_i , \gamma(t))$, where $\gamma$ is the integral curve of $\sum_{i=1}^q c_i X_i$ starting at $x$. Therefore the holonomy of $\til \Psi(p)$ is also represented by $f$.
\end{proof}
From this lemma, we can conclude the following:
\begin{cor}
The map $\til \Psi$ descends to an injective homomorphism of diffeological groupoids $\Psi \colon \AS \to \GV$.
\end{cor}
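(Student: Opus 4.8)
The plan is to establish the statement of the Corollary by combining the preceding Lemma with standard facts about diffeological quotients. The Lemma just proved shows that $\Hol \circ \til\Psi = \Hol^{AZ}$ as maps $\PHB \to \HT$. By the Androulidakis--Zambon theorem, the fibers of $\Hol^{AZ}$ are exactly the fibers of the projection $\PHB \to \AS$; and by our own Definition of holonomy equivalence, the fibers of $\Hol$ generate the holonomy equivalence relation on $\CP(\CF)$ whose quotient is $\GV$. So the commuting square factors $\til\Psi$ through the two quotients, and I need to (i) descend $\til\Psi$ to a well-defined map $\Psi\colon \AS \to \GV$, (ii) check it is smooth, (iii) check it is a groupoid homomorphism, and (iv) check it is injective.

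\emph{First} I would address well-definedness and injectivity together, since the Androulidakis--Zambon theorem makes them almost simultaneous. Two points $p, q \in \PHB$ satisfy $p \sim q$ (i.e.\ map to the same class in $\AS$) if and only if $\Hol^{AZ}(p) = \Hol^{AZ}(q)$, hence if and only if $\Hol(\til\Psi(p)) = \Hol(\til\Psi(q))$ by the Lemma. Now $\Hol(\til\Psi(p)) = \Hol(\til\Psi(q))$ means $\til\Psi(p)$ and $\til\Psi(q)$ lie in the same fiber of $\Hol$, hence in the same holonomy equivalence class, so they map to the same point of $\GV$. This gives a well-defined map $\Psi$ on classes. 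For injectivity, I run the same biconditional backwards: if $\til\Psi(p)$ and $\til\Psi(q)$ are holonomic then, since the holonomy equivalence relation on $\CP(\CF)$ is \emph{generated by} the fibers of $\Hol$ but in fact (because $\Hol$ is a groupoid homomorphism to $\HT$) two $\CF$-paths are holonomic iff they have equal holonomy, so $\Hol(\til\Psi(p)) = \Hol(\til\Psi(q))$, giving $\Hol^{AZ}(p) = \Hol^{AZ}(q)$ and thus $p \sim q$.

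\emph{Next}, for smoothness, I would use the universal property of the quotient diffeology. Since $\AS = \PHB/{\sim}$ carries the quotient diffeology, a map out of it is smooth precisely when its precomposition with the subduction $\PHB \to \AS$ is smooth. That precomposition is exactly the composite $\PHB \xrightarrow{\til\Psi} \CP(\CF) \to \GV$, which is smooth because $\til\Psi$ is smooth (stated in the theorem, being built from flows and the coefficient diffeology) and the quotient map $\CP(\CF) \to \GV$ is smooth by construction. For the homomorphism property, I would invoke that $\til\Psi$ is compatible with the monoidal structures on $\PHB$ and $\CP(\CF)$ by definition (already noted in the Lemma's proof), and that both equivalence relations are compatible with concatenation, so the induced map respects products, sources, and targets.

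\emph{The main obstacle} I anticipate is the precise logical step identifying ``same fiber of $\Hol$'' with ``holonomic'': the holonomy equivalence relation is \emph{defined} as the relation generated by the fibers of $\Hol$, so a priori it could be coarser than having equal holonomy. The resolution is that $\Hol$ is a groupoid homomorphism into $\HT$ (noted after the definition of $\CH(\CF)$), which forces its fibers to already be an equivalence relation, so the generated relation coincides with equality of holonomy. I would state this identification carefully, as it is the linchpin that makes both well-definedness and injectivity work, and it is precisely where the Androulidakis--Zambon characterization of the fibers of $\Hol^{AS}$ is needed to transport the conclusion back to the $\AS$ side.
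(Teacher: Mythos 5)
Your proof is correct and takes essentially the same route as the paper: descend $\til\Psi$ to $\Psi$ using the holonomy-compatibility lemma together with the Androulidakis--Zambon identification of the fibers of $\Hol^{AZ}$ with the fibers of $\PHB \to \AS$, obtain the homomorphism property from the monoidal compatibility of $\til\Psi$, and obtain injectivity from the fact that both $\AS$ and $\GV$ inject into $\HT$ compatibly (your explicit smoothness check via the quotient diffeology is a welcome addition the paper leaves implicit). One remark: the ``linchpin'' you single out is vacuous --- the fibers of \emph{any} map already partition its domain, so the equivalence relation generated by the fibers of $\Hol$ is automatically the relation of having equal holonomy; no appeal to $\Hol$ being a groupoid homomorphism into $\HT$ is needed for this identification.
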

\begin{proof}
If $p_1$ and $p_2 \in \PHB$ represent the same element of $\AS$, then they must have the same holonomy\cite{AZ2}. By the previous lemma, $\til \Psi(p_1)$ must have the same holonomy of $\til\Psi(p_2)$. Since $\til\Psi$ respects the holonomy equivalence relation, it defines a map $\Psi \colon \AS \to \GV$ on equivalence classes. $\Psi$ is a homomorphism since $\til \Psi$ is compatible with the monoidal structures on $\PHB$ and $\CP(\CF)$ that induce the product on $\AS$ and $\GV$, respectively. Lastly, $\Psi$ is injective since it fits into a commuting diagram:
\[
\begin{tikzcd}
\AS \arrow[hookrightarrow]{d} \arrow{r}{\Psi} & \GV \arrow[hookrightarrow]{d} \\
\HT \arrow[equal]{r} & \HT
\end{tikzcd}
\]
\end{proof}
Now that we know that $\Psi$ is injective. Our next goal is to show that $\Psi$ is a \emph{local subduction}~\cite{Diffeology}. More concretely, plots centered on points in the image of $\Psi$ locally factor through $\AS$. For that, we will need a lemma about lifting vector fields to bisubmersions.
\begin{lem}
Suppose \(\begin{tikzcd}[column sep=small] U_2 & P \arrow[swap]{l}{\bt} \arrow{r}{\bs} & U_1 \end{tikzcd}\) is a bisubmersion. Given a smooth function $X \colon V \times [0,1] \to \CF(U_2)$ for $V \subseteq \mathbb{R}^n$ such that $X(0,t) = 0$, there exists a smooth lift
\[ \til X \colon V \times [0,1] \to \Gamma(\ker \d{\bs}) \]
such that $\bt_* \til X = X$ and $\til X(0,t) = 0$.
\end{lem}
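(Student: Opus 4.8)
The plan is to first manufacture \emph{some} $\bs$-vertical vector field on $P$ that is $\bt$-related to $X$, and then to adjust it by a $v$-independent term so that it vanishes at $v = 0$. The only genuinely geometric ingredient is the defining identity of a bisubmersion, Equation~(\ref{eqn:bisubmersioncondition}); the rest is bookkeeping to keep track of smoothness and of $\bt$-projectability.

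For the first step I would reduce to the finitely generated case exactly as in the proof of Claim~1 above: since the problem is local in $M$ and $[0,1]$ is compact, after possibly shrinking $V$ around $0$ I may assume $\CF$ is generated by finitely many $Z_1, \ldots, Z_N \in \CF(U_2)$ and write
\[
X(v,t) = \sum_{j=1}^N c^j(v,t)\, Z_j, \qquad c^j \in C^\infty(V \times [0,1] \times U_2),
\]
with the $c^j$ smooth by the characterisation of the coefficient diffeology. Because $\bt$ is a submersion, each $Z_j$ admits a $\bt$-projectable lift $\hat Z_j$ on $P$ (for instance a horizontal lift with respect to a choice of complement to $\ker \d{\bt}$), and $\hat Z_j$ then lies in $\bt^{-1}(\CF)$. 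Applying Equation~(\ref{eqn:bisubmersioncondition}) I decompose $\hat Z_j = V_j + W_j$ with $V_j \in \Gamma(\ker \d{\bt})$ and $W_j \in \Gamma(\ker \d{\bs})$. The field $W_j = \hat Z_j - V_j$ is still $\bt$-related to $Z_j$ (the correction $V_j$ is $\bt$-vertical) while being $\bs$-vertical. Setting
\[
\til X_0(v,t) := \sum_{j=1}^N \bigl(c^j(v,t) \circ \bt\bigr)\, W_j
\]
gives a smooth family of sections of $\ker \d{\bs}$ — smoothness holds because the $W_j$ are fixed and the coefficients $c^j \circ \bt$ are smooth — and, since $\bt_*\bigl((c^j \circ \bt)\, W_j\bigr) = c^j Z_j$, we obtain $\bt_* \til X_0 = X$.

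The vanishing at $v = 0$ is then immediate. Because $X(0,t) = 0$ and $\til X_0$ is $\bt$-related to $X$, the section $\til X_0(0,t)$ satisfies $\bt_* \til X_0(0,t) = X(0,t) = 0$; it is moreover $\bs$-vertical. Hence
\[
\til X(v,t) := \til X_0(v,t) - \til X_0(0,t)
\]
(the second term regarded as constant in $v$) is again a smooth $\bs$-vertical family, still satisfies $\bt_* \til X = X - 0 = X$, and now has $\til X(0,t) = 0$, as required.

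The step requiring the most care is the assembly of $\til X_0$: one must ensure that the weights used to glue local data do not destroy $\bt$-projectability, which is why I work with globally defined generators and lift their coefficients by pulling them back along $\bt$, rather than gluing with an arbitrary partition of unity on $P$. The crucial input, however, is Equation~(\ref{eqn:bisubmersioncondition}): it is precisely this identity that upgrades an arbitrary $\bt$-projectable lift of a generator to one that is in addition tangent to the fibres of $\bs$.
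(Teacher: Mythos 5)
Your proof is correct and is essentially the paper's own argument: both express $X$ through finitely many generators with smooth coefficients (via the coefficient diffeology), use the bisubmersion identity $\bt^{-1}(\CF) = \Gamma(\ker \d{\bt}) + \Gamma(\ker \d{\bs})$ to produce $\bs$-vertical lifts of the generators that remain $\bt$-related to them (the paper states this as surjectivity of $\d{\bt} \colon \Gamma(\ker \d{\bs}) \to \bt^*\CF(U_2)$ as a module map, which you derive by hand via a horizontal lift plus the decomposition), and then assemble the lift as a sum weighted by the pulled-back coefficients. The only difference is cosmetic: the paper arranges $\til X(0,t) = 0$ by replacing $c_i$ with $c_i - c_i(0,t)$ before lifting, whereas you subtract $\til X_0(0,t)$ after lifting; both corrections are valid for the same reason, namely $X(0,t) = 0$.
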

\begin{proof}
From the definition of the diffeology on $\CF(U_2)$, we know that $X$ can be written in the form $X(a_1, \ldots, a_n, t) = \sum_{i=1}^q c_i(a_1, \ldots a_n,t) Y_i$ for some constant elements $Y_i \in \CF(U_2)$ and smooth functions $c_i \colon V \times [0,1] \to \mathbb{R}$. We can assume that $c_i(0,t) = 0$ for all $i$ since otherwise we can just replace each $c_i$ with $c_i - c_i(0,t)$ if necessary.

From the bisubmersion condition we know that $\bt^{-1}(\CF) = \Gamma(\ker \bt) + \Gamma(\ker \bs)$. Since $\bt^{-1}(\CF)$ is defined to be $(\d{\bt})^{-1}( \bt^* \CF(U_2))$, we know that $\d{\bt} \colon \Gamma(\ker \bs) \to \bt^* \CF(U_2)$ is a surjective morphism of $C^{\infty}_P$-modules. Therefore, we can choose $\til Y_i \in \Gamma(\ker \bs)$ such that $\bt_* \til Y_i = Y_i$.

Let $\til X(a_1 , \ldots, a_n, t) := \sum_{i=1}^n c_i(a_1, \ldots a_n,t) \til Y_i$. From the definition of $\til Y_i$, we obtain that $\til X$ satisfies the claim of the lemma.
\end{proof}
The main application of this lemma is the following proposition which says, roughly, that plots centered on identity elements of $\GV$ locally factor through $\Psi$.
\begin{prop}\label{prop:subductionatid}
Let $x \in M$. Suppose $\phi \colon V \to \GV$ is a plot such that $0 \in V$ and $\phi(0) = 1_x \in \GV$. Then there exists an open neighborhood of the origin, $V' \subseteq V$, and a plot $\psi \colon V' \to \AS$ such that $\Psi \circ \psi = \phi|_{V'}$.
\end{prop}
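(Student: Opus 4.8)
The plan is to use the subduction $\CP(\CF)\to\GV$ to reduce the statement to a concrete lifting problem on a single path holonomy bisubmersion, and then to lift a time-dependent section to that bisubmersion using the preceding lemma and flow it. Since $\GV$ carries the quotient diffeology, the projection $\pi\colon\CP(\CF)\to\GV$ is a subduction, so after shrinking $V$ we obtain a lift $\hat\phi\colon V\to\CP(\CF)$, $\hat\phi(v)=(X_v,\gamma_v)$, with $\pi\circ\hat\phi=\phi$. Because $\phi(0)=1_x$ and $c_x\in\pi^{-1}(1_x)$, I would arrange that $\hat\phi$ takes the constant $\CF$-path at $x$ as its value at the origin; equivalently $\gamma_0\equiv x$ and $X_0\equiv 0$. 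The point of this normalization is that, by continuity and compactness of $[0,1]$, after further shrinking $V$ the image $\gamma_v([0,1])$ stays inside an arbitrarily small neighborhood $U$ of $x$ for all $v\in V$. I may therefore fix a single path holonomy bisubmersion $\bt,\bs\colon P\to M$ centered at $x$, with generators $\{X_i\}_{i=1}^q$, whose domain contains $U$.

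Next, using the coefficient diffeology together with compactness of $[0,1]$, I write $X_v(t)=\sum_i c^i(v,t)X_i$ for smooth $c^i\colon V\times[0,1]\to\RR$, so that $X$ defines a smooth map $V\times[0,1]\to\CF(U)$ with $X(0,t)=0$. Applying the preceding lemma yields a lift $\til X\colon V\times[0,1]\to\ker\d{\bs}$ with $\bt_*\til X=X$ and $\til X(0,t)=0$. The defining property $\bt_*\til X=X$ means that $\bt\circ\Phi^{t}_{\til X_v}=\Phi^t_{X_v}\circ\bt$, while $\Phi^t_{\til X_v}$ preserves the fibers of $\bs$. Hence, flowing the identity bisection $y\mapsto(0,y)$ of $P$ by $\til X_v$ produces, at time one, a bisection through the point
\[ \hat\psi(v):=\Phi^1_{\til X_v}\bigl(0,\gamma_v(0)\bigr)\in P\subset\PHB, \]
which satisfies $\bs(\hat\psi(v))=\gamma_v(0)$, $\bt(\hat\psi(v))=\gamma_v(1)$, and carries the germ of $\Phi^1_{X_v}$ at $\gamma_v(0)$. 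Smooth dependence of flows on parameters shows that $\hat\psi\colon V\to\PHB$ is a plot, and $\til X_0=0$ gives $\hat\psi(0)=(0,x)$.

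Let $\psi\colon V\to\AS$ be the composite of $\hat\psi$ with the quotient $\PHB\to\AS$; it is a plot. It remains to check $\Psi\circ\psi=\phi$. Since $\hat\psi(v)$ carries $\Phi^1_{X_v}$, its Androulidakis--Zambon holonomy $\Hol^{AZ}(\hat\psi(v))$ is the class of $\Phi^1_{X_v}$ in $\HT$; on the other hand $\phi(v)=\Hol(X_v,\gamma_v)$ is the holonomy transformation of the same germ $\Phi^1_{X_v}$. By the lemma relating $\til\Psi$ to holonomy transformations we have $\Hol(\til\Psi(\hat\psi(v)))=\Hol^{AZ}(\hat\psi(v))$, so $\Psi\circ\psi$ and $\phi$ have the same image in $\HT$; since $\GV\hookrightarrow\HT$ is injective, this forces $\Psi\circ\psi=\phi$, as required.

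The hardest part is the normalization in the first step: arranging that the lift $\hat\phi$ takes the constant path as its value at $0$. This is exactly what confines the whole path family to a single path holonomy bisubmersion and makes the hypothesis $X(0,t)=0$ of the lifting lemma available; without it one would be forced to work with compositions of bisubmersions along a nonconstant base loop, where the same lemma no longer applies directly. The remaining verifications---that $\Phi^1_{\til X_v}$ is defined for all $v$ in a small enough $V$, that $\hat\psi$ is a plot, and that the carried germ is exactly $\Phi^1_{X_v}$---are then routine consequences of smooth dependence of flows on parameters together with the intertwining identity $\bt\circ\Phi^t_{\til X_v}=\Phi^t_{X_v}\circ\bt$.
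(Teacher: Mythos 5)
Your argument reproduces the paper's proof essentially step for step: lift the plot through the quotient subduction $\CP(\CF) \to \GV$, shrink the domain so that the whole lifted family lies over a single path holonomy bisubmersion, expand $X_v$ in generators and apply the preceding lemma to obtain $\til X$ tangent to the $\bs$-fibers with $\bt_* \til X = X$ and $\til X(0,t) = 0$, set $\hat\psi(v) := \Phi^1_{\til X_v}(0,\gamma_v(0))$, and conclude by observing that the flowed identity bisection through $\hat\psi(v)$ carries exactly $\Phi^1_{X_v}$, so the two holonomies in $\HT$ agree. This is the same construction, in the same order, with the same key lemma, as the paper's proof (the paper phrases the last step via the bisection $\sigma(a) = \Phi^1_{\til X(a,\cdot)} \sigma^{id}$, exactly as you do).

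The one step where your write-up outruns its justification is the normalization you yourself flag as the hardest part: arranging that the lift $\hat\phi$ sends $0$ to the constant $\CF$-path at $x$. The fact that the constant path lies in $\pi^{-1}(1_x)$ does not let you ``arrange'' this: a quotient map in diffeology is a subduction, which produces \emph{some} local lift, but it is not in general a \emph{local} subduction, and lifting through a prescribed point of a fiber is precisely the stronger property that the paper is careful to distinguish in its diffeology section. An arbitrary lift sends $0$ to some loop $(X_0,\gamma_0)$ at $x$ with trivial holonomy, and for such a lift both the confinement of the family to one bisubmersion (shrinking $V$ does not shrink the image of $\gamma_0$) and the hypothesis $X(0,t)=0$ of the lifting lemma fail, the latter being what guarantees existence of the time-one flow of $\til X_v$ for $v$ near $0$. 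In fairness, the paper's own proof silently makes the same assumption, so your proposal is a faithful reconstruction of the published argument, including this soft spot; but as written the step is an assertion, not a proof. One way to close it is to reduce to the normalized case: write $\Phi^t_{X_v} = \Phi^t_{X_0} \circ \Phi^t_{\til\Delta_v}$ where $\til\Delta_v(t) := \bigl( (\Phi^t_{X_0})^{-1} \bigr)_* \bigl( X_v(t) - X_0(t) \bigr)$, which takes values in $\CF$ by Lemma~\ref{lem:preserves.foliation} and vanishes at $v=0$; this factors $\phi$ into a plot admitting a normalized lift times a factor with fixed time-dependent field $X_0$, and the latter must then be shown to lift to $\PHB$ separately (for instance by chopping $\Phi^1_{X_0}$ into small-time flows, each carried by a path holonomy bisubmersion), after which one concludes by translation as in Corollary~\ref{cor:plots.factor}.
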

\begin{proof}
Suppose $\phi \colon V \to \GV$ is a plot as in the statement of the lemma. Let $W \subseteq V$ be a smaller neighborhood of the origin such that we can lift $\phi|_W$ to a plot $\til \phi \colon W \to \CP(\CF)$.
Let us write the components of $\til\phi$ in the following manner:
\[ \til\phi (a_1 , \ldots , a_n)(t) = \left(X(a_1, \ldots a_n , t), \gamma(a_1, \ldots, a_n ,t) \right) \]

Now suppose \(\begin{tikzcd}[column sep=small] U_2 & P \arrow[swap]{l}{\bt} \arrow{r}{\bs} & U_1 \end{tikzcd}\) is a path holonomy bisubmersion such that the image of $\gamma$ is contained in $U_2$. Such a path holonomy bisubmersion exists since we can make the image of $\gamma$ as small as we like by shrinking $W$. Furthermore, we can assume that the support of $X(a_1, \ldots , a_n, t)$ is uniformly contained in $U_2$.

The previous lemma implies that there exists $\til X \colon W \times [0,1] \to \ker \d{\bs} \le \mathfrak{X}(P)$ such that $\bt_* \til X = X|_{U_2}$ and such that $\til X(0,t) = 0$. By possibly shrinking $W$, further, we can ensure that the flow $\Phi^1_{\til X}$ is well defined in some neighborhood of $ \{ 0 \} \times U_1 \subseteq P$.

Let $\til \psi \colon W \to P \subseteq \PHB$ be defined by:
\[ \til \psi(a_1, \ldots, a_n) = \Phi^1_{\til X(a_1, \ldots , a_n, \cdot)} \left(0, \ldots , 0 , \gamma(a_1, \ldots a_n, 0) \right) \]
Such a function defines a plot $\psi \colon W \to \AS$. We claim that $\Psi \circ \psi = \phi|_W$. To show this, we will show that $\Hol \til \psi = \Hol \til\phi$.

For each $(a_1 , \ldots a_n)$, let $\sigma(a_1, \ldots , a_n) \colon U_1 \to P$ be the bisection defined by
\[ \sigma(a_1 , \ldots , a_n) := \Phi^1_{\til X(a_1 , \ldots , a_n , \cdot )} \sigma^{id} \]
Where $\sigma^{id}$ is the canonical bisection of $P$. From the definition of $\til X$, it follows that the diffeomorphism carried by $\sigma(a_1, \ldots , a_n)$ is precisely the flow of the time dependent vector field $X|_{\{a_1 \} \times \ldots \times \{a_n\} \times [0,1]}$. Furthermore, $\sigma(a_1 , \ldots, a_n)$ is a bisection through $\til \psi(a_1 , \ldots, a_n)$. This shows that the holonomy of $\til \psi(a_1 , \ldots , a_n)$ must be equal to the holonomy of $\til \phi(a_1 , \ldots a_n)$ which completes the proof.
\end{proof}
We can use the special case of a plot centered on an identity, to show that the preceding proposition holds for any plot which is centered around an element in the image of $\Psi$.
\begin{cor}\label{cor:plots.factor}
Let $x \in M$. Suppose $\phi \colon V \to \GV$ is a plot such that $0 \in V$ and $\phi(0) \in \Psi(\AS) \subseteq \GV$. Then there exists an open neighborhood of the origin, $V' \subseteq V$, and a plot $\psi \colon V' \to \AS$ such that $\Psi \circ \psi = \phi|_{V'}$.
\end{cor}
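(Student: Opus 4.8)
The plan is to reduce the general case to the identity case already settled in Proposition~\ref{prop:subductionatid}, by right-translating $\phi$ with a smoothly varying element of $\Psi(\AS)$ coming from a bisection. Write $g_0 := \phi(0) = \Psi(h_0)$ with $h_0 \in \AS$, and set $x := \s(g_0)$. Choose a point $\bar p_0 \in \PHB$ representing $h_0$, lying in some bisubmersion $P$, together with a local bisection $\sigma_0$ of $P$ through $\bar p_0$, with carried diffeomorphism $f = \bt \circ \sigma_0$. Since $\bs \circ \sigma_0 = \mathrm{id}$ and the projection $\PHB \to \AS$ is a subduction, the assignment $y \mapsto [\sigma_0(y)]$ is a plot into $\AS$ with source $y$ and target $f(y)$, whose value at $x$ is $h_0$.

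First I would shrink $V$ so that $\s(\phi(v))$ lies in $\mathrm{dom}(\sigma_0)$ for all $v$, and then form the right-translated family
\[ \phi'(v) := \phi(v) \odot \Psi\big([\sigma_0(\s(\phi(v)))]\big)^{-1}. \]
This concatenation is defined for every $v$ because $\Psi([\sigma_0(\s(\phi(v)))])$ has source $\s(\phi(v))$, and $\phi'$ is a plot because it is assembled from the plot $\phi$, the smooth source map $\s$, the $\AS$-valued plot $y \mapsto [\sigma_0(y)]$, the smooth map $\Psi$, and the smooth inversion and multiplication of the diffeological groupoid $\GV$. Because $\Psi$ is a homomorphism of groupoids over $M$ (established above) and $[\sigma_0(x)] = h_0$, we have $\phi'(0) = \Psi(h_0) \odot \Psi(h_0)^{-1}$, which is an identity element of $\GV$. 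Hence Proposition~\ref{prop:subductionatid} applies and yields a neighborhood $V' \ni 0$ together with a plot $\psi' \colon V' \to \AS$ such that $\Psi \circ \psi' = \phi'|_{V'}$.

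Finally I would undo the translation inside $\AS$. Define $\psi(v) := \psi'(v) \odot [\sigma_0(\s(\phi(v)))]$. This product is defined in $\AS$ because $\Psi$ preserves source and target, so $\s(\psi'(v)) = \s(\phi'(v)) = f(\s(\phi(v))) = \t([\sigma_0(\s(\phi(v)))])$. Applying $\Psi$ and using $\phi(v) = \phi'(v) \odot \Psi([\sigma_0(\s(\phi(v)))])$ gives $\Psi(\psi(v)) = \Psi(\psi'(v)) \odot \Psi([\sigma_0(\s(\phi(v)))]) = \phi(v)$ on $V'$, and $\psi$ is a plot for the same reasons that $\phi'$ was. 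Thus $\Psi \circ \psi = \phi|_{V'}$, as required.

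The main difficulty is bookkeeping rather than a genuine obstruction: one must check that all the concatenations above are simultaneously defined on a single neighborhood of $0$ (handled by shrinking $V$ using continuity of $\s$) and that multiplication and inversion are smooth for the diffeological groupoids $\GV$ and $\AS$, so that $\phi'$ and $\psi$ are genuine plots. The conceptual point that makes the reduction work is that the translating factor is an element of $\Psi(\AS)$ depending smoothly on $\s(\phi(v))$, which is exactly the data supplied by the bisection $\sigma_0$ through a representative of $\phi(0)$.
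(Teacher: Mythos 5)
Your proof is correct and follows essentially the same route as the paper's: translate $\phi$ by the $\Psi$-image of a smoothly varying bisection element through a representative of $\phi(0)$ so that the resulting plot is centered at an identity, apply Proposition~\ref{prop:subductionatid}, and then undo the translation inside $\AS$. The only cosmetic difference is that you construct the translating family explicitly from a local bisection of a bisubmersion in $\PHB$ projected to $\AS$, whereas the paper directly invokes a locally defined bisection of $\AS$ through a preimage of $\phi(0)$.
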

\begin{proof}
Suppose $\phi \colon V \to \GV$ is a plot such that $\phi(0) = h \in \Psi(\AS) \subseteq \GV$. Let $g \in \AS$ be an element such that $\Psi(g) = h$. Furthermore, let $\sigma$ be a (locally defined) bisection of $\AS$ through $g$. Shrink $V$ if necessary so that $\sigma( \bt \circ \phi)$ is well defined.

Then $\overline{\phi} := \Psi \circ \sigma( \bs \circ \phi)^{-1} \cdot \phi$ is a well defined plot centered on a unit element of $\GV$. Let $\overline{\psi} := V \to \AS$ be a plot such that $\Psi \circ \overline{\psi} = \overline{\phi}$. Such a plot exists by Proposition~\ref{prop:subductionatid}. Let $\psi := \sigma(\bs \circ \phi) \cdot \overline{\psi}$. Then:
\begin{align*}
\Psi \circ \psi &= \Psi \circ (\sigma(\bs \circ \phi) \cdot \overline \psi ) \\
&= (\Psi \circ \sigma(\bs \circ \phi)) \cdot (\Psi \circ \overline \psi) \\
&= (\Psi \circ \sigma(\bs \circ \phi)) \cdot (\Psi \circ \sigma(\bs \circ \phi)^{-1}) \cdot \phi \\
&= \phi
\end{align*}
which concludes the proof.
\end{proof}
We can now complete the proof of Theorem~\ref{thm:main}.
\begin{proof}(of Theorem~\ref{thm:main})
Let us now equip $\GV$ and $\AS$ with their respective D-topologies (II.8 in \cite{Diffeology}). That is, a subset of $\GV$ or $\AS$ is open if and only if its inverse image under every plot is open. Since $\Psi$ is smooth, it follows that it is continuous with respect to the D-topology. Furthermore, Corollary~\ref{cor:plots.factor} showed us that plots on $\GV$ which intersect elements in the image of $\Psi$, locally factor through $\Psi$. By II.20 from \cite{Diffeology}, this implies that $\Psi$ is open.

These topologies make both $\GV$ and $\AS$ source connected topological groupoids. Since $\Psi$ is open, we have that $\Psi$ is surjective~\cite{MK2}. This implies that all plots on $\GV$ locally factor through $\AS$. Since $\Psi$ is also injective, we conclude that it is a diffeomorphism of diffeological spaces.
\end{proof}

\section{Functoriality}\label{section:functoriality}
Our aim in this section is to understand the functorial properties of the construction of $\Mon(\CF)$ and $\CH(\CF)$. The main technical difficulty in this section is that it is not generally possible to push forward vector fields. A consequence of this is that it is usually difficult to define a function at the level of $\CF$-paths. To circumvent this problem, we will require some technical development regarding the relationship between a foliation $\CF$ and its fiberspace $A(\CF)$.
\subsection{Comorphisms of sheaves of modules}
We need to develop a notion of morphism of foliated manifolds. Our model example is the case of projective foliations where a morphism of foliations will corresponds to a morphism of the associated Lie algebroids. Higgins and Mackenzie~\cite{hm1993} observed that there is a duality between morphisms of vector bundles and comorphisms of modules. This observation will motivate our definition of a morphism of foliated manifolds.
\begin{defi}
Suppose $\CF$ is a sheaf of $C^\infty_N$-modules on $N$ and $f \colon M \to N$ is a smooth function. The \emph{ pre-pullback $\overline{ f^! \CF}$ of $\CF$ along $f$} is a presheaf of $\Cinf$-modules such that for all open $U \subseteq M$:
\[ \overline{f^! \CF} (U) := \Cinf(U) \otimes_{C^\infty_N(N)} \CF(N) \]
Given $u \in C^\infty_N(N)$, the action on $C^\infty_M(U)$ is by the usual pullback operation:
\[ v \mapsto v \cdot (u \circ f)|_{U} \qquad v \in C^\infty_M(U)\]
The \emph{ pullback $f^! \CF$ of $\CF$ along $f$} is defined to be the sheafification of $\overline{f^! \CF}$.
\end{defi}
\begin{rem}
The pre-pullback of $\CF$ is a separated presheaf. In particular, the canonical sheafification map $\overline{f^! \CF} \to f^! \CF$ is injective and locally an isomorphism. Therefore, for any point $p \in M$ there exists an open neighborhood $U$ such that:
\[ f^! \CF (U) = \Cinf(U) \otimes_{C^\infty_N} \CF(N) \]
\end{rem}
If $\CF$ is locally finitely generated, then it follows that $f^! \CF$ is locally finitely generated. Generally, it is easier to work with elements in the image of the inclusion $\overline{f^! \CF} \into f^! \CF$ since they have an explicit form in terms of the tensor product.

The pullback we have just defined is a pullback in the world of modules, not singular foliations. In general, the pullback of a singular foliation (as a sheaf of modules) is only a module and not a singular foliation. It is a distinct operation from the inverse image $f^{-1}(\CF)$ and the pullback $f^* \CF$ operations seen in Androulidakis and Skandalis~\cite{AndrSk}.
\begin{ex}\label{ex:projectivepullback}
Suppose $E \to N$ is a vector bundle over $N$ and $f \colon M \to N$ is smooth. Let $\Gamma_E$ denote the associated sheaf of $C^\infty_N$-modules and $f^! E := M \times_N E$ denote the pullback vector bundle. Given an element:
\[ \sum u^i \otimes s_i \in \Cinf(U) \otimes_{C^\infty_N} \Gamma_E(N) \]
one can define an element $\sigma$ of $\Gamma_{f^!E}(U)$:
\[ \sigma(p) = \left( p, \,\sum u^i(p) (s_i \circ f)(p) \right) \]
This correspondence gives rise to a morphism of presheaves $\overline{f^! \Gamma_E} \to \Gamma_{f^! E}$. Since it is locally an isomorphism, the associated morphism of sheaves $f^! \Gamma_E \to \Gamma_{f^!E}$ is an isomorphism.
\end{ex}
The pullback is a functorial operation. That is, given a morphism $G \colon \CF_1 \to \CF_2$ of $C^\infty_N$-modules, then there is a canonical morphism of $\Cinf$-modules:
\[ f^! G \colon f^! \CF_1 \to f^!\CF_2 \]
which is uniquely determined by its behavior on the image of the pre-pullback:
\[( f^! G) \left(\sum u^i \otimes s_i \right) = \sum u^i \otimes G(s_i) \]
\begin{defi}
Suppose $\CF_M$ and $\CF_N$ are sheaves of modules on manifolds $M$ and $N$. A \emph{ comorphism}
\[ (F,f) \colon\CF_M \to \CF_N \]
consists of a pair $(F , f)$ where $f \colon M \to N$ is a smooth map and $F \colon \CF_M \to f^! \CF_N$ is a morphism of sheaves of $\Cinf$-modules.

Comorphisms are composed according to the following rule:
\[ (F, f) \circ (G, g) := (F \circ f^!G , f \circ g) \]
\end{defi}
\begin{ex}\label{ex:projectivecomorphism}
Let $E \to M$ and $W \to N$ be vector bundles over $M$ and $N$ respectively. Suppose $\Psi \colon E \to W$ is a vector bundle morphism which covers $f \colon M \to N$. Let
\[ \Psi_* \colon \Gamma_E \to \Gamma_{f^! W} \]
be the associated pushforward map. If we identify $\Gamma_{f^! W}$ with $f^! \Gamma_{W}$ using the canonical isomorphism found in Example~\ref{ex:projectivepullback} then $(\Psi_* , f) \colon \Gamma_E \to \Gamma_W$ is a comorphism.
\end{ex}
In fact, it turns out that all comorphisms $\Gamma_E \to \Gamma_W$ arises in this way. We will see how to construct the bundle map associated to a comorphism in the next section, where we will do it in a more general setting. The next example is a special case of the one just discussed.
\begin{ex}
Suppose $f \colon M \to N$ is a smooth map. Then $f^! \SX_N$ is canonically isomorphic to the sheaf of sections of $f^! TN$. The differential of $f$ defines a morphism of vector bundles $TM \to f^! TN$ and hence a comorphism:
\[ (\d f_*, f) \colon \SX_M \to \SX_N \]
\end{ex}
\subsection{Comorphisms and the fiberspace}
Higgins and Mackenzie~\cite{hm1993} showed that there is a functor from projective locally finitely generated modules with comorphisms as maps to the category of vector bundles. In this section, we will show that one can extend this functor to arbitrary modules by allowing for diffeological vector bundles.

The motivation for doing this is that working at the level of $A(\CF)$ will permit us to circumvent the problem of pushing forward vector fields. Although one cannot push forward $\CF$-paths, it makes sense to push forward a path in the fiberspace $A(\CF)$.

Let $\CF$ be a locally finitely generated $\Cinf$-module. Recall the definition of the fiberspace of $\CF$ (Definition~\ref{defi:localization}):
\[ A(\CF) := \bigsqcup_{p \in M} A(\CF)_p \qquad A(\CF)_p := \frac{\CF(M)}{I_p \CF(M)} \]
Recall from Definition~\ref{defi:fiberdiffeology} that we equip the fiberspace with a quotient diffeology via the evaluation map:
\[ \ev \colon M \times \CF(M) \to A(\CF) \qquad (p, X) \mapsto \ev_p (X) \]
The addition, multiplication and zero section operations on $M \times \CF(M)$ descend to smooth functions on $A(\CF)$ which make it into a diffeological vector bundle over $M$.

We begin with a lemma that tells us that the fiberspace construction plays well with pullbacks of modules.
\begin{lem}\label{lemma:diffeologicalpullback}
Suppose $\CF$ is a $C^\infty_N$-module and $f \colon M \to N$ is a smooth map. Then there is a morphism:
\[ q_f \colon A(f^! \CF) \to A(\CF) \]
of diffeological vector bundles covering $f \colon M \to N$.
\end{lem}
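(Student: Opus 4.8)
The plan is to build $q_f$ fiberwise out of an explicit formula that evaluates the coefficient functions of the pullback, and then to deduce smoothness from the subduction property of the evaluation map. The natural candidate is, at a point $p \in M$,
\[ (q_f)_p\Big(\sum_i u^i \otimes s_i\Big) := \sum_i u^i(p)\, \ev_{f(p)}(s_i), \]
where $u^i \in \Cinf(M)$ and $s_i \in \CF(N)$, using as representative an element of the pre-pullback $\overline{f^!\CF}(M) = \Cinf(M) \otimes_{C^\infty_N(N)} \CF(N)$. The first task is well-definedness. Since the tensor product is balanced over $C^\infty_N(N)$, I would check that for $g \in C^\infty_N(N)$ both $(u\cdot(g\circ f))\otimes s$ and $u \otimes g s$ are sent to $u(p)\,g(f(p))\,\ev_{f(p)}(s)$; this uses only that $g - g(f(p)) \in I_{f(p)}$, whence $\ev_{f(p)}(g s) = g(f(p))\,\ev_{f(p)}(s)$. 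The same computation shows the formula is $\Cinf(M)$-linear in the pullback argument and annihilates $I_p\,\overline{f^!\CF}(M)$, so it descends to a linear map out of the fiber built from the pre-pullback.

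The main obstacle is that $f^!\CF$ is the \emph{sheafification} of the pre-pullback, so $(f^!\CF)(M)$ is generally strictly larger than $\Cinf(M)\otimes_{C^\infty_N}\CF(N)$, and the formula above only applies to representatives that come from the pre-pullback. To bridge this gap I would first establish that the fiber $A(\CG)_p$ of a sheaf of $\Cinf$-modules is a local invariant: for any open neighborhood $U$ of $p$, restriction induces an isomorphism $\CG(M)/I_p\CG(M) \cong \CG(U)/I_p\CG(U)$. Both directions follow from a bump-function argument with $\chi \equiv 1$ near $p$ and $\operatorname{supp}\chi \subset U$: multiplying a section over $U$ by $\chi$ extends it to a global section with the same image in the fiber (surjectivity), while writing $\sigma = \chi^2\sigma + (1-\chi^2)\sigma$ with $1-\chi^2 \in I_p$ shows that any global section restricting into $I_p\CG(U)$ already lies in $I_p\CG(M)$ (injectivity). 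Combining this with the remark that every point of $M$ has a neighborhood $U$ on which $f^!\CF(U) = \Cinf(U)\otimes_{C^\infty_N}\CF(N)$ yields $A(f^!\CF)_p \cong \overline{f^!\CF}(U)/I_p\,\overline{f^!\CF}(U)$, so every class in $A(f^!\CF)_p$ admits a pre-pullback representative and $(q_f)_p$ is unambiguously defined. Because the defining formula is canonical it is independent of the chosen $U$, so the fiberwise maps glue to a single fiberwise-linear map $q_f$ satisfying $\pi_{A(\CF)}\circ q_f = f\circ\pi_{A(f^!\CF)}$, i.e.\ covering $f$.

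It then remains to verify that $q_f$ is smooth as a map of diffeological spaces. Since $\ev \colon M \times (f^!\CF)(M) \to A(f^!\CF)$ is a subduction, it suffices to check that $q_f \circ \ev$ is smooth into $A(\CF)$. Given a plot, the coefficient diffeology on $(f^!\CF)(M)$ lets me write its $(f^!\CF)(M)$-component locally as $w \mapsto \sum_i c^i(w,\cdot)\,X_i$ with $c^i$ smooth and $X_i \in \overline{f^!\CF}(V)$ fixed pre-pullback elements $X_i = \sum_j u_i^j \otimes s_i^j$; applying $q_f\circ\ev$ over a base plot $p(w,t)$ produces
\[ \sum_{i,j} c^i\big(w, p(w,t)\big)\, u_i^j\big(p(w,t)\big)\,\ev_{f(p(w,t))}\big(s_i^j\big), \]
which is manifestly a plot of $A(\CF)$, being assembled from $\ev$ together with the smooth scaling and addition operations of the diffeological vector bundle $A(\CF)$. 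Hence $q_f$ is smooth, fiberwise linear, and covers $f$, which is exactly the claim. I expect the only genuinely delicate point to be the locality of the fiber needed to tame the sheafification; the balancing check and the smoothness verification are routine once the formula and the subduction are in place.
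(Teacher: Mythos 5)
Your proposal is correct and takes essentially the same route as the paper: the same formula $\sum u^i \otimes s_i \mapsto \sum u^i(p)\,\ev_{f(p)}(s_i)$ on pre-pullback representatives, descent through the quotient by $I_p$, and smoothness deduced from the evaluation subduction and the coefficient diffeology. The only difference is that you explicitly verify what the paper leaves implicit (the balancing over $C^\infty_N(N)$, the bump-function locality lemma that bridges the sheafification gap, and the plot computation), so this is a faithful expansion of the paper's sketch rather than a different argument.
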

\begin{proof}
Observe that for all $p \in M$, there is a $C^\infty_N$-bilinear function:
\[ C^\infty_M(M) \times \CF_N(N) \to A(\CF_N)_{f(p)} \qquad (u, X) \mapsto u(p) \cdot \ev_{f(p)} (X) \]
This induces a smooth function:
\[ Q_f \colon M \times (f^! \CF)(M) \to A(\CF) \]
which is uniquely determined by its behavior on elements in the pre-pullback:
\[ Q_f \left( p, \sum u^i \otimes X_i \right) := \sum u^i(p) \cdot \ev_p (X_i) \]
Since $Q_f(p, Y) = 0$ for all $Y \in I_p (f^! \CF_N)$, we get that $Q_f$ descends to a diffeological vector bundle map:
\[ q_f \colon A(f^! \CF) \to A(\CF) \]
\end{proof}
\begin{ex}
If $\CF$ is the sheaf of sections of a vector bundle $E \to N$. Then we have a natural identification:
\[ A(f^! \CF) = f^! E := M \times_N E \]
Under this identification, $q_f \colon f^! E \to E$ is projection to the second component.
\end{ex}
\begin{defi}
Suppose $(F, f) \colon \CF_M \to \CF_N$ is a comorphism of modules. Let:
\[ \overline{F} \colon A(\CF_M) \to A(f^!\CF_N) \qquad \ev_p(X) \mapsto \ev_p(F(X)) \]
Then we define:
\[ A(F,f) := q_f \circ \overline{F} \colon A(\CF_M) \to A(f^! \CF_N) \to A(\CF_N) \]
where $q_f$ is as in Lemma~\ref{lemma:diffeologicalpullback}.
\end{defi}
\begin{ex}
Suppose $\CF_M = \Gamma_E$ and $\CF_N = \Gamma_W$ are sheaves of sections of a vector bundle. Then $A(\Gamma_E) \to M$ is canonically isomorphic to $E \to M$ and $A(\Gamma_W) \to N$ is canonically isomorphic to $W \to N$. Using these identifications, we get vector bundle morphism
\[ A(F, f) \colon E \to W \]
Furthermore, if we apply the reverse construction seen in Example~\ref{ex:projectivepullback}, we will recover the comorphism $(F,f)$.
\end{ex}
It is straightforward to check that the construction of the fiberspace $A(\CF)$ and the diffeological vector bundle map $A(F,f)$ yields a functor $A$ from the category of manifolds equipped with sheaves of modules to the category of diffeological vector bundles over manifolds.
\subsection{Functoriality of \texorpdfstring{$\Mon(\CF)$}{Mon(F)}}
We now specialize our discussion to foliations rather than general modules. In this section, $\iota_M \colon \CF_M \into \SX_M$ is a foliation on $M$ and $\iota_N \colon \CF_N \into \SX_N$ is a foliation on $N$.
\begin{defi}
A comorphism $(F,f) \colon \CF_M \to \CF_N$ is a \emph{ morphism of foliated manifolds} if it satisfies the following properties:
\begin{itemize}
\item $F$ is compatible with the inclusions. That is, the following diagram commutes:
\begin{equation}\label{eqn:anchorcompatibility}
\begin{tikzcd}
\CF_M \arrow{r}{{F}} \arrow[hook]{d}{{\iota_M}} & f^* \CF_N \arrow{d}{{f^* \iota}} \\
\SX_M \arrow{r}{{\d f_*}} & f^* \SX_N
\end{tikzcd}
\end{equation}
\item $F$ is compatible with the brackets. That is, given $X_1, X_2 \in \CF_M(M)$ such that:
\[ F(X_1) = \sum_{i=1} u^i \otimes Y_i \qquad F(X_2) = \sum_{j=1} v^j \otimes Y_j\]
we have that:
\begin{equation}\label{eqn:bracket.compat}
F([X_1, X_2] ) = \sum_{i,j} u^i v^j \otimes [Y_i , Y_j] + \sum_j X_1(v^j) \otimes Y_j + \sum_i X_2(u^i) \otimes Y_i
\end{equation}
\end{itemize}
\end{defi}
\begin{ex}
Suppose $\CF_M$ and $\CF_N$ are projective foliations. Then a comorphism $(F,f) \colon \CF_M \to \CF_N$ is a morphism of foliated manifolds if and only if the associated vector bundle homomorphism:
\[ A(F,f) \colon A(\CF_M) \to A(\CF_N) \]
is a morphism of Lie algebroids. In this setting, Diagram~(\ref{eqn:anchorcompatibility}) is equivalent to compatibility with the anchor maps and Equation~(\ref{eqn:bracket.compat}) is equivalent to the usual compatibility with the Lie brackets (e.g. compare to Equation~(2.18) in \cite{LecturesIntegrabilty}.)
\end{ex}
\begin{ex}\label{ex:submoduleinclusion}
Suppose $\CF_1$ and $\CF_2$ are two foliations on $M$. Then a morphism of foliated manifolds $(F, \text{Id}_M) \colon \CF_1 \to \CF_2$ exists if and only if $\CF_1$ is a submodule of $\CF_2$. If such a morphism exists, it is unique since Diagram~\ref{eqn:anchorcompatibility} implies that $F \colon \CF_1 \to \CF_2$ is the submodule inclusion map.
\end{ex}
\begin{ex}\label{ex:morphismsubmersion}
Suppose $f$ is a submersion and $\CF_M = f^{-1}(\CF_N)$. That is, $\CF_M$ is the foliation generated by projectable vector fields $X \in \SX_M$ such that $\d f(X) \in \CF_N$. By choosing local sections of $f$, we can conclude that:
\[ f^! \iota_N \colon f^! \CF_N \to f^! \SX_N \]
is injective. Therefore, if we are given a morphism of foliated manifolds
\[ (F,f ) \colon \CF_M = f^{-1}(\CF_N) \to \CF_N \]
then Diagram~(\ref{eqn:anchorcompatibility}) implies that $F \colon \CF_M \to f^!\CF_N$ is just $\d f_*|_{\CF_M}$. \end{ex}
\begin{ex}\label{ex:factorization}
Suppose $(F, f) \colon \CF_M = \CF_N$ is a morphism of foliated manifolds and $f \colon M \to N$ is a submersion. Then Diagram~(\ref{eqn:anchorcompatibility}) implies that $\CF_M$ is a submodule of $f^{-1}(\CF_N)$. Therefore, $(F,f)$ factors uniquely into a composition
\[
\begin{tikzcd}[column sep = large]
& f^{-1} (\CF_N) \arrow{dr}{{(F_2, f)}} & \\
\CF_M \arrow{rr}{ {(F,f)}} \arrow{ur}{{ (F_1, \text{Id}_M) }} & & \CF_N
\end{tikzcd}
\]
That is $(F,f)$ is a composition of Example~\ref{ex:morphismsubmersion} with Example~\ref{ex:submoduleinclusion}. Consequently, when $f$ is a submersion, a morphism of foliated manifolds $\CF_M \to \CF_N$ exists if and only if $\CF_M \into f^{-1}(\CF_N)$. Furthermore, if such a morphism exists then it is unique.
\end{ex}
The bracket condition in the definition of morphism of foliated manifolds is not implied by compatiblity with the inclusion and a morphism of foliated manifolds $(F,f)$ is not always uniquely determined by $f$. The following two examples demonstrate these facts.
\begin{ex}
Let \(  M = \RR^2 \) with \( \CF_M = \SX_M \) and take \( N = \RR^2 \) with the foliation:
\[ \CF_N = \langle x \de_x , y \de_y , x \de_y , y \de_x \rangle \]
Now take \( f \colon M \to N \) to be the zero map \( f(x,y) = 0 \) and let \( F \colon \CF_M \to f^! \CF_N \) be such that:
\[ F(\de_x) = 1 \otimes x \de_y  \qquad F(\de_y) = 1 \otimes y \de_x \]
Then this pair \( (F,f) \) defines a comorphism of modules and satisfies compatibility with the inclusion. However, it does not satisfy compatibility with the bracket.
\end{ex}
\begin{ex}
Let $M=\RR$ with the full foliation and $N=\RR^2$ with the foliation given by the free module,
\[ \CF_N:= \langle R \rangle_{\CI_N} \]
with the inclusion
\[ \iota_N \colon \CF_N \to \SX_N \qquad R \mapsto y\de_x- x\de_y \]
Now let $f \colon \RR \to \RR^2$ be the constant map which sends every point to zero. Notice that the pullback of the inclusion $f^! \iota_N \colon f^!\CF_N \to f^! \SX_N$ is not injective. For example, $1 \otimes R$ is a non-trivial element of $f^!\CF_N$ but:
\[ (f^! \iota_N) (1 \otimes R) = 1 \otimes i_N(R) = 1 \otimes (y \de_x - x \de_y) = (y \circ f) \otimes \de_x - (x \circ f) \otimes \de_y = 0 \]
Indeed, given an arbitrary constant $C \in \RR$, we can define a module homomorphism:
\[ F^C \colon \SX_\RR \to f^* \CF_N \qquad F^C (\de_t) = C \otimes R \]
For each choice of $C$, $(F^C, f)$ is a morphism of foliated manifolds. Therefore, we have exhibited a family of distinct morphisms with the same base map.
\end{ex}
Suppose $(F, f)\colon \CF_M \to \CF_N$ is a morphism of foliated manifolds. Then Diagram~(\ref{eqn:anchorcompatibility}) implies that $f$ preserves the characteristic distribution associated to each foliation. In particular, for each leaf $L \into M$, there is a unique leaf $f_* L \into N$ containing $f(L)$. From now on, we will use this push-forward notation to refer to the induced function at the level of leaf-spaces.
\begin{thm}\label{thm:pi1isfunctorial}
Suppose we are given a morphism $(F, f) \colon \CF_M \to \CF_N$ of foliated manifolds. Then there exists a unique morphism of diffeological groupoids
\[ \Mon(F,f) \colon \Mon(\CF_M) \to \Mon(\CF_N) \]
such that the restriction to each leaf
\[ \Mon(F,f)_L \colon \Mon(\CF_M)_L \to \Mon(\CF_N)_{f_* L} \]
integrates the Lie algebroid morphism
\[A(F,f)_L \colon A(\CF_M)_L \to A(\CF_N)_{f_* L} \]
\end{thm}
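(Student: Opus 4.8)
The plan is to reduce the entire construction to the level of $A$-paths, where the comparison theorem (Theorem~\ref{thm:fundamentalgrpd}) and the functor $A$ on fiberspaces do all the work, and then to handle the global smoothness by means of the subduction $Q$ introduced in the proof of Theorem~\ref{thm:fundamentalgrpd}. First I would abbreviate $\Phi := A(F,f) \colon A(\CF_M) \to A(\CF_N)$, the morphism of diffeological vector bundles covering $f$ produced by the functor $A$. Restricting to a leaf $L \into M$, the anchor and bracket compatibility conditions (Diagram~(\ref{eqn:anchorcompatibility}) and Equation~(\ref{eqn:bracket.compat})) guarantee that $\Phi_L \colon A(\CF_M)_L \to A(\CF_N)_{f_* L}$ is a morphism of the (smooth, integrable) Lie algebroids supplied by Lemma~\ref{lem:ALisalgebroid} and Theorem~\ref{thm:fundamentalgrpd}. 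Pushing $A$-paths forward along $\Phi$ defines
\[ \Phi_* \colon \CP(A(\CF_M)) \to \CP(A(\CF_N)) \qquad a \mapsto \Phi \circ a, \]
and anchor compatibility shows this is well defined: for an $A(\CF_M)_L$-path $a$ we have $\rho_N(\Phi(a)) = \d{f}(\rho_M(a)) = \d{(\pi_N \circ \Phi \circ a)}/\d{t}$, so $\Phi \circ a$ is an $A(\CF_N)_{f_* L}$-path. Composing with the subduction $Q \colon \CP(\CF_M) \to \CP(A(\CF_M))$ yields a smooth map $\Phi_* \circ Q \colon \CP(\CF_M) \to \CP(A(\CF_N))$.

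The heart of the argument is that $\Phi_*$ descends to $A$-homotopy classes. Leaf-wise this is the standard fact that a morphism of integrable Lie algebroids integrates to a morphism of their source-simply-connected integrations: given an $A(\CF_M)_L$-homotopy $a(t,s)$ with defining family of sections $\alpha(t,s)$ and solution $\beta(t,s)$ of Equation~(\ref{eqn:ahomotopy}), the bracket compatibility of $\Phi_L$ ensures that $\Phi \circ \alpha$ and $\Phi \circ \beta$ again solve Equation~(\ref{eqn:ahomotopy}), while $\beta(1,s)$ vanishing at the endpoint forces $\Phi(\beta(1,s))$ to vanish. Thus $\Phi_*$ sends $A$-homotopies to $A$-homotopies. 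Combining this with Claim~2 in the proof of Theorem~\ref{thm:fundamentalgrpd}, which identifies $\CF$-homotopy with $A$-homotopy through $Q$, shows that $\Phi_* \circ Q$ carries $\CF_M$-homotopic paths to $A$-homotopic paths. Since $\Mon(\CF_N)$ is the quotient of $\CP(\CF_N)$ by $\CF_N$-homotopy and coincides, by Theorem~\ref{thm:fundamentalgrpd}, with $\CP(A(\CF_N))$ modulo $A$-homotopy, the composite $\Phi_* \circ Q$ descends to a map
\[ \Mon(F,f) \colon \Mon(\CF_M) \to \Mon(\CF_N). \]
Smoothness is then automatic, because $\Mon(\CF_M)$ carries the quotient diffeology and $\Phi_* \circ Q$ is smooth.

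It remains to verify the groupoid structure, the integration property, and uniqueness. That $\Mon(F,f)$ is a morphism of diffeological groupoids over $f$ follows because both $Q$ and $\Phi_*$ intertwine the source and target maps with $f$ and are compatible with the concatenation operation (Definition~\ref{defi:concat}) inducing the products. Restricting to a leaf $L$ recovers, via the identification of Theorem~\ref{thm:fundamentalgrpd}, exactly the Crainic--Fernandes integration of $\Phi_L = A(F,f)_L$, which is the asserted integration property. For uniqueness I would argue leaf-wise: each $A(\CF_M)_L$ is integrable with source-simply-connected integration $\mathcal{G}(A(\CF_M)_L) \cong \Mon(\CF_M)_L$, so there is at most one morphism integrating the given algebroid morphism; as the leaves partition $M$, these leaf-wise maps assemble into a unique $\Mon(F,f)$.

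I expect the main obstacle to lie not in the leaf-wise integration, which is classical, but in the diffeological bookkeeping: namely checking that $\Phi_* \circ Q$ genuinely descends along $\CP(\CF_M) \to \Mon(\CF_M)$ and is smooth for the quotient diffeology, rather than merely set-theoretically well defined on each leaf. The device that makes this tractable is that $Q$ is a subduction and $\Phi$ is a smooth morphism of diffeological vector bundles, so the global map is forced to be smooth once the homotopy invariance of the previous paragraph is in place.
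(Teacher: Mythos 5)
Your proposal is correct and follows essentially the same route as the paper's own proof: push $A$-paths forward along $A(F,f)$, use the subduction $Q$ and the homotopy correspondence (Claims 1 and 2) from Theorem~\ref{thm:fundamentalgrpd} to descend to the quotients, and derive uniqueness from the simple connectivity of source fibers together with Lie's theorems. Your write-up merely supplies more detail than the paper at two points the authors leave implicit, namely why $A(F,f)$ preserves solutions of Equation~(\ref{eqn:ahomotopy}) and why the descended map respects the groupoid structure.
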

\begin{proof}
The uniqueness follows from the fact that the source fibers of $\Mon(\CF_M)$ are simply connected and Lie's theorems for Lie algebroids. We only need to concern ourselves with constructing $\Mon(F , f)$.

Recall the set of $A(\CF_M)$-paths discussed in the proof of Theorem~\ref{thm:fundamentalgrpd}.
\[ \CP(A(\CF_M)) := \bigsqcup_{L} \CP(A(\CF_M)_L) \subseteq C^{\infty}([0,1],A(\CF_M)) \]
There is a similar set of $A(\CF_N)$-paths, $\CP(A(\CF_N))$. By post-composition of $A(F, f)$, we obtain a smooth function:
\[ \overline \Phi \colon \CP(A(\CF_M)) \to \CP(A(\CF_N)) \]
The definition of morphism of foliated manifolds implies that $A(F, f)$ is a Lie algebroid morphism when restricted to each leaf. Therefore $A(F, f)$ respects the algebroid homotopy equivalence relation.

By Claim 1 and Claim 2 in the proof of Theorem~\ref{thm:fundamentalgrpd}, $\overline \Phi$ descends to a smooth function:
\[ \Phi \colon \Mon(\CF_M) \to \Mon(\CF_N) \]
On each leaf, $\Phi$ has been constructed in exactly the same manner as the classical proof of Lie's second theorem for groupoids. Therefore, we take $\Mon(F, f) = \Phi$.
\end{proof}

\begin{ex}\label{ex:ME}
Suppose $f$ is a surjective submersion and $\CF_M = f^{-1}(\CF_N)$. We call an $\CF_M$-path $(X,\gamma)$ projectable if:
\[ X(t) = \sum c^i(t) X_i \]
where each $c^i(t) \colon [0,1] \to \mathbb{R}$ is a smooth function and $X_i \in \CF_M$ is a projectable vector field along $f$. Let $\CP_f(\CF_M) \subseteq \CP(\CF_M)$ denote the set of all $\CF_M$-paths which are projectable along $f$.

Since $\CF_M$ is generated by projectable vector fields, it follows that any $\CF_M$-path $Y$ is $\CF_M$-homotopic to a projectable $\CF_M$-path $X_Y$. Consequently, the map $\Phi$ from Theorem~\ref{thm:pi1isfunctorial} arises from a function at the level of projectable $\CF_M$-paths:
\[ \CP_f(\CF_M) \to \CP(\CF_N) \qquad (X,\gamma) \mapsto (\d f (X), \gamma) \]
\end{ex}
The construction of the homomorphism in Theorem~\ref{thm:pi1isfunctorial} yields a functor from the category of foliated manifolds to the category of diffeological groupoids. The compatibility of $\Mon$ with composition of morphisms follows from the usual functoriality of the construction of the universal integration of a Lie algebroid.

We warn the reader that $\Mon(F, f)$ does not always descend to a homomorphism at the level of the associated holonomy groupoids. By this we mean that there does not always exist a morphism of diffeological groupoids $\CH(\CF_M) \to \CH(\CF_N)$ which completes the following diagram:
\begin{equation}\label{diagram:completeit}
\begin{tikzcd}[column sep = large]
\Mon(\CF_M) \arrow{d} \arrow{r}{{\Pi_1(F,f)}} & \Mon(\CF_N) \arrow{d} \\
\CH(\CF_M) & \CH(\CF_N)
\end{tikzcd}
\end{equation}
This is already known to occur in the case of regular foliations. For example:
\begin{ex}
Suppose $N \to S^1$ is a non-trivial line bundle equipped with a flat connection (i.e. the Möbius band). We think of $N$ as a foliated manifolds by taking $\CF_N$ to be the vector fields tangent to the connection. Let $M = S^1$ be equipped with the full foliation. Both foliations are regular and the inclusion $f \colon M \into N$ of the zero section preserves the distributions so it induces a morphism of foliated manifolds $(F , f) \colon \CF_M \to \CF_N$.

The isotropy groups of $\Mon(\CF_M)$ are $\mathbb{Z}$ and the induced map $\Mon(\CF_M) \to \Mon(\CF_N)$ is the inclusion of the full subgroupoid at the zero section.

The isotropy groups of the holonomy groupoid $\CH(\CF_M)$ are trivial whereas the isotropy groups of $\CH(\CF_N)$ are $\mathbb{Z}_2$ on the zero section. Since there is no way to complete the following diagram of group homomorphisms
\[
\begin{tikzcd}[column sep = large]
\mathbb{Z} \arrow{d} \arrow{r}{1_{\mathbb{Z}}} & \mathbb{Z} \arrow{d} \\
1 & \mathbb{Z}_2
\end{tikzcd}
\]
there cannot exist a groupoid homomorphism $\CH(\CF_M) \to \CH(\CF_N)$ which completes Diagram~(\ref{diagram:completeit}).
\end{ex}
\begin{ex}
Suppose $M = \mathbb{R}^2$. Let $R \in \SX_M(M)$ be the usual counterclockwise rotation vector field. Now let $N := \mathbb{R}^3$ and let $X \in \SX_N(N)$ be the vector field:
\[ X := R \oplus -z \frac{\partial}{\partial z} \]
We take $\CF_M$ and $\CF_N$ to be the foliations generated by $R$ and $X$, respectively. Now let $f$ be the inclusion:
\[ f \colon M \to N \qquad (x,y) \mapsto (x,y,0) \]
Let $F \colon \CF_M \to f^* \CF_N$ be such that $F(R) = 1 \otimes X$. Then $(F, f)$ is a morphism of foliated manifolds. The smooth function $\Mon(\CF_M) \into \Mon(\CF_N)$ is the inclusion of the full subgroupoid over the $z = 0$ plane. In particular, it is an isomorphism at the level of isotropy groups.

If we compute the isotropy group of the holonomy of $\CF_M$ at the origin, we can see that it is isomorphic to $S^1$. On the other hand, the isotropy group of $\CH(\CF_N)$ at the origin is $\mathbb{R}$. Since there does not exist a non-trivial homomorphism of Lie groups $S^1 \to \mathbb{R}$, we conclude that $(F, f)$ does not induce a morphism $\CH(\CF_M) \to \CH(\CF_N)$.
\end{ex}
We conclude with a theorem that says that we can indeed complete Diagram~(\ref{diagram:completeit}) when $f$ is a submersion.
\begin{thm}\label{thm:holfunc}
Suppose $(F, f) \colon \CF_M \to \CF_N$ is a morphism of foliated manifolds and $f \colon M \to N$ is a submersion. There exists a diffeological groupoid homomorphism
\[ \Hol(F,f) \colon \CH(\CF_M) \to \CH(\CF_N) \]
which makes the following diagram commute:
\[
\begin{tikzcd}[column sep = large]
\Mon(\CF_M) \arrow{d} \arrow{r}{ {\Mon(F,f)}} & \Mon(\CF_N) \arrow{d} \\
\CH(\CF_M) \arrow{r}{ {\Hol(F,f)}} & \CH(\CF_N)
\end{tikzcd}
\]
\end{thm}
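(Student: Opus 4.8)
The plan is to exploit the fact that $\CH(\CF)$ is a quotient of $\Mon(\CF)$. By Theorem~\ref{thm:hol.to.hom} the identity on $\CP(\CF)$ descends to a surjection $q\colon\Mon(\CF)\to\CH(\CF)$, and since quotient diffeologies compose, $q$ is a subduction. Thus, to build $\Hol(F,f)$ together with its commuting square, it is enough to show that the composite $q_N\circ\Mon(F,f)\colon\Mon(\CF_M)\to\CH(\CF_N)$ is constant on the $\CF_M$-holonomy classes; smoothness of $\Hol(F,f)$ then follows from the universal property of the quotient diffeology (the composite is smooth and $q_M$ is a subduction), and the homomorphism property is inherited from $\Mon(F,f)$ because $q$ respects concatenation. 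By Example~\ref{ex:factorization}, since $f$ is a submersion the morphism $(F,f)$ factors canonically as a submodule inclusion $\CF_M\hookrightarrow f^{-1}(\CF_N)$ over $\mathrm{Id}_M$ (Example~\ref{ex:submoduleinclusion}) followed by the differential $\d f_*\colon f^{-1}(\CF_N)\to\CF_N$ (Example~\ref{ex:morphismsubmersion}). Using the functoriality of $\Mon$ (Theorem~\ref{thm:pi1isfunctorial}), it suffices to treat these two cases separately and compose the resulting maps.

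The mechanism in each case is an induced homomorphism on holonomy transformation groupoids. I will construct $\lambda\colon\HT_M\to\HT_N$ (for a compatible choice of slices, legitimate by Theorem~\ref{thm:ind.slices}), defined at least on the image of $\Hol_M$, such that $\Hol_N\circ(\text{pushforward of }\CF\text{-paths})=\lambda\circ\Hol_M$; once this intertwining holds, two $\CF_M$-paths with equal $\CF_M$-holonomy automatically have pushforwards with equal $\CF_N$-holonomy, which is exactly the required descent. For the submersion case, the point is that $\ker\d f$ lies in the characteristic distribution of $f^{-1}(\CF_N)$, so that each slice $S^M_x$ for $f^{-1}(\CF_N)$ is carried by $f$ diffeomorphically onto a slice $S^N_{f(x)}$ for $\CF_N$; choosing slices in this way, a foliation-preserving germ $g\colon S^M_x\to S^M_y$ conjugates by $f$ to a foliation-preserving germ $S^N_{f(x)}\to S^N_{f(y)}$, and the $f$-relatedness of flows of projectable fields, $f\circ\Phi^t_X=\Phi^t_{\d f X}\circ f$, shows that $\lambda$ intertwines the holonomy maps while sending the correction term $Z\in I_y\CF$ and the ambiguity subgroup $\exp(I_y\CF)$ to their $\CF_N$-counterparts. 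For the submodule case $M=N$, $f=\mathrm{Id}_M$, the underlying diffeomorphisms are literally unchanged: a transformation in the image of $\Hol_M$ is represented by a flow of elements of $\CF_M\subseteq\CF_N$, which by Lemma~\ref{lem:preserves.foliation} also preserves $\CF_N$; correcting this germ to $\CF_N$-slices via Lemma~\ref{lem:slice} and absorbing the ambiguity using $I_x\CF_M\subseteq I_x\CF_N$ together with Lemma~\ref{lem:slice2}, one sees that the $\CF_M$-holonomy class determines a well-defined $\CF_N$-holonomy class.

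Assembling the two cases gives $\Hol(F,f)$ on holonomy classes, and the square with $\Mon(F,f)$ commutes by construction, because the path-level pushforward used to define $\lambda$ is precisely the one inducing $\Mon(F,f)$ (cf. Example~\ref{ex:ME} in the submersion case). I expect the main obstacle to be the well-definedness of $\lambda$ in the submersion case: one must check that conjugation by $f|_{S^M_x}$ is insensitive to the choice of representative and of correction term, i.e. that the $\exp(I\CF)$-ambiguity upstairs maps exactly into the $\exp(I\CF_N)$-ambiguity downstairs. This is where the hypothesis that $f$ is a submersion (so that $\ker\d f$ sits inside the distribution and slices match up under $f$) and the slice lemmas~\ref{lem:slice} and~\ref{lem:slice2} carry the real weight; the two counterexamples preceding the theorem show that without the submersion hypothesis no such $\lambda$ can exist, so any successful argument must use it essentially.
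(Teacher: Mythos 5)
Your proposal follows essentially the same route as the paper's proof: the same descent criterion through the quotient maps $q_M,q_N$, the same factorization of $(F,f)$ via Example~\ref{ex:factorization} into a submodule inclusion over $\mathrm{Id}_M$ plus the pullback case $\CF_M = f^{-1}(\CF_N)$, and the same mechanisms in each case (containment of transversals and of the ambiguity subgroups $\exp(I\CF)$ for the inclusion; $f$ carrying transversals diffeomorphically onto transversals together with $f$-relatedness of flows of projectable paths, cf.\ Example~\ref{ex:ME}, for the submersion). Your repackaging of the descent as an intertwining map $\lambda\colon \HT_M \to \HT_N$ of holonomy transformation groupoids is only a cosmetic variation, so the proposal is correct and matches the paper.
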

\begin{proof}
Let
\[ q_M \colon \Mon(\CF_M) \to \CH(\CF_M) \qquad q_N \colon \Mon(\CF_N) \to \CH(\CF_M) \]
be the natural quotient maps. We need to show that for all $g \in \Mon(\CF_M)$ such that $q_M(g)$ is an identity, we have that $q_N \circ \Mon(F,f)(g)$ is an identity element.

Recall the pullback foliation $f^{-1}(\CF_N)$ which is generated by $\Cinf$-linear combinations of projectable vector fields $X$ such that $\d f(X) \in \CF_N$. Since $f$ is a submersion, Example~\ref{ex:factorization} told us that $(F,f)$ factors uniquely through $f^{-1}(\CF_N)$. That is, there exist comorphisms:
\[ (F_1 , f) \colon f^{-1}(\CF_N) \to \CF_N \qquad (F_2 , \text{Id}_M) \colon \CF_M \to f^{-1}(\CF_N) \]
such that
\[(F, f) = (F_1 , f) \circ (F_2 , \text{Id}_M) \]
Therefore, we can split the proof into two cases. One case is where $f = \text{Id}_M$ and one case where $\CF_M = f^{-1}(\CF_N)$.

Suppose $f = \text{Id}_M$. Then the condition of being compatible with the inclusion implies that $F \colon \CF_M \to \CF_N$ is just the inclusion of a submodule of vector fields. In particular, $\Mon(F,f)$ sends the $\CF_M$ homotopy class of $(X,\gamma)$ to the $\CF_N$ homotopy class of $(X,\gamma)$. If $g \in \Mon(\CF_M)$ and $q_M(g)$ is an identity element, there must exist an $\CF_M$-path $(X,\gamma)$ which represents $g$ and a transversal $\tau$ to the leaves of $\CF_M$ such that
\[ \Phi^1_X|_{\tau} = \text{Id}_\tau. \]
Since a submanifold transverse to $\CF_M$ must contain a submanifold transverse to $\CF_N$, it follows that the holonomy of $(X,\gamma)$ relative to $\CF_N$ is also trivial.

For the other case we take $\CF_M = f^{-1}(\CF_M)$. Given a transversal $\tau$ to $\CF_M$, then $f(\tau)$ is a transversal to $\CF_N$. Both $\tau$ and $f(\tau)$ inherit foliations from $\CF_M$ and $\CF_N$ respectively. From the definition of the pullback foliation, it follows that:
\[ f|_{\tau} \colon \tau \to f(\tau) \]
is a foliation preserving diffeomorphism.

Recall the discussion of projectable $\CF_M$-paths from Example~\ref{ex:ME}.
Suppose $g \in \Mon(\CF_M)$ is represented by a projectable $\CF_M$-path $(X,\gamma)$ and $q_M(g)$ is an identity. Given any transversal $\tau$ through $\gamma(0)$ we have that
\[ f \circ \Phi^1_X|_{\tau} = \Phi^1_{\d f (X)}|_{f(\tau)} \]
Since $f|_{\tau} \colon \tau \to f(\tau)$ is a foliation preserving diffeomorphism, it follows that the holonomy of $(X,\gamma)$ is trivial if and only if the holonomy of $(\d f (X), f \circ \gamma)$ is trivial.
\end{proof}
\begin{rem}
According to Example~\ref{ex:factorization}, the condition in Theorem~\ref{thm:holfunc} that $(F,f)$ is a morphism of foliated manifolds is equivalent to the claim that $\CF_M$ is contained in $f^{-1}(\CF_N)$. This result is an improvement on theorems that appear in preprints \cite{SingSub} and \cite{lie2grps}. However, by decomposing the problem into the cases $f = \text{Id}_M$ and $\CF_M = f^{-1}(\CF_N)$, one can mostly recover Theorem~\ref{thm:holfunc} as a corollary of these papers.
\end{rem}
\begin{ex}\label{ex:ME1} Let $\pi: P\fto (M,\CF_M)$ be a surjective submersion with connected fibers. On $P$, take the foliation $\CF_P:=\pi^{-1}(\CF_M)$. In a recent preprint~\cite{ME2018}, it is shown that:
\[ \CH(\CF_P)\cong \pi^{-1}\CH(\CF_M) := P \times_{\pi,t} \CH(\CF_M) \times_{s, \pi} P \]
The canonical projection map $\pi^{-1}\CH(\CF_M)\fto \CH(\CF_M)$ is $\Hol(\d\pi_*, \pi)$ from Theorem~\ref{thm:holfunc}.

If we are given a Hausdorff Morita equivalence $N \leftarrow P \to M$ as defined in \cite{ME2018}, we get a pair of morphisms foliated manifolds $\CF_M \leftarrow \CF_P \to \CF_N$ and a commuting diagram:
\[
\begin{tikzcd}
\Mon(\CF_N) \arrow{d} & \Mon(\CF_P) \arrow{r}\arrow{d} \arrow{l} & \Mon(\CF_M) \arrow{d} \\
\CH(\CF_N) & \CH(\CF_P) \arrow{r} \arrow{l} & \CH(\CF_M) \\
\end{tikzcd}
\]
On the bottom row, the diffeological groupoid morphisms are weak equivalences (per \cite{ME2018}). On the top row, the horizontal arrows are diffeological groupoid fibrations which are not necessarily weak equivalences.

If one wishes to obtain a weak equivalence at the top level, then one needs to put additional conditions on the submersions $N \leftarrow P \to M$. To see which conditions are needed, we refer the reader to Theorem 1.2 in \cite{Villatoro2018}.
\end{ex}

\bibliographystyle{plain}
\bibliography{MEfolbib}

\begin{thebibliography}{10}

\bibitem{AndrSk}
Iakovos Androulidakis and Georges Skandalis.
\newblock The holonomy groupoid of a singular foliation.
\newblock {\em Reine Angew. Math.}, 626:1--37, 2009.

\bibitem{AZ1}
Iakovos {Androulidakis} and Marco {Zambon}.
\newblock {Smoothness of holonomy covers for singular foliations and essential
  isotropy.}
\newblock {\em {Math. Z.}}, 275(3-4):921--951, 2013.

\bibitem{AZ2}
Iakovos {Androulidakis} and Marco {Zambon}.
\newblock {Holonomy transformations for singular foliations.}
\newblock {\em {Adv. Math.}}, 256:348--397, 2014.

\bibitem{tngtdiffeo}
J.~Daniel Christensen and Enxin Wu.
\newblock Tangent spaces and tangent bundles for diffeological spaces.
\newblock {\em Cah. Topol. G\'{e}om. Diff\'{e}r. Cat\'{e}g.}, 57(1):3--50,
  2016.

\bibitem{christensen2020exterior}
J.~Daniel Christensen and Enxin Wu.
\newblock Exterior bundles in diffeology, 2020.

\bibitem{CrFeLie}
Marius Crainic and Rui~Loja Fernandes.
\newblock Integrability of {L}ie brackets.
\newblock {\em Ann. of Math. (2)}, 157(2):575--620, 2003.

\bibitem{LecturesIntegrabilty}
Marius Crainic and Rui~Loja Fernandes.
\newblock Lectures on integrability of {L}ie brackets.
\newblock In {\em Lectures on {P}oisson geometry}, volume~17 of {\em Geom.
  Topol. Monogr.}, pages 1--107. Geom. Topol. Publ., Coventry, 2011.

\bibitem{DEB1}
Claire Debord.
\newblock Holonomy groupoids of singular foliations.
\newblock {\em J. Differential Geom.}, 58(3):467--500, 2001.

\bibitem{Debord2013}
Claire Debord.
\newblock Longitudinal smoothness of the holonomy groupoid.
\newblock {\em C. R. Math. Acad. Sci. Paris}, 351(15-16):613--616, 2013.

\bibitem{AutOri}
Alfonso Garmendia and Ori Yudilevich.
\newblock On the inner automorphisms of a singular foliation.
\newblock {\em Math. Z.}, Nov 2018.
\newblock https://doi.org/10.1007/s00209-018-2212-0.

\bibitem{ME2018}
Alfonso Garmendia and Marco Zambon.
\newblock Hausdorff morita equivalence of singular foliations.
\newblock {\em Ann. Global. Anal. Geom.}, 55(1):99--132, Feb 2019.

\bibitem{lie2grps}
Alfonso Garmendia and Marco Zambon.
\newblock Quotients of singular foliations and {L}ie 2-group actions, 2019.
\newblock arXiv:1904.08890.

\bibitem{Hermann}
Robert Hermann.
\newblock The differential geometry of foliations. {II}.
\newblock {\em J. Math. Mech.}, 11:303--315, 1962.

\bibitem{hm1993}
Philip~J. Higgins and Kirill C.~H. Mackenzie.
\newblock Duality for base-changing morphisms of vector bundles, modules, {L}ie
  algebroids and poisson structures.
\newblock {\em Math. Proc. Cambridge Philos. Soc.}, 114(3):471–488, 1993.

\bibitem{Hirsch}
Morris~W. Hirsch.
\newblock {\em Differential topology}, volume~33 of {\em Grad. Texts Math.}
\newblock Springer-Verlag, New York, 1994.
\newblock Corrected reprint of the 1976 original.

\bibitem{Diffeology}
Patrick Iglesias-Zemmour.
\newblock {\em Diffeology}, volume 185 of {\em Math. Surveys Monog.}
\newblock American Mathematical Society, Providence, RI, 2013.

\bibitem{SylvainArticle}
Camille Laurent-Gengoux, Sylvain Lavau, and Thomas Strobl.
\newblock The universal {L}ie $\infty$-algebroid of a singular foliation.
\newblock {\em arXiv:1806.00475}, 2018.

\bibitem{LR19}
Camille Laurent-Gengoux and Leonid Ryvkin.
\newblock The holonomy of a singular leaf, 2019.
\newblock arXiv:1912.05286.

\bibitem{MK2}
Kirill C.~H. Mackenzie.
\newblock {\em General theory of {L}ie groupoids and {L}ie algebroids}, volume
  213 of {\em London Math. Soc. Lecture Note Ser.}
\newblock Cambridge University Press, Cambridge, 2005.

\bibitem{SouriauDiff1}
J.-M. Souriau.
\newblock Groupes diff'{e}rentiels.
\newblock In {\em Differential geometrical methods in mathematical physics
  ({P}roc. {C}onf., {A}ix-en-{P}rovence/{S}alamanca, 1979)}, volume 836 of {\em
  Lecture Notes in Math.}, pages 91--128. Springer, Berlin-New York, 1980.

\bibitem{SouriauDiff2}
J.-M. Souriau.
\newblock Groupes diff'{e}rentiels de physique math'{e}matique.
\newblock In {\em South {R}hone seminar on geometry, {II} ({L}yon, 1983)},
  Travaux en Cours, pages 73--119. Hermann, Paris, 1984.

\bibitem{Stefan}
P.~Stefan.
\newblock Accessible sets, orbits, and foliations with singularities.
\newblock {\em Proc. London Math. Soc. (3)}, 29:699--713, 1974.

\bibitem{Sussmann}
H{{\'e}}ctor~J. Sussmann.
\newblock Orbits of families of vector fields and integrability of
  distributions.
\newblock {\em Trans. Amer. Math. Soc.}, 180:171--188, 1973.

\bibitem{Villatoro2018}
Joel Villatoro.
\newblock Poisson manifolds and their associated stacks.
\newblock {\em Lett. Math. Phys.}, 108(3):897--926, Mar 2018.

\bibitem{RoyThesis}
Roy Wang.
\newblock {\em {On Integrable Systems Rigidity for PDEs with Symmetry}}.
\newblock PhD thesis, Utrecht University, 12 2017.

\bibitem{SingSub}
Marco Zambon.
\newblock Singular subalgebroids.
\newblock {\em arXiv:1805.02480}, 05 2018.

\end{thebibliography}

\end{document}